\RequirePackage{fix-cm}

\documentclass{amsart}

\usepackage{graphicx}

\usepackage{amsmath}
\usepackage{amsfonts}
\usepackage{amssymb}
\usepackage{wasysym}
\usepackage{enumerate}
\usepackage{longtable}
\usepackage{pgf,tikz}
\usepackage{cite, url}
\usepackage{extarrows}

\usetikzlibrary{arrows}

\theoremstyle{plain}
\newtheorem{theor}{Theorem}

\newtheorem{lemm}[theor]{Lemma}
\newtheorem{kor}[theor]{Corollary}
\theoremstyle{definition}
\newtheorem{definition}[theor]{Definition}

\newcommand \id {\ensuremath\mathrm{id}} 

\newcommand\blfootnote[1]{%
  \begingroup
  \renewcommand\thefootnote{}\footnote{#1}%
  \addtocounter{footnote}{-1}%
  \endgroup
}

\begin{document}

\title{Definability in the embeddability ordering of finite directed graphs, II }

\author{\'{A}d\'{a}m Kunos}

\maketitle

\begin{abstract}
We deal with first-order definability in the embeddability ordering $( \mathcal{D}; \leq)$ of finite directed graphs. 
A directed graph $G\in \mathcal{D}$ is said to be embeddable into $G' \in \mathcal{D}$ if there exists an injective graph homomorphism $\varphi \colon G \to G'$.
We describe the first-order definable relations of $( \mathcal{D}; \leq)$ using the first-order language of an enriched small category of digraphs. 
The description yields the main result of the author's paper \cite{Kunos2015} as a corrolary and a lot more.
For example, the set of weakly connected digraphs turns out to be first-order definable in $(\mathcal{D}; \leq)$. 
Moreover, if we allow the usage of a constant, a particular digraph $A$, in our first-order formulas,  then the full second-order language of digraphs becomes available.
\end{abstract}

\section{Introduction}

\blfootnote{In the beginning, this research was supported by T\'{A}MOP 4.2.4. A/2-11-1-2012-0001 ``National Excellence Program---Elaborating and operating an inland student and researcher personal support system''. This project was subsidized by the European Union and co-financed by the European Social Fund. Later, the author was supported by OTKA grant K115518.}

In 2009--2010 J. Je\v{z}ek and R. McKenzie published a series of papers 
\cite{Jezek2009_1, Jezek2010, Jezek2009_3, Jezek2009_4} 
in which they have examined (among other things) the first-order definability in the substructure orderings of finite mathematical structures with a given type and determined the automorphism group of these orderings. 
They considered finite semilattices \cite{Jezek2009_1}, ordered sets \cite{Jezek2010}, distributive lattices \cite{Jezek2009_3} and lattices \cite{Jezek2009_4}. 
Similar investigations \cite{Kunos2015, Wires2016, Ramanujam2016, Thinniyam2017} have emerged since. 
The current paper is one of such, a continuation of the author's paper \cite{Kunos2015} that dealt with the embeddability ordering of finite directed graphs. 
That whole paper centers around one main theorem. 
In the current paper we extend this theorem significantly.

Let us consider a nonempty set $V$ and a binary relation $E\subseteq V^2$. We call the pair $G=(V,E)$ a {\it directed graph} or just {\it digraph}.
The elements of $V(=V(G))$\label{qwqepwrpdg} and $E(=E(G))$\label{qweoweru3} are called the{ \it vertices} and {\it edges} of $G$, respectively.
The directed graph $G^T:=(V,E^{-1})$\label{aqweiqportw8} is called the {\it transpose} of $G$, where $E^{-1}$ denotes the inverse relation of $E$.
A digraph $G$ is said to be embeddable into $G'$, and we write $G\leq G'$, if there exists an injective homomorphism $\varphi : G \to G'$. 
Let $\mathcal{D}$\label{xcvshgw8} denote the set of isomorphism types of finite digraphs. 
It is easy to see that $\leq$ is a partial order on $\mathcal{D}$.

Let $(\mathcal{A},\leq)$ be an arbitrary poset.
An $n$-ary relation $R$ is said to be (first-order) definable in $(\mathcal{A},\leq)$ if there exists a first-order formula
$\Psi(x_1,x_2,\dots, x_n)$ with free variables $x_1,x_2,\dots, x_n$ in the language of partially ordered sets such that for any  $a_1,a_2,\dots, a_n\in \mathcal{A}$,
$\Psi(a_1,a_2,\dots, a_n)$  holds in $(\mathcal{A},\leq)$ if and only if $(a_1,a_2,\dots, a_n)\in R$.
A subset of $\mathcal{A}$ is definable if it is definable as a unary relation. An element $a\in \mathcal{A}$ is said to be definable if
the set $\{a\}$ is definable.
In the poset $(\mathcal{D},\leq)$ let $G\prec G'$ denote that $G'$ covers $G$. Obviously $\prec$ is a definable relation in $(\mathcal{D},\leq)$.
In \cite{Kunos2015}, the main result is
\begin{theor}[Theorem 2.38 \cite{Kunos2015}]\label{vxmknfsj}
 In the poset $(\mathcal{D};\leq)$, the set $\{G,G^T\}$ is first-order definable for all finite digraph $G\in \mathcal{D}$. 
 \end{theor}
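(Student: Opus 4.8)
The first point is that $G\mapsto G^T$ is an automorphism of $(\mathcal{D};\leq)$: an injective homomorphism $\varphi\colon G\to G'$ is at the same time an injective homomorphism $G^T\to (G')^T$, and $T$ is an involution. Consequently no formula in the language of posets can tell $G$ and $G^T$ apart, so $\{G,G^T\}$ is the smallest set containing $G$ that could possibly be definable. The plan is to show that it really is definable, by induction on the size of $G$: one first makes a few tiny digraphs and a size function definable, and then climbs up using the (definable) cover relation $\prec$.

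I would begin at the bottom. The one-vertex edgeless digraph $O$ is the unique minimal element, hence definable, and the two atoms are the loop $L$ and the two-vertex edgeless digraph $N_2$. These two are separated inside $(\mathcal{D};\leq)$ by counting upper covers: $L$ has exactly one (adjoin an isolated vertex), while $N_2$ has three (adjoin an isolated vertex, a loop, or an arc), so $N_2$ is the unique atom with more than one upper cover. Thus $O$, $L$, $N_2$ become individually definable.

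The next goal is a definable notion of size. A lemma worth isolating is that every interval $[O,G]$ is graded, each covering step adjoining either a single vertex or a single arc/loop; granting this, the common length of the maximal chains from $O$ to $G$ is $\rho(G)=|V(G)|+|E(G)|-1$, an order-definable quantity. In parallel one defines the edgeless digraphs $O=N_1\prec N_2\prec N_3\prec\cdots$ by induction: among the upper covers of $N_n$, the digraph $N_{n+1}$ is pinned down by order data such as how many lower covers it has and the $\rho$-values and cover counts of its neighbours. Since $N_k\leq X$ holds precisely when $|V(X)|\geq k$, for each fixed $k$ the property ``$X$ has exactly $k$ vertices'' is then expressed by the formula $\psi_k(X)\colon N_k\leq X\wedge\lnot(N_{k+1}\leq X)$, and together with $\rho$ this also makes ``$X$ has exactly $m$ edges'' definable for each fixed $m$.

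Finally, fix $G$ with $n$ vertices. Relativising the order to $\psi_n$ turns $\leq$ into the spanning-subgraph order on the $n$-vertex digraphs, so it suffices to define $\{G,G^T\}$ inside that sub-order; I would do this by a secondary induction on the number of edges, separating any two non-isomorphic digraphs that are not transposes of one another by order-invariants such as numbers of upper and lower covers, $\rho$-values, and cardinalities of small intervals, the transpose ambiguity being invisible to all of these. The combination of $\psi_n$ with such a separating formula then defines $\{G,G^T\}$. I expect the genuine difficulty to lie in this last stage, together with the gradedness lemma and the inductive definition of the $N_n$: one must verify that the relevant counting functions are first-order expressible, and one must make the induction survive digraphs with many automorphisms, where several of the naive ``adjoin one edge'' steps coincide and finer invariants are needed.
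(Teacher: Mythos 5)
You take a route that is entirely different from the one this paper uses: here the statement is quoted from \cite{Kunos2015} and then re-obtained as a corollary of the much stronger Theorem \ref{9ahgfa}, by first making every singleton $\{G\}$ first-order definable in the enriched structure $(\mathcal{D};\leq,A)$ (Corollary \ref{allsppvnbt}) and then eliminating the constant $A$ by quantifying it over the definable set $\{A,A^T\}$ and invoking transposition invariance of $\{G,G^T\}$ (the argument of Corollary \ref{cvbhjeuiw}). Your direct induction inside the poset is instead the strategy of the original paper \cite{Kunos2015}. Its preparatory steps are correct: $E_1$ is the minimum; the two atoms $L_1$ and $E_2$ are separated by their numbers of upper covers ($1$ versus $3$); the order is graded by $|V|+|E|$, since an embedded copy of $G$ in $G'$ extends to $G'$ by adding one vertex or edge at a time and each such step strictly increases $\leq$, so a covering pair differs by exactly one; consequently the chain $E_1\prec E_2\prec\cdots$, the vertex count and the edge count are definable, much as in the present paper's definitions of $\mathcal{E}$ and of the relation \eqref{yiteja}.

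The gap is the final stage, which you flag but do not close, and which is the entire content of the theorem. You assert that any two $n$-vertex digraphs that are neither isomorphic nor transposes of each other can be separated ``by order-invariants such as numbers of upper and lower covers, $\rho$-values, and cardinalities of small intervals.'' No argument is offered, and none of the listed invariants is known to suffice. The natural induction on rank --- define $\{G,G^T\}$ from the already-defined sets $\{K,K^T\}$ with $\rho(K)<\rho(G)$ --- runs into exactly the obstruction created by the automorphism $X\mapsto X^T$: from order data relative to previously defined sets one only learns whether \emph{some} member of $\{K,K^T\}$ embeds into $X$, never which one, so one must exclude the existence of non-isomorphic, non-transpose $G,H$ of equal rank with $\{K: K\leq G \text{ or } K^T\leq G\}=\{K: K\leq H \text{ or } K^T\leq H\}$, and must do so level by level with ever finer gadgets. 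Overcoming this occupies most of \cite{Kunos2015} (the statement there is Theorem~2.38, the culmination of a long chain of technical lemmas), and avoiding that labour is precisely why the present paper instead models the category $\mathcal{CD}'$ inside $(\mathcal{D};\leq,A)$. As it stands, your sketch establishes the easy bookkeeping and leaves the theorem itself unproved.
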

This theorem is the best possible in the following sense. Observe, that $G\mapsto G^T$ is an automorphism of $(\mathcal{D};\leq)$. 
This implies that the digraphs $G$ and $G^T$ cannot be distinguished with first-order formulas of $(\mathcal{D};\leq)$.
What does Theorem \ref{vxmknfsj} tell about first-order definability in $(\mathcal{D};\leq)$? 
It tells the following 
\begin{kor}
A finite set $H$ of digraphs  is definable if and only if 
$$\forall G\in\mathcal{D}:  \;\; G\in H \Rightarrow G^T \in H.$$ 
\end{kor}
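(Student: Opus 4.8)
The plan is to prove the two implications separately, with the reverse direction resting entirely on Theorem~\ref{vxmknfsj}.

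For the forward implication I would use that a first-order definable set must be closed under every automorphism of $(\mathcal D;\leq)$. First I would check that $G\mapsto G^T$ is indeed an automorphism: it is a bijection of $\mathcal D$ with $(G^T)^T=G$, and a map $\varphi$ is an injective homomorphism $G\to G'$ if and only if it is an injective homomorphism $G^T\to G'^T$, so $G\leq G'\iff G^T\leq G'^T$. Consequently, if $\Psi(x)$ is a first-order formula in the language of posets defining $H$, then $\Psi(G)\iff\Psi(G^T)$ holds in $(\mathcal D;\leq)$ for every $G\in\mathcal D$, since automorphisms preserve the truth of first-order formulas. Hence $G\in H\iff G^T\in H$, which in particular yields $G\in H\Rightarrow G^T\in H$.

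For the reverse implication, suppose $H=\{G_1,\dots,G_k\}$ is finite with $G\in H\Rightarrow G^T\in H$. Then each pair $\{G_i,G_i^T\}$ is contained in $H$, and every element of $H$ lies in one of these pairs, so
$$H=\bigcup_{i=1}^{k}\{G_i,G_i^T\}.$$
By Theorem~\ref{vxmknfsj}, for each $i$ there is a first-order formula $\Psi_i(x)$ in the language of posets defining $\{G_i,G_i^T\}$ in $(\mathcal D;\leq)$. Then $\Psi(x):=\Psi_1(x)\vee\dots\vee\Psi_k(x)$ defines $H$, so $H$ is definable.

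There is no real obstacle here: the entire content is carried by Theorem~\ref{vxmknfsj} together with the elementary fact that the first-order definable subsets of a structure are closed under finite unions. The only points needing (routine) verification are that transposition is an automorphism of the embeddability ordering and is its own inverse, both immediate from the definitions. The corollary should be read as extracting, from the main theorem of~\cite{Kunos2015}, the exact description of which finite families of digraphs the language of $(\mathcal D;\leq)$ can pin down.
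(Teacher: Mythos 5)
Your proof is correct and matches the paper's intended argument exactly: the paper states this corollary without proof, immediately after observing that $G\mapsto G^T$ is an automorphism of $(\mathcal D;\leq)$ (giving your forward direction) and with Theorem~\ref{vxmknfsj} supplying the reverse direction via a finite disjunction, just as you do. No gaps.
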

So the first-order definability of finite subsets in $(\mathcal{D};\leq)$ is settled. What about infinite subsets? 
One might ask if the set of weakly connected digraphs is first-order definable in $(\mathcal{D};\leq)$ as a standard model-theoretic argument shows that it is not definable in the first-order language of digraphs. 
The answer to this question appears to be out of reach with the result of \cite{Kunos2015}. 
In this paper we build the apparatus to handle some of such questions. 
In doing so we follow a path laid by Je\v{z}ek and McKenzie in \cite{Jezek2010}.
In particular, the set of weakly connected digraphs turns out to be definable.

Our method is the following. We add a constant---a particular digraph that is not isomorphic to its transpose---$A$ to the structure $(\mathcal{D};\leq)$ to get $(\mathcal{D};\leq, A)$. 
We define an enriched small category $\mathcal{CD}'$ and show that its first-order language is quite strong: it contains the full second-order language of digraphs. 
Finally, we show that first-order definability in $\mathcal{CD}'$ (after factoring by isomorphism) is equivalent to first-order definability in $(\mathcal{D};\leq, A)$. 
This result gives Theorem \ref{vxmknfsj} as an easy corollary and a lot more.

The paper offers two approaches for the proof of the main theorem. We either use the result of \cite{Kunos2015}, Theorem \ref{vxmknfsj}, and do not get it as a corollary but have a more elegant proof for our main result. Or we do not use it, instead we get it as a corollary but we have a little more tiresome proof for the main result. 

Section \ref{jelolesj} consists of a table of notations to help the reader to find the definitions of the many notations used in the paper which might get frustrating otherwise.

\section{Precise formulation of the main theorem and some display of its power}\label{pakdhtgft}

Once more, we emphasize that the approach we present in this section is from Je\v{z}ek and McKenzie \cite{Jezek2010}. \\
Let $[n]$ denote the set $\{1,2,\dots,n\}$ for all $n\in\mathbb{N}$.
Let us define the small category $\mathcal{CD}$\label{qoiue2833} of finite digraphs the following way. 
The set $\text{ob}(\mathcal{CD})$\label{dfjhawkd} of objects consists of digraphs on $[n]$ for some $n\in\mathbb{N}$.
For all $A,B\in \text{ob}(\mathcal{CD})$ let $\mathrm{hom}(A,B)$\label{1382721} consist of triples $f=(A,\alpha,B)$\label{yxcmmnbcvn16235} where $\alpha: A\to B$ is a homomorphism, meaning  $(x,y)\in E(A)$ implies $(\alpha(x),\alpha(y))\in E(B)$. 
Composition of morphisms are made the following way.  
For arbitrary objects $A,B,C\in \text{ob}(\mathcal{CD})$ if $f=(A,\alpha,B)$ and $g=(B,\beta,C)$, then 
$$fg=(A,\beta\circ\alpha, C).$$
It is easy to see that $f\in \mathrm{hom}(A,B)$ is injective if and only if for all $X\in \text{ob}(\mathcal{CD})$ 
$$ \forall g,h\in \mathrm{hom}(X,A): \;\; gf=hf\Leftrightarrow g=h.$$
Similarly $f\in \mathrm{hom}(A,B)$ is surjective is and only if for all $X\in \text{ob}(\mathcal{CD})$
$$ \forall g,h\in \mathrm{hom}(B,X): \;\; fg=fh\Leftrightarrow g=h.$$
These are first-order definitions in the (first-order) language of categories, hence in $\mathcal{CD}$, isomorphism and embeddability are first-order definable. This implies that all first-order definable relations in $(\mathcal{D},\leq)$ are definable in $\mathcal{CD}$ too. 
To put it more precisely, if $\rho\subseteq \mathcal{D}^n$ is an $n$-ary relation definable in $(\mathcal{D};\leq)$ then
$$\{(A_1,\dots,A_n): A_i\in \text{ob}(\mathcal{CD}), (\bar{A_1},\dots,\bar{A_n})\in \rho\}$$
is definable in $\mathcal{CD}$, where $\bar{A_i}$ denotes the isomorphism type of $A_i$. 

\begin{definition}\label{xmcvnbsjdiqawe} Let us introduce some objects and morphisms: 
\begin{displaymath}
\begin{split}
{\bf E}_1\in \text{ob}(\mathcal{CD})&: \;\; V({\bf E}_1)=[1], \; E({\bf E}_1)=\emptyset, \\
{\bf I}_2\in \text{ob}(\mathcal{CD})&: \;\; V({\bf I}_2)=[2], \; E({\bf E}_1)=\{(1,2)\}, \\
{\bf f}_1\in \mathrm{hom}({\bf E}_1, {\bf I}_2)&: \;\; {\bf f}_1=({\bf E}_1,\{(1, 1)\} ,{\bf I}_2), \\
{\bf f}_2\in \mathrm{hom}({\bf E}_1, {\bf I}_2)&: \;\; {\bf f}_2=({\bf E}_1,\{(1, 2)\} ,{\bf I}_2). 
\end{split}
\end{displaymath}
Adding these four constants to $\mathcal{CD}$ we get $\mathcal{CD}'$. 
\end{definition}

In the first-order language of $(\mathcal{D},\leq)$, formulas can only operate with the facts whether digraphs as a whole are embeddable into each other or not, the inner structure of digraphs is (officially) unavailable. 
In the first-order language of $\mathcal{CD'}$ though, we can capture embeddability (as we have seen above) but it is possible to capture the first-order language of digraphs too. 
The latter is far from trivial, but the following argument explains it.
For any $X\in \text{ob}(\mathcal{CD})$ the set of morphisms $\mathrm{hom}({\bf E}_1, X)$ is naturally bijective with the elements of $X$. 
Observe that if $f,g\in \mathrm{hom}({\bf E}_1, X)$ are  
$$f=({\bf E}_1,\{(1, x)\}, X), \;\; g=({\bf E}_1, \{(1, y)\}, X)\;\; (x,y\in V(X)),$$
then $(x,y)\in E(X)$ holds if and only if 
\begin{equation}\label{aoosudzhac}
\exists h\in \mathrm{hom}({\bf I}_2, X): \;\; {\bf f}_1h=f, \; {\bf f}_2h=g.
\end{equation}
To put it briefly, $X\cong CD_X$, where
$$V(CD_X)=\mathrm{hom}({\bf E}_1, X), \;\; E(CD_X)=\{(f,g): f,g\in \mathrm{hom}({\bf E}_1, X), \; (\ref{aoosudzhac}) \text{ holds}\}.$$
This shows how we can reach the inner structure of digraphs with the first-order language of $\mathcal{CD'}$. So the first-order language of $\mathcal{CD'}$ is much richer than that of $(\mathcal{D},\leq)$. 
We can go even further. 
One can show that the first-order language of  $\mathcal{CD'}$ can express the full second-order language of digraphs. To formulate this more precisely, the first-order language of $\mathcal{CD'}$ can express a language containing not only variables ranging over objects and morphisms of $\mathcal{CD'}$ but also
\begin{enumerate}[(I)]
\item quantifiable variables ranging over
\begin{enumerate}[(a)]
\item elements of any object,
\item arbitrary subsets of objects,
\item arbitrary functions between two objects,
\item\label{uajjshz} arbitrary subsets of products of finitely many objects (heterogenous relations),
\end{enumerate}
\item dependent variables giving the universe and the edge relation of an object,
\item the apparatus to denote
\begin{enumerate}[(a)]
\item edge relation between elements,
\item application of a function to an element,
\item\label{jhatztsvx} membership of a tuple of elements in a relation.
\end{enumerate}
\end{enumerate}

For example, let us see how (Ib), (\ref{uajjshz}) and (\ref{jhatztsvx}) can be ``modelled'' in $\mathcal{CD'}$. \\
Let us start with (Ib). Let $E_n\in \text{ob}(\mathcal{CD}')$ denote the empty digraph on $[n]$. 
The set
$$E=\{E_n\in \text{ob}(\mathcal{CD}'): n\in \mathbb{N}\}$$
is easily definable in $\mathcal{CD}'$. 
Let $A\in \text{ob}(\mathcal{CD}')$ be an arbitrary object and $S\subseteq A$ a subset of it.
Let $\gamma$ be a bijection $V(E_{|S|})\to S$. Let us define the morphism
$$p: E_{|S|} \to A, \;\; p(x)=\gamma(x) \;\; (x\in V(E_{|S|})).$$
It is easy to see that we represented the subset $S$ with the pair $(E_{|S|}, p)$. For example, an universal quantification over the subsets of $A$ would look like 
$$(\forall E_{|S|}\in E)(\forall p\in \mathrm{hom}(E_{|S|}, A)).$$    
Next, let us consider (\ref{uajjshz}). Let $A_1,\dots, A_n\in \text{ob}(\mathcal{CD}')$ be arbitrary objects and let 
$R\subseteq A_1\times\dots \times A_n$ be nonempty. 
Let $\pi_i(r)$ be the $i$th projection of $r\in R$. 
The functions $\pi_1,\dots,\pi_n$ ``determine'' the relation $R$ in the following sense: 
$$(a_1,\dots,a_n)\in R \;\; \Leftrightarrow \;\; \exists r\in R: \pi_i(r)=a_i \;\;(i=1,\dots,n). $$
We will represent the functions $\pi_i$ the following way. 
Let $\gamma: V(E_{|R|})\to R$ be a bijection. Let us define the morphisms $p_i$: 
$$p_i: E_{|R|}\to A_i, \;\; p_i(x)=\pi_i(\gamma(x)) \;\; (x\in V(E_{|R|}))$$    
It is easy to see that we represented the relation $R$ uniquely with $(E_{|R|},p_1,\dots,p_n)$. So an example of an existential quantification of type (\ref{uajjshz}) is
$$(\exists E_{|R|}\in E)(\exists p_1\in \mathrm{hom}(E_{|R|}, A_1))\dots (\exists p_n\in \mathrm{hom}(E_{|R|}, A_n)).$$
For (\ref{jhatztsvx}), an element of $A_1\times\dots \times A_n$ is represented with an element of
\begin{equation}\label{xckfgsd}
\mathrm{hom}(E_1, A_1) \times \dots \times  \mathrm{hom}(E_1, A_n)
\end{equation}
and if $(E_{|R|},p_1,\dots,p_n)$ belongs to $R\subseteq A_1\times\dots \times A_n$ and 
$(f_1, \dots, f_n)$, an element of \eqref{xckfgsd}, belongs to $x \in A_1\times\dots \times A_n$, then $x\in R$ can be expressed in the way
$$ (\exists f \in \mathrm{hom}(E_1, E_{|R|}))(fp_1=f_1 \mathrel{\wedge} \dots \mathrel{\wedge} fp_1=f_1).$$

Let ${\bf A}\in \text{ob}(\mathcal{CD})$\label{xcvnsbndasiiwq} denote the digraph $V({\bf A})=[3]$, $E({\bf A})=\{(1,3),(2,3)\}$. 
Now from the fact that in $\mathcal{CD}'$ isomorphism and embeddabbility are definable and from Theorem \ref{vxmknfsj}, the set 
$$\{X\in \text{ob}(\mathcal{CD}): X\cong {\bf A}\text{ or }  X\cong {\bf A}^T\}$$
is definable in $\mathcal{CD}'$. From this set, the formula
$$(\exists x\in X)(\forall y\in X)(y\neq x \; \Rightarrow \; (y,x)\in E(X))$$
chooses the set 
$$\{X\in \text{ob}(\mathcal{CD}): X\cong {\bf A}\}.$$
This shows that the first order language of $\mathcal{CD'}$ is stronger then the first-order language of $(\mathcal{D, \leq)}$ because in the latter, the isomorphism type of ${\bf A}$ is not definable as it is not isomorphic to its transpose. 

\begin{definition}\label{xcmvnyxjdaskdqiwo} By adding the isomorphism type of ${\bf A}$ as a constant to $(\mathcal{D},\leq)$ we get $(\mathcal{D};\leq, A)$. Let us denote this structure by $\mathcal{D}'$.
\end{definition}

We say that the relation $\rho\subseteq (\text{ob}(\mathcal{CD}))^n$ is {\it isomorphism invariant} if when for $A_i, B_i\in \text{ob}(\mathcal{CD})$, $A_i\cong B_i$ ($1\leq i\leq n$), then
$$(A_1,\dots,A_n)\in \rho \; \Leftrightarrow \; (B_1,\dots,B_n)\in \rho.$$
The set of isomorphism invariant relations of $\text{ob}(\mathcal{CD})$ is naturally bijective with the relations of $\mathcal{D}$. 
The main result of the paper is the following

\begin{theor}\label{9ahgfa} A relation is first-order definable in $\mathcal{D}'$ if and only if the corresponding isomorphism invariant relation of $\mathcal{CD}'$ is first-order definable in $\mathcal{CD}'$.
\end{theor}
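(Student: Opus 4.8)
The plan is to prove the equivalence by establishing mutual interpretability in a sufficiently uniform way. One direction is essentially already sketched in the excerpt: given a first-order formula $\Psi(x_1,\dots,x_n)$ in the language of $\mathcal{D}'=(\mathcal{D};\leq,A)$, one rewrites it into a formula $\Psi^*$ in the language of $\mathcal{CD}'$ by replacing quantifiers over $\mathcal{D}$ with quantifiers over $\text{ob}(\mathcal{CD})$, replacing the order $\leq$ by the (already shown) first-order definable embeddability predicate on objects, replacing the constant $A$ by the first-order definable isomorphism class $\{X : X\cong {\bf A}\}$ (definable because, as the excerpt argues, $\{X : X\cong{\bf A}\text{ or }X\cong{\bf A}^T\}$ is definable via Theorem \ref{vxmknfsj}, and ${\bf A}$ is then singled out by the formula $(\exists x\in X)(\forall y\in X)(y\neq x\Rightarrow (y,x)\in E(X))$ using the modelling apparatus). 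Then $\Psi^*$ defines precisely the isomorphism invariant relation of $\mathcal{CD}'$ corresponding to the relation defined by $\Psi$ in $\mathcal{D}'$. This direction should be routine once the translation table (I)--(III) from the excerpt is in place; I would organize it as: (i) $\leq$ is first-order definable in $\mathcal{CD}'$; (ii) the iso class of ${\bf A}$ is first-order definable in $\mathcal{CD}'$; (iii) induct on the structure of $\Psi$.

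The substantive direction is the converse: if $\Phi$ is a first-order formula of $\mathcal{CD}'$ defining an isomorphism invariant relation $\rho$ on $\text{ob}(\mathcal{CD})$, then the corresponding relation on $\mathcal{D}$ is first-order definable in $\mathcal{D}'$. The key is to interpret the whole enriched category $\mathcal{CD}'$ inside $\mathcal{D}'$, i.e. to exhibit, using only the order $\leq$ and the constant $A$, first-order definable surrogates for: objects of $\mathcal{CD}$ (up to isomorphism, which is exactly the elements of $\mathcal{D}$), morphisms, the domain/codomain maps, composition, and the four constants ${\bf E}_1,{\bf I}_2,{\bf f}_1,{\bf f}_2$. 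Objects are no problem: an object up to isomorphism is just an element of $\mathcal{D}$. The heart of the matter is encoding morphisms $f=(A,\alpha,B)$ and the composition operation purely order-theoretically, with the help of $A$. Here is where I would lean on the machinery of \cite{Kunos2015} (or redevelop it, in the second, self-contained route the author mentions): a homomorphism $\alpha\colon A\to B$ can be coded by a single digraph $M_\alpha\in\mathcal{D}$ --- for instance a suitable ``mapping cylinder'' or graph of $\alpha$ built on $V(A)\sqcup V(B)$ with extra marker edges --- together with order-theoretic conditions asserting that $M_\alpha$ has the right shape (that it restricts to $A$ on one part, to $B$ on the other, and that the added structure is the graph of a homomorphism). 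The constant $A$ (and definability of small digraphs via Theorem \ref{vxmknfsj}/its corollary, or directly) is what lets us pin down these markers unambiguously up to transpose, and then break the transpose ambiguity. Once morphisms and composition are first-order definable in $\mathcal{D}'$, one translates $\Phi$ clause by clause into $\mathcal{D}'$, and isomorphism invariance of $\rho$ guarantees the translated formula is well-defined on $\mathcal{D}$ rather than merely on representatives.

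I expect the main obstacle to be exactly the faithful first-order coding of morphisms and their composition in $(\mathcal{D};\leq,A)$: the embeddability order remembers very little, so one must design gadget digraphs whose embeddability relationships to a fixed stock of definable ``template'' digraphs force them to encode a homomorphism and nothing more, and then verify that composition of encoded morphisms is again first-order expressible (this typically requires a pullback-style construction recognized purely by covers and joins/meets in the order). A secondary subtlety is handling the transpose automorphism $G\mapsto G^T$ throughout: since $\mathcal{D}'$ cannot distinguish $G$ from $G^T$, every gadget must be chosen asymmetric enough, using the asymmetry of $A=(\{1,2,3\},\{(1,3),(2,3)\})$, so that the coding survives; this is presumably why the constant $A$ is needed and why the paper emphasizes it is not isomorphic to its transpose. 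I would therefore structure the proof as: Section on coding objects (trivial); Section on coding morphisms via marked digraphs and proving the coding predicate is first-order in $\mathcal{D}'$; Section on coding composition; Section on the translation of formulas and the induction, concluding in both directions. The bulk of the real work, and the place where I would either cite \cite{Kunos2015} heavily or reprove its technical lemmas, is the morphism-coding section.
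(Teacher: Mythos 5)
Your architecture matches the paper's: the easy direction is the translation you describe, and the hard direction is proved by interpreting $\mathcal{CD}'$ inside $\mathcal{D}'$ via gadget digraphs recognized order-theoretically. But the proposal stops exactly where the actual work begins, and the one concrete suggestion you make is the wrong gadget. First, ``objects are no problem'' is not right: an object of $\mathcal{CD}$ is a digraph with \emph{named} vertices, and $\mathrm{hom}(A,B)$ depends on those names, so an element of $\mathcal{D}$ (an isomorphism type) cannot serve as an object. The paper's object-surrogate is $G\overset{\underline{v}}{\leftarrow} O_n^*$: each vertex of $G$ gets a pendant circle of a distinct large size $O_{n+1},\dots,O_{2n}$ as a label, and a morphism $\alpha\colon[n]\to[m]$ is coded not by a mapping cylinder on $V(A)\sqcup V(B)$ but by a separate digraph $F_\alpha(n,m)$ built only out of circles and looped circles joined by edges recording the graph of $\alpha$; composition is then read off from which $O_{i\to j}$ embed. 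The reason for this design is that embeddability can ``see'' circle sizes ($O_k\leq O_l$ only for $k=l$ among circles), which is what makes the labels recoverable from $\leq$ alone.

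Second, the genuinely hard step --- which your plan does not anticipate and which no ``mapping cylinder'' sketch addresses --- is proving that the pair $(G,\,G\mathrel{\dot{\cup}} O_n^*)$ is a definable relation: one must characterize, purely by embeddability, when a digraph is \emph{exactly} $G$ together with the prescribed circles and nothing else. Naive conditions ($G\leq Y$, $Y\equiv G\mathrel{\dot{\cup}}O_n^*$, equal loop counts) fail, as the paper's Figure 5 counterexample shows; the fix is the loop-adding criterion of Lemma 19 (the most difficult argument in the paper), which feeds into Lemma 20. Without something playing this role, your coding predicate ``$M_\alpha$ has the right shape'' is not first-order in $(\mathcal{D};\leq,A)$, and the interpretation does not get off the ground. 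Note also that \cite{Kunos2015} supplies only the definability of $\{G,G^T\}$ (used just to pin down finitely many small template digraphs, and even that is avoidable); the morphism-coding machinery is not there to be cited and must be built from scratch.
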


We have already seen the proof of the easy(=only if) direction  of this theorem.
We prove the difficult direction in Section \ref{aspioerfdsc} by creating a model of $\mathcal{CD}'$ in $\mathcal{D}'$.

\begin{definition} A relation $R\subseteq \text{ob}(\mathcal{CD})^n$ is called {\it transposition invariant} if it is isomorphism invariant and $(G_1, \dots, G_n)\in R$ implies $(G_1^T,\dots, G_n^T)\in R.$
\end{definition}

\begin{kor}\label{cvbhjeuiw} A relation is first-order definable in $\mathcal{D}$ if and only if the corresponding isomorphism invariant relation of $\mathcal{CD}'$ is transposition invariant and first-order definable in $\mathcal{CD}'$.
\end{kor}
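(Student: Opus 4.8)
The plan is a short deduction from Theorem \ref{9ahgfa} and Theorem \ref{vxmknfsj}: beyond the equivalence furnished by Theorem \ref{9ahgfa}, the only additional feature that distinguishes definability in $\mathcal{D}'$ from definability in $\mathcal{D}$ is the transposition invariance forced by the fact that $G\mapsto G^T$ is an automorphism of $(\mathcal{D};\leq)$.

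For the ``only if'' direction, suppose $\rho\subseteq\mathcal{D}^n$ is first-order definable in $\mathcal{D}=(\mathcal{D};\leq)$. Since $\mathcal{D}$ is a reduct of $\mathcal{D}'$ (we merely drop the constant $A$), $\rho$ is a fortiori first-order definable in $\mathcal{D}'$, and Theorem \ref{9ahgfa} then yields that the corresponding isomorphism invariant relation $R$ of $\mathcal{CD}'$ is first-order definable in $\mathcal{CD}'$. To see that $R$ is transposition invariant, take any poset-language formula $\Psi$ defining $\rho$ in $(\mathcal{D};\leq)$; because $G\mapsto G^T$ is an automorphism of $(\mathcal{D};\leq)$, $\Psi(G_1,\dots,G_n)$ holds iff $\Psi(G_1^T,\dots,G_n^T)$ holds, so $\rho$ is closed under coordinatewise transposition, hence so is $R$.

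For the ``if'' direction, suppose $R$ is transposition invariant and first-order definable in $\mathcal{CD}'$, and let $\rho$ be the corresponding relation on $\mathcal{D}$. By Theorem \ref{9ahgfa}, $\rho$ is first-order definable in $\mathcal{D}'=(\mathcal{D};\leq,A)$, say by a formula $\Psi(x_1,\dots,x_n,A)$ in the language of posets with the constant $A$. The task is to eliminate $A$. Here I would invoke Theorem \ref{vxmknfsj}: applied to the isomorphism type $A$, it provides a formula $\Phi(y)$ in the bare language of posets that defines the set $\{A,A^T\}$. I would then set
$$\Psi'(x_1,\dots,x_n)\;:=\;\exists y\,\bigl(\Phi(y)\wedge\Psi(x_1,\dots,x_n,y)\bigr),$$
a formula in the plain language of posets. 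For $G_1,\dots,G_n\in\mathcal{D}$, the statement $\Psi'(G_1,\dots,G_n)$ holds exactly when $\Psi(G_1,\dots,G_n,A)$ or $\Psi(G_1,\dots,G_n,A^T)$ holds; the first disjunct is equivalent to $(G_1,\dots,G_n)\in\rho$, while the second, applying the automorphism $G\mapsto G^T$, is equivalent to $(G_1^T,\dots,G_n^T)\in\rho$, which by the transposition invariance of $\rho$ (inherited from that of $R$) is again equivalent to $(G_1,\dots,G_n)\in\rho$. Hence $\Psi'$ defines $\rho$ in $\mathcal{D}$.

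The only delicate point --- and the step where I expect a reader to pause --- is this last one: Theorem \ref{vxmknfsj} can define only the unordered pair $\{A,A^T\}$ and not $A$ itself (indeed $A\not\cong A^T$), so the quantifier $\exists y$ also admits the ``wrong'' witness $y=A^T$, and one must check that this wrong witness yields the same truth value. Transposition invariance of $\rho$ is precisely what guarantees this, and it is also exactly the constraint that was extracted in the ``only if'' direction, so the two halves fit together without any further analysis of $\mathcal{CD}'$.
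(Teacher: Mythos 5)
Your proof is correct and follows essentially the same route as the paper: apply Theorem \ref{9ahgfa}, replace the constant $A$ by an existentially quantified $y$ ranging over the definable set $\{A,A^T\}$, and use transposition invariance of $\rho$ to absorb the ``wrong'' witness $y=A^T$. The one small divergence is that you invoke Theorem \ref{vxmknfsj} to define $\{A,A^T\}$, whereas the paper deliberately remarks that this pair is definable without it (it is a fixed $3$-vertex digraph, so a direct bottom-of-the-poset argument suffices), precisely so that the same argument can later yield Theorem \ref{vxmknfsj} as a corollary of Theorem \ref{9ahgfa} without circularity.
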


\begin{proof} The ``only if'' direction is obvious. For the ``if'' direction, let $R\subseteq \mathcal{D}^n$ be a relation that corresponds to a transposition invariant and first-order definable relation of $\mathcal{CD}'$. 
We need to show that $R$ is first-order definable in $\mathcal{D}$. 
We know, by Theorem \ref{9ahgfa}, that it is first-order definable in $\mathcal{D}'$. 
Let $\Phi(x_1, \dots, x_n)$ be a formula that defines it. 
Let $\Phi'(y,x_1, \dots, x_n)$ denote the formula that we get from $\Phi(x_1, \dots, x_n)$ by replacing the constant $A$ with $y$ at all of its occurrences. 
The set $\{A, A^T\}$ is easily defined (even without the usage of Theorem \ref{vxmknfsj}) in $\mathcal{D}$. 
Let us define
$$\Phi''(x_1, \dots, x_n):=\exists y (y\in\{A, A^T\} \wedge \Phi'(y,x_1, \dots, x_n)).$$
We claim that for $S:=\{(x_1, \dots, x_n) :\Phi''(x_1, \dots, x_n)\}$, $S=R$ holds. 
$R\subseteq S$ is clear as $\Phi'(A,x_1, \dots, x_n)$ defines $R$. Let $s\in S$. 
If this particular tuple $s$ is defined with $y=A$ in $\Phi''$ then $s\in R$ is obvious. 
If $s$ is defined with $y=A^T$ then $s^T$ can be defined with $y=A$ in $\Phi''$ and this yields $s^T\in R$, where the transpose is taken componentwise. Finally, the transposition invariance of $R$ implies $s\in R$.
 \end{proof}


We have already seen that in the first-order language of $\mathcal{CD}'$ we have access to the first-order language of digraphs. 
Let $G=(V,E)$ be an arbitrary fixed digraph with $V=\{v_1, \dots, v_n\}$. Then the formula 
\begin{multline}\exists x_1 \dots \exists x_n \forall y \bigg(
\bigwedge_{1\leq i \neq j \leq n }x_i \neq x_j \;\; \wedge 
\bigvee_{i=1}^n y=x_i \;\; \wedge \\ \nonumber
\bigwedge_{(v_i,v_j)\in E}(x_i, x_j)\in E  \;\; \wedge 
\bigwedge_{(v_i,v_j)\notin E} (x_i, x_j)\notin E  \bigg)
\end{multline}
defines $G$ in the first-order language of digraphs.
This leads to the following corollary of Theorem \ref{9ahgfa}.

\begin{kor}\label{allsppvnbt} In $\mathcal{D}'$, all elements are first-order definable. \qed
\end{kor}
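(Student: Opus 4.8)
The plan is to reduce the statement, via Theorem \ref{9ahgfa}, to a definability question in $\mathcal{CD}'$, and then to settle that question using the displayed formula above together with the interpretation of the first-order language of digraphs inside $\mathcal{CD}'$ that was set up in Section \ref{pakdhtgft}.

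First I would fix an arbitrary finite digraph $G=(V,E)$ with $V=\{v_1,\dots,v_n\}$ and let $\sigma_G$ denote the displayed sentence, which lies in the first-order language of digraphs and satisfies: for every digraph $D$, $D\models\sigma_G$ if and only if $D\cong G$. Next I would recall that for any object $X\in\text{ob}(\mathcal{CD})$ we have $X\cong CD_X$, and that the first-order language of digraphs, interpreted on $CD_X$, is expressible in the first-order language of $\mathcal{CD}'$: the sentence $\sigma_G$ uses only quantification over the elements of a single object (type (Ia)), the edge relation between elements (type (IIIa)), and equality of elements, all of which were shown to be available in $\mathcal{CD}'$. This produces a first-order formula $\psi_G(X)$ in the language of $\mathcal{CD}'$, with one free object variable $X$, such that $\mathcal{CD}'\models\psi_G(X)$ if and only if $X\cong G$.

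Thus the unary relation $\{X\in\text{ob}(\mathcal{CD}):X\cong G\}$ is first-order definable in $\mathcal{CD}'$, and it is evidently isomorphism invariant; the isomorphism-invariant relation of $\mathcal{CD}'$ corresponding to it is exactly the singleton $\{\bar G\}\subseteq\mathcal{D}$. Theorem \ref{9ahgfa} then yields that $\{\bar G\}$ is first-order definable in $\mathcal{D}'$, i.e. $G$ is a definable element of $\mathcal{D}'$. Since $G$ was arbitrary, all elements of $\mathcal{D}'$ are first-order definable.

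I do not expect a serious obstacle here: the only point requiring care is the passage from the worked examples (Ib), (\ref{uajjshz}), (\ref{jhatztsvx}) in Section \ref{pakdhtgft} to the general claim that an arbitrary sentence of the first-order language of digraphs, interpreted on a variable object $X$, translates into a first-order formula of $\mathcal{CD}'$ with $X$ free — but this is merely a fragment of the already-asserted expressibility of the full second-order language of digraphs in $\mathcal{CD}'$, so it is routine. Everything else is a direct invocation of Theorem \ref{9ahgfa}.
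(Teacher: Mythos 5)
Your proposal is correct and is essentially the paper's own argument: the paper likewise observes that the displayed sentence defines the isomorphism type of $G$ in the first-order language of digraphs, which has been shown to be expressible in the first-order language of $\mathcal{CD}'$, and then invokes Theorem \ref{9ahgfa} to transfer definability of the corresponding isomorphism-invariant singleton to $\mathcal{D}'$. There is no substantive difference between your route and the paper's.
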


\begin{kor}[=Theorem \ref{vxmknfsj}]\label{vncxbfui8} For all $G\in \mathcal{D}$, the set $\{G,G^T\}$ is first-order definable in $(\mathcal{D}, \leq)$. 
\end{kor}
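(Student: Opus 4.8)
The plan is to read this off from Corollary~\ref{cvbhjeuiw}. Fix a finite digraph $G\in\mathcal{D}$ and regard $H:=\{G,G^T\}$ as a unary relation on $\mathcal{D}$. The isomorphism invariant relation of $\mathcal{CD}'$ corresponding to $H$ is
\[
\rho:=\{X\in\text{ob}(\mathcal{CD}): X\cong G\ \text{or}\ X\cong G^T\},
\]
so, by Corollary~\ref{cvbhjeuiw}, it suffices to check that $\rho$ is transposition invariant and first-order definable in $\mathcal{CD}'$.

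Transposition invariance of $\rho$ is immediate: $\rho$ is isomorphism invariant by construction, and if $X\in\rho$ then $X\cong G$ or $X\cong G^T$, whence $X^T\cong G^T$ or $X^T\cong G^{TT}=G$, i.e.\ $X^T\in\rho$. For definability, recall that the first-order language of $\mathcal{CD}'$ interprets the first-order language of digraphs relative to an object: one may quantify over the elements of an object $X$ and refer to its edge relation, as explained around items (Ia) and (IIIa) above. Hence, with $V(G)=\{v_1,\dots,v_n\}$, the digraph sentence displayed just before Corollary~\ref{allsppvnbt} can be transcribed into a first-order formula $\psi_G(X)$ over $\mathcal{CD}'$ that holds exactly when $X\cong G$; applying the same construction to $G^T$ gives $\psi_{G^T}(X)$. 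Then $\psi_G(X)\vee\psi_{G^T}(X)$ defines $\rho$ in $\mathcal{CD}'$, and Corollary~\ref{cvbhjeuiw} yields that $H=\{G,G^T\}$ is first-order definable in $(\mathcal{D};\leq)$.

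The same argument can be spelled out concretely without naming Corollary~\ref{cvbhjeuiw}: by Corollary~\ref{allsppvnbt}, $G$ is defined in $\mathcal{D}'$ by some formula $\chi(x)$; letting $\chi'(y,x)$ be obtained from $\chi$ by replacing the constant $A$ throughout by the variable $y$, the formula $\exists y\,(y\in\{A,A^T\}\wedge\chi'(y,x))$ defines $\{G,G^T\}$ in $\mathcal{D}$, using that $\{A,A^T\}$ is already definable in $(\mathcal{D};\leq)$ and that $X\mapsto X^T$ is an automorphism. I do not anticipate any real obstacle in this corollary: essentially all of the difficulty sits in Theorem~\ref{9ahgfa} (whose hard direction is to be established in Section~\ref{aspioerfdsc} by building a copy of $\mathcal{CD}'$ inside $\mathcal{D}'$) and in the already-indicated interpretation of digraph logic in $\mathcal{CD}'$; once those are available, the present statement reduces to transcribing one explicit digraph sentence and verifying an obvious invariance.
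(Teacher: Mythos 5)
Your proposal is correct and takes essentially the same route as the paper: the paper's proof of this corollary is a one-line reference to the argument in the proof of Corollary~\ref{cvbhjeuiw} (replace the constant $A$ by a variable $y$ in a defining formula over $\mathcal{D}'$ and quantify over the definable set $\{A,A^T\}$), which is exactly your spelled-out second version, and your first version is just the same argument packaged as a direct application of Corollary~\ref{cvbhjeuiw}.
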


\begin{proof} The proof goes with basically the same argument as we have seen in the proof of Corollary \ref{cvbhjeuiw}.
 \end{proof}

The previous two statements will only earn the ``title'' corollary truly, if we prove Theorem \ref{9ahgfa} without using them, which will be one way to approach the proof of Theorem \ref{9ahgfa}.

In the second-order language of digraphs---which has turned out to be available in the first-order language of $\mathcal{CD'}$---the formula
$$\exists H\subseteq G(\exists v,w\in G(v\in H \wedge w\notin H)\; \wedge \; \forall x, y \in G(x\to y \Rightarrow (x,y\in H \; \vee\; x,y \notin H))) $$
defines the set of not weakly connected digraphs. This means that the set of weakly connected digraphs is first-order definable in $\mathcal{D}$, by Corollary \ref{cvbhjeuiw}. That fact seems quite nontrivial to prove without Theorem \ref{9ahgfa}. This definability is surprising as the set of weakly connected digraphs is not definable in the first-order language of digraphs (by a standard model-theoretic argument). 


\section{Some notations and definitions needed from \cite{Kunos2015}}

In this section we recall additional notations and definitions from \cite{Kunos2015} that will be needed.

\begin{definition}\label{349587897819} For digraphs $G, G'\in \mathcal{D}$, let $G\mathrel{\dot{\cup}} G'$ denote their disjoint union, as usual.
\end{definition}

\begin{definition}\label{defE_n} Let ${E_n}$ $(n=1,2,\dots )$ denote the ``empty'' digraph with $n$ vertices and ${F_n}$ $(n=1,2,\dots )$ denote the ``full'' digraph with $n$ vertices: $$V(E_n)=\{v_1,v_2,\dots, v_n\},\;\;  E(E_n)=\emptyset,$$
$$V(F_n)=\{v_1,v_2,\dots, v_n\}, \;\; E(F_n)=V(F_n)^2.$$
\end{definition}

\begin{definition}\label{defIOL}
Let $I_n$, $O_n$, $L_n$ $(n=2,3,\dots)$ be the following (fig. \ref{O_n abra}.) digraphs: 
\begin{displaymath}
V(I_n)=V(O_n)=V(L_n)=\{v_1,v_2,\dots, v_n\}, 
\end{displaymath}
\begin{displaymath}
E(I_n)=\{(v_1,v_2),(v_2,v_3),\dots, (v_{n-1}, v_n) \},
\end{displaymath}
\begin{displaymath}
E(O_n)=\{(v_1,v_2),(v_2,v_3),\dots, (v_{n-1}, v_n), (v_n, v_1)\},
\end{displaymath}
\begin{displaymath}
E(L_n)=\{(v_1,v_1),(v_2,v_2),\dots, (v_n, v_n)\}.
\end{displaymath}
\end{definition}

\begin{figure}[h]
\begin{center}
\begin{tikzpicture}[line cap=round,line join=round,>=triangle 45,x=1.0cm,y=1.0cm]
\clip(2.22,0.82) rectangle (13.1,5.88);
\draw [->] (3.38,1.82) -- (3.4,2.68);
\draw [->] (3.4,2.68) -- (3.38,3.58);
\draw [->] (3.38,3.58) -- (3.36,4.54);
\draw [->] (3.36,4.54) -- (3.34,5.42);
\draw [->] (5.96,1.96) -- (5.36,3.54);
\draw [->] (5.36,3.54) -- (6,5);
\draw [->] (6,5) -- (7.8,4.98);
\draw [->] (7.8,4.98) -- (8.52,3.58);
\draw [->] (8.52,3.58) -- (8,2);
\draw [->] (8,2) -- (5.96,1.96);
\draw (3.18,1.28) node[anchor=north west] {$I_5$};
\draw (6.65,1.27) node[anchor=north west] {$O_6$};
\draw (11.1,1.3) node[anchor=north west] {$L_6$};
\begin{scriptsize}
\fill [color=black] (3.38,1.82) circle (1.5pt);
\draw[color=black] (3.52,2.1);
\fill [color=black] (3.4,2.68) circle (1.5pt);
\draw[color=black] (3.56,2.96);
\fill [color=black] (3.38,3.58) circle (1.5pt);
\draw[color=black] (3.54,3.86);
\fill [color=black] (3.36,4.54) circle (1.5pt);
\draw[color=black] (3.52,4.82);
\fill [color=black] (3.34,5.42) circle (1.5pt);
\draw[color=black] (3.5,5.7);
\fill [color=black] (5.96,1.96) circle (1.5pt);
\draw[color=black] (6.1,2.24);
\fill [color=black] (5.36,3.54) circle (1.5pt);
\draw[color=black] (5.52,3.82);
\fill [color=black] (6,5) circle (1.5pt);
\draw[color=black] (6.16,5.28);
\fill [color=black] (7.8,4.98) circle (1.5pt);
\draw[color=black] (7.9,5.26);
\fill [color=black] (8.52,3.58) circle (1.5pt);
\draw[color=black] (8.66,3.86);
\fill [color=black] (8,2) circle (1.5pt);
\draw[color=black] (8.16,2.28);
\fill [color=black] (10.56,2.02) circle (1.5pt);
\draw[color=black] (10.7,2.3);
\fill [color=black] (12.1,2.04) circle (1.5pt);
\draw[color=black] (12.26,2.32);
\fill [color=black] (10.56,3.48) circle (1.5pt);
\draw[color=black] (10.72,3.76);
\fill [color=black] (12.2,3.5) circle (1.5pt);
\draw[color=black] (12.36,3.78);
\fill [color=black] (10.52,4.78) circle (1.5pt);
\draw[color=black] (10.68,5.06);
\fill [color=black] (12.14,4.82) circle (1.5pt);
\draw[color=black] (12.3,5.1);
\draw[rotate around={-90:(10.56,2.02)}] [->] (10.56,2.02) arc (360:10:5pt);
\draw[rotate around={-90:(12.1,2.04)}] [->] (12.1,2.04) arc (360:10:5pt);
\draw[rotate around={-90:(10.56,3.48)}] [->] (10.56,3.48) arc (360:10:5pt);
\draw[rotate around={-90:(12.2,3.5)}] [->] (12.2,3.5) arc (360:10:5pt);
\draw[rotate around={-90:(10.52,4.78)}] [->] (10.52,4.78) arc (360:10:5pt);
\draw[rotate around={-90:(12.14,4.82)}] [->] (12.14,4.82) arc (360:10:5pt);
\end{scriptsize}
\end{tikzpicture}
\caption{$I_5$, $O_6$, $I_6$}
\label{O_n abra}
\end{center}
\end{figure}
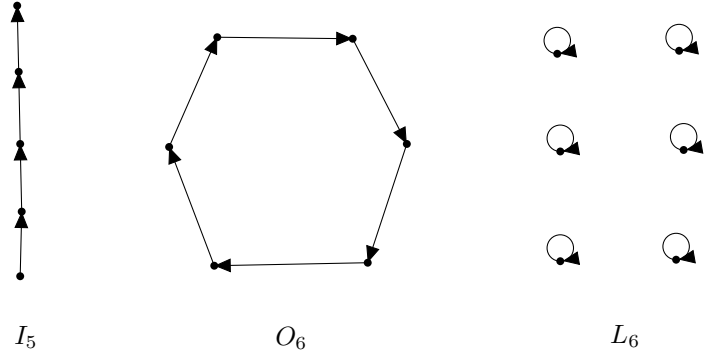

\begin{definition}\label{defOnto} Let $\mathcal{O}_{n}^{\to}$ denote the set of digraphs $X$ which we get by adding an edge that is not a loop to $O_n$.
\end{definition}

Note that $X\succ O_n$ for all $X\in \mathcal{O}_{n}^{\to}$.

\begin{definition}\label{defLfuggveny} For $G\in \mathcal{D}$, let 
$L(G)$ denote the digraph that we get from $G$ by adding all loops possible.
For $\mathcal{G}\subseteq \mathcal{D}$,
let us define $\mathcal{L}(\mathcal{G})=\{L(G) : G\in \mathcal{G}\}$.
\end{definition}

We would like to mention that this definition was a little different in \cite{Kunos2015}. We then assumed that $G$ has no loops which we do not do here.

\begin{definition}\label{defMfuggveny} For $G\in \mathcal{D}$, let $M(G)$ the digraph that we get from $G$ be by leaving all the loops out.
For $\mathcal{G}\subseteq \mathcal{D}$, let us define $\mathcal{M}(\mathcal{G}):=\{M(G):G\in \mathcal{G}\}.$
\end{definition}

\begin{definition}\label{defonl} Let $O_{n,L}$ be the following digraph: $V(O_{n,L})=\{v_1,v_2,\dots, v_n\}$, 
$E(O_{n,L})=E(O_n)\cup\{(v_1,v_1)\}$, meaning
\begin{displaymath}
E(O_{n,L})=\{(v_1,v_1),(v_1,v_2),(v_2,v_3),\dots, (v_{n-1}, v_n), (v_n, v_1)\}.
\end{displaymath}
\end{definition}

\begin{figure}[h]
\begin{center}
\begin{tikzpicture}[line cap=round,line join=round,>=triangle 45,x=1.0cm,y=1.2cm]
\clip(2.96,1.82) rectangle (5.18,3.32);
\draw [->] (3.34,2.02) -- (4.84,2.02);
\draw [->] (4.84,2.02) -- (4,3);
\draw [->] (4,3) -- (3.34,2.02);
\begin{scriptsize}
\fill [color=black] (3.34,2.02) circle (1.5pt);
\draw[color=black] (3.48,2.3);
\fill [color=black] (4.84,2.02);
\draw[color=black] (5,2.3);
\fill [color=black] (4,3) circle (1.5pt);
\draw[color=black] (4.16,3.28);
\draw[rotate around={-90:(4,3)}] [->] (4,3) arc (360:10:5pt);
\end{scriptsize}
\end{tikzpicture}
\caption{$O_{3,L}$}
\end{center}
\end{figure}

\begin{definition}\label{deffiugraf} Let $\male_n$ be the digraph with $n+1$ vertices $V(\male_n)=\{v_1,\dots , v_{n+1}\}$ for which
$v_1$, $v_2$, \dots, $v_{n}$ constitute a circle $O_n$ and the only additional edge in $\male_n$ is $(v_n,v_{n+1})$. 
Let $\male_n^L$ be the previous digraph plus one loop:
\begin{displaymath}
E(\male_n^L)=E(\male_n )\cup \{(v_{n+1},v_{n+1})\}.
\end{displaymath}
\end{definition}

\begin{figure}
\begin{center}
\begin{tikzpicture}[line cap=round,line join=round,>=triangle 45,x=1.0cm,y=1.0cm]
\clip(1.8,0.47) rectangle (10.55,5.54);
\draw [->] (3.14,1.44) -- (4.7,1.42);
\draw [->] (4.7,1.42) -- (5.47,2.66);
\draw [->] (5.47,2.66) -- (4.68,3.9);
\draw [->] (4.68,3.9) -- (3.18,3.88);
\draw [->] (3.18,3.88) -- (2.37,2.57);
\draw [->] (2.37,2.57) -- (3.14,1.44);
\draw [->] (4.68,3.9) -- (5.56,5.08);
\draw [->] (7.46,1.46) -- (9.02,1.44);
\draw [->] (9.02,1.44) -- (9.79,2.68);
\draw [->] (9.79,2.68) -- (9,3.92);
\draw [->] (9,3.92) -- (7.5,3.9);
\draw [->] (7.5,3.9) -- (6.69,2.65);
\draw [->] (6.69,2.65) -- (7.46,1.46);
\draw [->] (9,3.92) -- (9.88,5.1);
\draw (3.68,0.99) node[anchor=north west] {$\male_6$};
\draw (8.07,1.05) node[anchor=north west] {$\male_6^L$};
\begin{scriptsize}
\fill [color=black] (3.14,1.44) circle (1.5pt);
\draw[color=black] (3.28,1.73) ;
\fill [color=black] (4.7,1.42) circle (1.5pt);
\draw[color=black] (4.86,1.71) ;
\fill [color=black] (5.47,2.66) circle (1.5pt);
\draw[color=black] (5.62,2.94) ;
\fill [color=black] (4.68,3.9) circle (1.5pt);
\draw[color=black] (4.84,4.18) ;
\fill [color=black] (3.18,3.88) circle (1.5pt);
\draw[color=black] (3.34,4.16) ;
\fill [color=black] (2.37,2.57) circle (1.5pt);
\draw[color=black] (2.5,2.84) ;
\fill [color=black] (5.56,5.08) circle (1.5pt);
\draw[color=black] (5.72,5.36) ;
\fill [color=black] (7.46,1.46) circle (1.5pt);
\draw[color=black] (7.73,1.75) ;
\fill [color=black] (9.02,1.44) circle (1.5pt);
\draw[color=black] (9.31,1.73) ;
\fill [color=black] (9.79,2.68) circle (1.5pt);
\draw[color=black] (10.09,2.96) ;
\fill [color=black] (9,3.92) circle (1.5pt);
\draw[color=black] (9.29,4.2) ;
\fill [color=black] (7.5,3.9) circle (1.5pt);
\draw[color=black] (7.79,4.18) ;
\fill [color=black] (6.69,2.65) circle (1.5pt);
\draw[color=black] (6.97,2.92) ;
\fill [color=black] (9.88,5.1) circle (1.5pt);
\draw[color=black] (10.19,5.38) ;
\draw[rotate around={-90:(9.88,5.1)}] [->] (9.88,5.1) arc (360:10:5pt);
\end{scriptsize}
\end{tikzpicture}
\caption{$\male_6$ and $\male_6^L$}
\end{center}
\end{figure}

\section{The proof of the main theorem (Theorem \ref{9ahgfa})}\label{aspioerfdsc}

In this chapter we prove  the ``if'' direction of Theorem \ref{9ahgfa}. 
Here, if we just write ``definability'', we will always mean first-order definability in $\mathcal{D}'$.\\

In the proof we discuss in this section, the statement of Corollary \ref{allsppvnbt} turns out to be very useful as there are a number of specific digraphs whose definability is used throughout our proofs. 
There are two different approaches to the proof of this section according to our intentions with the main result of \cite{Kunos2015}, that is Theorem \ref{vxmknfsj}. \\
EITHER
\begin{itemize}
\item We use Corollary \ref{allsppvnbt}, considering it as a consequence of Theorem \ref{vxmknfsj}, see the proof of \cite[Theorem 3.3]{Kunos2015}. 
In this case, we use paper \cite{Kunos2015}, therefore its result cannot be considered as a corollary of Theorem \ref{9ahgfa}.
\end{itemize}
OR
\begin{itemize}
\item We use the following lemma to replace the statement of Corrolary \ref{allsppvnbt} in the special cases of those specific digraphs that we would use the statement of  Corrolary~\ref{allsppvnbt} for. 
This way Corrolary \ref{allsppvnbt} and the main result of \cite{Kunos2015} can both be viewed as corrolaries of Theorem \ref{9ahgfa}.
\end{itemize}

The latter approach requires the following lemma.

\begin{lemm}\label{vbncxmze} The following digraphs (of at most 9 elements) are first-order definable in $\mathcal{D'}$: $I_2$, $L_1$, $E_2$, $A$, $A^T$, and the digraphs under (\ref{hxgetrirow}), (\ref{ncbhzuippqw}), and (\ref{cnjusiuduh}).
\end{lemm}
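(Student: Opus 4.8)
The plan is to establish the definability of each listed digraph in $\mathcal{D}'$ by exhibiting, for each one, a first-order formula in the language $(\leq, A)$ that picks it out uniquely. The general strategy is to work our way up from the smallest digraphs, bootstrapping: once a digraph $H$ is known to be definable, it may be used as a ``named constant'' in formulas defining larger digraphs, and the covering relation $\prec$ (which is definable) together with counting of elements via chains of covers gives us considerable leverage. Throughout we exploit the obvious definable invariants: the number of vertices of $G$ (the length of a maximal chain from the bottom element $E_1$ up to $G$ in the ``vertex-adding'' direction can be captured, or more simply, $E_n$ is the unique minimal element among $n$-element digraphs and $F_n$ the unique maximal one, so ``$G$ has $n$ vertices'' is definable once $E_n$ or $F_n$ is), and the number of edges (again a covering-chain count, since adding one edge is a cover).

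First I would handle the very small cases by brute force. The digraph $E_2$ is definable as the unique digraph covering $E_1$ that is covered only by... more carefully: $E_1$ is the least element of $\mathcal{D}$, so it is definable; $E_2$ and $I_2$ (a.k.a.\ ${\bf I}_2$) are precisely the two digraphs covering $E_1$, and they are distinguished because $I_2 \geq$ nothing nontrivial below except $E_1$ while $E_2 \leq I_2$ but not conversely --- indeed $E_2 \prec I_2$, and $E_2$ is the unique atom below which no loop sits whereas $L_1$ (the one-vertex loop) is the other digraph covering $E_1$. So among the covers of $E_1$ we have exactly $E_2$, $I_2$, $L_1$; these three are mutually incomparable except that all cover $E_1$, and they are distinguished from each other by, e.g., counting covers above them or by comparability with $E_2$ (both $I_2$ and $L_1$ lie above... no, $L_1 \not\geq E_2$ since $L_1$ has one vertex). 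The cleanest route: $E_2, I_2, L_1$ are the three covers of $E_1$; $L_1$ is the only one of the three with a single vertex, detectable as the unique one not above $E_2$; then $I_2$ and $E_2$ are distinguished since $E_2 \leq I_2$. This settles $E_2, I_2, L_1$.

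Next I would treat $A$ and $A^T$, where $A$ has $V(A)=[3]$, $E(A)=\{(1,3),(2,3)\}$. Since $A \mapsto A^T$ is an automorphism, no formula distinguishes them, so the right target is to define the set $\{A, A^T\}$; the paper in fact only needs this (cf.\ the remark ``The set $\{A,A^T\}$ is easily defined''). To define $\{A, A^T\}$: $A$ is a $3$-vertex, $2$-edge digraph with no loops (so it lies above $E_3$ but not above $L_1$), it is not $I_3$ (which is also $3$ vertices, $2$ edges, loopless), and $A \not\cong A^T$ while $I_3 \cong I_3^T$... we must separate $\{A,A^T\}$ from $I_3$ using only the order. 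Here the covering structure below: among $3$-vertex $2$-edge loopless digraphs, the candidates are $I_3$ (path), $A$ (and $A^T$), and $E_2 \dot\cup I_2$-type configurations; one distinguishes them by which $2$-vertex digraphs embed and how many extensions cover them. I would nail this down by an explicit finite case check of the (few) digraphs in the relevant region of the order. Once $E_2, I_2, L_1, A, A^T$ are in hand, the remaining digraphs under (\ref{hxgetrirow}), (\ref{ncbhzuippqw}), (\ref{cnjusiuduh}) --- all of at most $9$ elements --- are defined one by one, using the already-defined digraphs as constants, the covering relation, vertex- and edge-counts, and ad hoc order-theoretic features (e.g., which small digraphs embed, how many covers of various shapes exist).

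The main obstacle I anticipate is the bookkeeping for $\{A, A^T\}$ and for the larger digraphs in the displayed lists: the order $(\mathcal{D}, \leq)$ is very rich even at small sizes, so ruling out ``accidental'' coincidences --- making sure the formula written for a target digraph $G$ is not also satisfied by some other digraph $G'$ of the same vertex and edge count --- requires a careful, essentially exhaustive, finite analysis of the relevant initial segment of the order. This is purely mechanical but error-prone, and it is where the real work (as opposed to the clean inductive idea) resides. A secondary subtlety is that we are deliberately \emph{not} allowed to invoke Corollary~\ref{allsppvnbt} (that would make the argument circular relative to the goal of deriving Theorem~\ref{vxmknfsj} as a corollary), so every definition here must be self-contained in $(\leq, A)$; I would organize the proof as a sequence of claims, each defining one more digraph, with the verification of uniqueness for each claim reduced to inspecting a small explicitly-drawn portion of the Hasse diagram.
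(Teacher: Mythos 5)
Your overall strategy---bootstrapping from the bottom of the poset, using already-defined digraphs as parameters together with the covering relation, and treating the verification as a finite, mechanical inspection of the lowest levels of the Hasse diagram---is exactly what the paper intends; indeed the paper omits the proof altogether with the remark that one ``only need[s] to consider some (finite) levels at the bottom'' of $\mathcal{D}$. So at the level of approach there is nothing to compare.

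There is, however, one genuine error. You assert that since $G\mapsto G^T$ is an automorphism, ``no formula distinguishes'' $A$ from $A^T$, and you therefore retarget the claim to the definability of the set $\{A,A^T\}$. This is false for the structure in question: the lemma is about $\mathcal{D}'=(\mathcal{D};\leq,A)$, in which $A$ is a \emph{constant} of the language, and transposition is not an automorphism of $\mathcal{D}'$ precisely because it sends the constant $A$ to $A^T\not\cong A$. Breaking this symmetry is the entire purpose of adjoining $A$. Consequently $\{A\}$ is trivially definable (by $x=A$), and $\{A^T\}$ is definable as the element of the order-definable doubleton $\{A,A^T\}$ that differs from $A$. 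The distinction matters downstream: the condition $A^T\leq X$ occurs \emph{alone} (not paired with $A\leq X$) in the definition of the relation \eqref{telgapnn} in Lemma~\ref{apoowjsbc}, so proving only that the doubleton is definable would not support the later arguments. The fix is one line, but as written your argument establishes a strictly weaker statement than the one asserted. A second, smaller slip: $I_2$ does not cover $E_1$, since $E_1\prec E_2\prec I_2$; the covers of $E_1$ are exactly $E_2$ and $L_1$, and these two are incomparable (contrary to your ``mutually incomparable'' list of three, which moreover contradicts your own later use of $E_2\leq I_2$). Since the entire content of this lemma \emph{is} the accuracy of precisely this kind of bookkeeping, these details are where the proof lives and need to be done carefully.
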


\begin{proof} The proof of this lemma must go without the usage of Corrolary \ref{allsppvnbt} (and Theorem \ref{vxmknfsj}). 
We only need to consider some (finite) levels at the ``bottom'' of the poset $\mathcal{D}$. This means it is only a matter of time for someone to create this proof. The detailed proof would be technical and it would bring nothing new to the table, so we skip it.
 \end{proof}

From now on, either of the two approaches above can be followed---the proof in the remainder of this chapter is the same in both cases. It is up to the reader which approach he favors and has in mind while reading the rest of the paper.

\begin{lemm}
The sets
$\mathcal{E}:=\{E_n : n\in \mathbb{N}\}$, $\mathcal{L}:=\{L_n:n\in \mathbb{N}\}$ and the relation $\{(L_n,E_n): n\in\mathbb{N}\}$ are definable.
\end{lemm}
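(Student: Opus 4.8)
The plan is to exploit the fact that the specific small digraphs $E_2$, $L_1$ (and $I_2$) are already definable by Lemma~\ref{vbncxmze}, and then to pin down the sets $\mathcal{E}$ and $\mathcal{L}$ by order-theoretic characterizations inside $(\mathcal{D};\leq)$. The guiding idea is that $E_n = \underbrace{E_1\mathrel{\dot{\cup}}\dots\mathrel{\dot{\cup}}E_1}_{n}$ and $L_n = \underbrace{L_1\mathrel{\dot{\cup}}\dots\mathrel{\dot{\cup}}L_1}_{n}$, so both $\mathcal{E}$ and $\mathcal{L}$ are ``chains of antichains'' generated by a single definable atom, and membership in them should be expressible by saying that a digraph $G$ is built up only from copies of that atom. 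Concretely, $E_n$ is characterized among all digraphs as the unique (up to the already-broken symmetry, which is trivial here since $E_n^T=E_n$ and $L_n^T=L_n$) digraph $G$ such that $G$ has no edges and $|V(G)|=n$; the edge-free condition is ``$I_2\not\leq G$'', and the size condition must be recovered from the order alone.

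\textbf{Key steps.} First I would define $\mathcal{E}$: a digraph $G$ lies in $\mathcal{E}$ if and only if $E_2\leq G$ (or $G\in\{E_1,E_0\}$, handled separately) and $I_2\not\leq G$ and moreover $G$ is ``$\leq$-comparable to a sufficiently rich family'' — but the clean route is: $I_2\not\leq G$ forces $G$ to be an empty digraph $E_m$, and then $\mathcal{E}=\{G : I_2\not\leq G\}$ outright, since the only loopless, edgeless digraphs are exactly the $E_m$. Wait — one must also forbid loops: $L_1\not\leq G$ handles that. So $\mathcal{E}=\{G : I_2\not\leq G \wedge L_1\not\leq G\}$, and since $I_2$ and $L_1$ are definable, $\mathcal{E}$ is definable. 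Second, for $\mathcal{L}$: $L_n$ is characterized as a digraph all of whose edges are loops and which has ``no two distinct non-loop-carrying vertices identified'', i.e. every vertex carries a loop; order-theoretically, $G\in\mathcal{L}$ iff $E_2\not\leq M(G)$-style conditions. The cleaner characterization: $G\in\mathcal{L}$ iff $I_2\not\leq G$ (no non-loop edges), $L_1\leq G$ or $G=E_0$, and $L(E_1)=L_1$ generates $G$ in the sense that $G$ covers exactly along adding one loop. Actually the sharpest: $\mathcal{L}=\{G : I_2\not\leq G,\ E_1\not\leq M(G)\}$ where $M$ strips loops; but $M$ is not obviously order-definable, so instead I would characterize $L_n$ via the cover relation as the top of a maximal chain $L_1\prec L_2\prec\cdots$ inside $\{G: I_2\not\leq G\}$, or equivalently as those $G$ with $I_2 \not\le G$ that are covered by $I_2$-free and loop-containing extensions in a controlled way. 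Third, for the relation $\{(L_n,E_n)\}$: once $\mathcal{E}$ and $\mathcal{L}$ are in hand, $(L,E)$ is in the relation iff $L\in\mathcal{L}$, $E\in\mathcal{E}$, $E\leq L$, and there is no $E'\in\mathcal{E}$ with $E<E'\leq L$ — i.e. $E$ is the largest empty subgraph of $L$, which has exactly $|V(L)|$ vertices, giving $|V(E)|=|V(L)|$ as required.

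\textbf{Main obstacle.} The hard part will be recovering the vertex-count (the ``same number of vertices'' in $\{(L_n,E_n)\}$, and distinguishing $L_m$ from $L_n$, $E_m$ from $E_n$) from the order alone, since $\leq$ a priori only sees embeddability. The resolution is precisely the observation used above: inside the definable down-set $\{G : I_2\not\leq G\}$ the empty digraphs $E_m$ form a maximal chain $E_0\prec E_1\prec E_2\prec\cdots$ and the looped digraphs $L_m$ form a parallel maximal chain, with $E_m$ the bottom of the interval above it and $L_m$ obtained by adding loops one at a time; so ``$E$ and $L$ sit at the same height'' is expressible by requiring $E\leq L$ together with the non-existence of an intermediate $E'\in\mathcal{E}$ strictly between them. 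I expect the bookkeeping at the very bottom ($E_0,E_1$, whether one includes the empty digraph, loops versus non-loops) to be the only genuinely fiddly point, and all of it is first-order over $(\mathcal{D};\leq)$ once $I_2$, $L_1$, $E_2$ are available from Lemma~\ref{vbncxmze}.
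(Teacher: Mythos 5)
Your definition of $\mathcal{E}$ (as $\{G : I_2\nleq G \wedge L_1\nleq G\}$) and your definition of the relation $\{(L_n,E_n)\}$ (as: $Y$ is the maximal element of $\mathcal{E}$ embeddable into $X\in\mathcal{L}$) are exactly the paper's. The gap is in $\mathcal{L}$: you never commit to a single first-order characterization, and the one concrete mechanism you lean on --- that the $L_m$ form a ``parallel maximal chain'' $L_1\prec L_2\prec\cdots$ inside $\{G: I_2\nleq G\}$ --- is false. The digraphs with $I_2\nleq G$ are exactly the disjoint unions $E_a\mathrel{\dot{\cup}} L_b$, and $L_n < L_n\mathrel{\dot{\cup}} E_1 < L_{n+1}$ (the isolated vertex of $L_n\mathrel{\dot{\cup}} E_1$ may be mapped onto a looped vertex, since homomorphisms need not reflect edges), so $L_{n+1}$ does not cover $L_n$, neither in $\mathcal{D}$ nor in the restricted subposet; the $E_m$ do form a cover chain, but the $L_m$ do not, and ``top of a maximal chain'' does not isolate them. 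Your other candidate, $E_1\nleq M(G)$, you rightly discard since $M$ is only shown definable later in the paper, and your remaining phrasing (``covered by $I_2$-free and loop-containing extensions in a controlled way'') is not a definition.

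The paper closes this hole by pinning down the vertex count first and letting maximality force the loops: $X\in\mathcal{L}$ iff there exists $E_i\in\mathcal{E}$ such that $X$ is maximal subject to $E_i\leq X$, $E_{i+1}\nleq X$ and $I_2\nleq X$, where $E_{i+1}$ is recovered from $E_i$ as its unique cover inside $\mathcal{E}$. The first two conditions say $X$ has exactly $i$ vertices, the third says every edge of $X$ is a loop, and among the digraphs $E_a\mathrel{\dot{\cup}} L_b$ with $a+b=i$ the unique maximal one is $L_i$. With $\mathcal{L}$ repaired this way, the rest of your argument (including the recovery of ``same number of vertices'' via the maximal $E\in\mathcal{E}$ below a given $L\in\mathcal{L}$) goes through as you wrote it; the fussing over $E_0$ is moot since the paper's digraphs have nonempty vertex sets.
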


\begin{proof} $\mathcal{E}$ is the set of $X\in \mathcal{D}$ for which $I_2\nleq X$ and $L_1\nleq X$.
$\mathcal{L}$ is the set of those digraphs $X\in \mathcal{D}$ for which there exists $E_i\in\mathcal{E}$ such that $X$ is maximal with the properties $E_i\leq X$, $E_{i+1}\nleq X$ and $I_2\nleq X$. ($E_{i+1}$ is easily defined using $E_i$ as it is the only cover of $E_i$ in the set $\mathcal{E}$.) \\
The relation consists of those pairs $(X,Y)\in\mathcal{D}^2$ for which $X\in \mathcal{L}$, and $Y$ is maximal element of $\mathcal{E}$ that is embeddable into $X$. 
 \end{proof}

The relations
\begin{equation}\label{yiteja}
\{(G,E_n):E_n\leq G, \;E_{n+1}\nleq G\},\; \text{ and}
\end{equation}
\begin{equation}\label{leitaf}
\{(G,L_n):L_n\leq G, \;L_{n+1}\nleq G\}
\end{equation}
are obviously definable, from which the following relations are definable too:

\begin{definition}\label{xmvbyxhu8231}
$$ \mathfrak{E}:=\{(G, K): \exists \; E_n\in\mathcal{E},\text{\;\;for which\;\;} (G,E_n),(K,E_n)\in (\ref{yiteja})\}, $$
$$ \mathfrak{L}:=\{(G,K): \exists \; L_n\in\mathcal{L},\text{\;\;for which\;\;} (G,L_n),(K,L_n)\in (\ref{leitaf})\}. $$
\end{definition}

\begin{definition}\label{123897834611} Let $\mathcal{O}$ denote the set of those digraphs that are disjoint unions of circles ($O_n$ for $n\geq 2$) of not necessarily different sizes.
\end{definition}

\begin{lemm} $\mathcal{O}$ is definable. 
\end{lemm}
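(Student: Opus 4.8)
The plan is to pin $\mathcal{O}$ down by a forbidden‑subdigraph condition together with a maximality condition, using only digraphs already known to be definable. First I would let $\mathcal{C}$ be the class of $G\in\mathcal D$ with $A\nleq G$, $A^{T}\nleq G$ and $L_1\nleq G$; since $A$, $A^{T}$ and $L_1$ are definable (Lemma \ref{vbncxmze}), $\mathcal{C}$ is definable. The condition $L_1\nleq G$ says $G$ is loopless, and then $A\nleq G$ (resp.\ $A^{T}\nleq G$) forbids a vertex with two distinct in‑neighbours (resp.\ out‑neighbours); a finite loopless digraph in which every vertex has in‑degree and out‑degree at most $1$ is exactly a disjoint union of directed paths (on one or more vertices, the one‑vertex path being $E_1$) and directed cycles $O_n$ ($n\geq 2$). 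Thus $\mathcal{O}\subseteq\mathcal{C}$, and $\mathcal{O}$ is precisely the set of $G\in\mathcal{C}$ having no path component.

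Second, I would observe that adding an edge $(x,y)$ (necessarily $x\neq y$, as $G$ is loopless) to $G\in\mathcal{C}$ keeps it in $\mathcal{C}$ exactly when $x$ is a sink and $y$ is a source of $G$. A short case distinction on the path components then shows that $G\in\mathcal{C}$ admits no such edge addition---equivalently, $G$ is $\leq$-maximal among the members of $\mathcal{C}$ on $|V(G)|$ vertices---iff $G$ has at most one path component and that component, if present, is a single isolated vertex. Hence the maximal members of $\mathcal{C}$ on $n$ vertices are precisely the disjoint unions of cycles on $n$ vertices, the digraphs $G_0\mathrel{\dot{\cup}} E_1$ with $G_0$ a disjoint union of cycles on $n-1$ vertices, and, when $n=1$, the digraph $E_1$ itself; among these, the ones lying in $\mathcal{O}$ are exactly those with no isolated vertex. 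The property ``$G$ is $\leq$-maximal in $\mathcal{C}$ among digraphs with $|V(G)|$ vertices'' is definable using $\mathfrak{E}$ and the (definable) strict order.

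Third, I would define ``$G\in\mathcal{C}$ has no isolated vertex''. For this I would establish: (i) $G_0\mathrel{\dot{\cup}} E_1$ covers $G_0$ for every $G_0\in\mathcal D$; and (ii) conversely, if $G'\prec G$ with $|V(G)|=|V(G')|+1$ and $G$ is loopless, then $G\cong G'\mathrel{\dot{\cup}} E_1$. For (ii), take an embedding $G'\hookrightarrow G$ and let $u$ be the vertex of $G$ it misses. If $u$ were not isolated, deleting an edge of $G$ incident to $u$ (which exists and is not a loop) produces $H$ with $G'\leq H<G$, whence $G'\cong H$ because $G'\prec G$---but $|V(H)|=|V(G)|>|V(G')|$, a contradiction; so $u$ is isolated, and then $G'\leq G-u<G$ forces $G'\cong G-u$, i.e.\ $G\cong G'\mathrel{\dot{\cup}} E_1$. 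Consequently, for loopless $G$, having an isolated vertex is equivalent to $\exists G'\,\bigl(G'\prec G\ \wedge\ |V(G)|=|V(G')|+1\bigr)$, and the size condition is definable through \eqref{yiteja}, $\mathcal{E}$, and the remark that for $E_m,E_{m'}\in\mathcal{E}$ one has $E_{m'}\prec E_m$ iff $m=m'+1$ (nothing of $\mathcal D$ lies strictly between two consecutive empty digraphs). Conjoining the three definable conditions---$G\in\mathcal{C}$, $G$ maximal in $\mathcal{C}$ among digraphs on $|V(G)|$ vertices, and $G$ has no isolated vertex---yields a first‑order definition of $\mathcal{O}$.

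The step I expect to be the main obstacle is (ii): showing that the extra vertex in a cover of a loopless digraph is forced to be isolated, and, hand in hand with it, verifying that ``maximal in $\mathcal{C}$'' together with ``no isolated vertex'' excises exactly $\mathcal{O}$---that is, that the only maximal members of $\mathcal{C}$ outside $\mathcal{O}$ are $E_1$ and the digraphs $G_0\mathrel{\dot{\cup}} E_1$ with $G_0\in\mathcal{O}$.
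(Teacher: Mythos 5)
Your proposal is correct and follows essentially the same route as the paper: forbid $A$, $A^{T}$ and $L_1$, take maximal elements among digraphs with the same number of vertices (so every component becomes a cycle except possibly one isolated vertex), and then strip off the digraphs containing an isolated vertex via a covering condition. The only cosmetic difference is at the last step, where the paper removes $G_0\mathrel{\dot{\cup}} E_1$ by asking that no member of the maximal class be covered by $X$, while you detect the isolated vertex by the existence of a lower cover with one fewer vertex; both work.
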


\begin{proof} Let $\mathcal{H}$ be the set consisting of those $X\in\mathcal{D}$ for which there exists $E_n\in\mathcal{E}$ such that  $X$  is maximal with the properties
\begin{equation}\label{aiuueelllbv}
E_2\leq X, \; A\nleq X, \; A^T\nleq X, \; L_1\nleq X, \; \text{and}\;\; (X,E_n)\in\mathfrak{E}. 
\end{equation}
We state that 
\begin{equation}\label{karuervcy}
\mathcal{H}=\mathcal{O}\cup \{G\mathrel{\dot{\cup}} E_1: G\in \mathcal{O}\}.
\end{equation}

Let $G\in \mathcal{H}$. It is easy to see that there can be at most 1 weakly connected component of $G$ that has only 1 vertex (and hence is isomorphic to $E_1$) as the opposite would conflict the maximality of $G$. 
The conditions $A\nleq X$ and $A^T\nleq X$ mean there is no vertex in $G$ that is either an ending or a starting point of two separate edges, respectively. Therefore every weakly connected component of $G$ is either a circle or only one element.
Finally, $\mathcal{O}$ is the set of $X\in \mathcal{D}$ for which $X\in\mathcal{H}$ but there is no such $Y\in \mathcal{H}$ that $Y\prec X$.
 \end{proof}

\begin{lemm}\label{ppghhfjasd} The following sets and relations are definable:
$$\mathcal{O}_{\cup}:=\{O_n:n\geq2\}, \;\;\; \{(O_n,E_n): n\geq 2\},$$
\begin{equation}\label{yxcnyxbmieuwqieo99}
\{F_n:n\in\mathbb{N}\}, \;\;\; \{(F_n,E_n): n\in\mathbb{N}\},
\end{equation}
\begin{equation}\label{adubvcipqw} \{(G,M(G)): G\in\mathcal{D}\},
\end{equation}
$$\mathfrak{M}:=\{(X,Y): \exists Z ((X,Z),(Y,Z)\in (\ref{adubvcipqw}))\},$$
\begin{equation}\label{cvjnhdufasid}
\{(G,L(G)): G\in\mathcal{D}\}.
\end{equation}
\end{lemm}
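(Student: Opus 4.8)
The plan is to build up the definability of each item in Lemma~\ref{ppghhfjasd} in the stated order, leveraging the previously established definability of $\mathcal{O}$, $\mathcal{E}$, $\mathcal{L}$, $\mathfrak{E}$, $\mathfrak{L}$, and the covering relation $\prec$, together with the fixed definable digraphs from Lemma~\ref{vbncxmze}.

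\textbf{Isolating the connected circles $\mathcal{O}_\cup$.} First I would define $\mathcal{O}_\cup = \{O_n : n \geq 2\}$ as the set of $X \in \mathcal{O}$ that are \emph{weakly connected} in the internal sense that they cannot be written as a nontrivial disjoint union of two elements of $\mathcal{O}$. Concretely, $X \in \mathcal{O}_\cup$ iff $X \in \mathcal{O}$, $X$ is not the one-point graph, and there do not exist $Y, Z \in \mathcal{O}$ with $Y, Z \neq E_0$ (the empty digraph) such that $X$ covers both of them in a way witnessing $X = Y \mathrel{\dot\cup} Z$; equivalently, $X\in\mathcal{O}$ and there is no proper nonempty $Y\in\mathcal{O}$ with $Y\le X$ and $X\setminus Y\in\mathcal{O}$, which can be phrased order-theoretically since $\mathcal{O}$ is closed under disjoint unions and removal of components. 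Once $\mathcal{O}_\cup$ is in hand, the relation $\{(O_n, E_n) : n \geq 2\}$ is definable: pair $O_n \in \mathcal{O}_\cup$ with the maximal $E_m \in \mathcal{E}$ with $E_m \le O_n$, which is exactly $E_n$ since $O_n$ has $n$ vertices.

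\textbf{The full digraphs and the $M$-operation.} To define $\{F_n : n \in \mathbb{N}\}$, I would characterize $F_n$ as the maximal digraph $X$ with $E_n \le X$ and $E_{n+1} \nleq X$ — i.e.\ among all digraphs on $n$ vertices, $F_n$ is the top element — using the already-definable relation (\ref{yiteja}); then $\{(F_n, E_n)\}$ comes for free. For (\ref{adubvcipqw}), the relation $\{(G, M(G))\}$, the idea is that $M(G)$ is obtained from $G$ by deleting exactly the loops: using the definable relation (\ref{leitaf}) that counts loops via $\mathfrak{L}$, I would say $(G,H) \in (\ref{adubvcipqw})$ iff $H \le G$, $H$ has no loops (i.e.\ $L_1 \nleq H$, equivalently $(H,L_0)\in$ the loop-count relation), the non-loop edge structure of $H$ agrees with that of $G$ (captured by $H$ and $G$ lying in the same class of $\mathfrak{M}$-to-be or, more directly, $G \le L(H)$ where $L(H)$ will be handled below, or by requiring $H$ to be maximal among loopless digraphs below $G$), and $H, G$ have the same vertex count via $\mathfrak{E}$. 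The subtlety is bootstrapping: $M$ and $L$ are mutually helpful, so I would first pin down $M(G)$ as \emph{the} loopless digraph $H$ with $H\le G$, same number of vertices as $G$ (so $(G,H)\in\mathfrak E$), and maximal such — maximality forces $H$ to retain every non-loop edge of $G$. Then $\mathfrak{M}$ is immediate as the "same $M$-image" equivalence relation, and (\ref{cvjnhdufasid}), the graph of $L$, is defined symmetrically: $(G, L(G))$ is the pair where $L(G)$ is the \emph{minimal} digraph $K$ with $G \le K$, $(G,K)\in\mathfrak E$ (same vertices), and all loops present (i.e.\ $(K,L_n)\in(\ref{leitaf})$ with the same $n$), equivalently $M(K)=M(G)$ and $K$ has all $n$ loops.

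\textbf{Main obstacle.} I expect the genuine difficulty to lie in the $M$/$L$ pair (\ref{adubvcipqw}) and (\ref{cvjnhdufasid}): expressing "$H$ is obtained from $G$ by removing only the loops, changing nothing else" purely order-theoretically requires controlling the vertex set (handled by $\mathfrak{E}$) \emph{and} all non-loop edges simultaneously, and the natural maximality/minimality characterizations must be checked to have no spurious solutions — e.g.\ one must rule out a loopless $H\le G$ that drops a non-loop edge of $G$ but is still maximal because adding that edge back is incomparable with some other constraint. I believe the maximality argument goes through because adding any missing non-loop edge of $G$ to a loopless $H\le G$ yields another loopless digraph still $\le G$ on the same vertices, strictly above $H$, so a maximal such $H$ must equal $M(G)$; but verifying this cleanly, and the analogous minimality claim for $L(G)$, is the crux. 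The remaining items — $\mathcal{O}_\cup$, the two $E_n$-pairings, and $\{F_n\}$ — are comparatively routine given the machinery already assembled.
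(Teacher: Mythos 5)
Your proposal is correct in substance and follows essentially the same strategy as the paper: each item is pinned down by an extremal (maximal/minimal) condition relative to the previously defined sets and relations. Two remarks. First, for $\mathcal{O}_{\cup}$ your characterization via ``$X$ is not a nontrivial disjoint union $Y\mathrel{\dot{\cup}}Z$ of members of $\mathcal{O}$'' (or ``$X\setminus Y\in\mathcal{O}$'') is mathematically the right property but is not, as written, a formula in the language of $\leq$ --- complement/disjoint-union is not available at this stage. The paper's fix is simpler and avoids the issue entirely: $\mathcal{O}_{\cup}$ is just the set of \emph{minimal} elements of $\mathcal{O}$, which works because $O_m\leq O_n$ forces $m=n$ and no multi-component member of $\mathcal{O}$ embeds into a single circle; you should replace your decomposition condition by this minimality. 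Second, your treatments of $M$ and $L$ are correct but slightly different in packaging: the paper takes $M(G)$ to be the maximal loopless $Y\leq G$ with no vertex-count side condition (harmless either way, since $M(G)$ is in fact the \emph{maximum} loopless digraph below $G$), and defines $L(G)$ as the maximal element of the $\mathfrak{M}$-class of $G$ rather than your minimal all-looped extension of $G$; both characterizations are valid, and your worry about ``spurious maximal solutions'' dissolves once one notes that any loopless $Y\leq G$ already embeds into $M(G)$, so the maximal element is unique. The remaining items ($\{F_n\}$ and the $E_n$-pairings) match the paper's proof.
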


\begin{proof} $\mathcal{O}_{\cup}$ is the set of digraphs $X\in \mathcal{D}$ for which $X\in \mathcal{O}$ but there is no $Y\in \mathcal{O}$ such that $Y<X$. 
The corresponding relation $\{(O_n,E_n): n\geq 2\}$ is definable with \eqref{yiteja}. \\
The set under (\ref{yxcnyxbmieuwqieo99}) consists of those $X\in\mathcal{D}$ for which $X<Y$ implies $(X,Y)\notin \mathfrak{E}$. 
The corresponding relation is defined as above.\\
(\ref{adubvcipqw}) is the set of pairs  $(X,Y)\in \mathcal{D}^2$ for which $Y$ is maximal with the conditions $Y\leq X$ and $L_1\nleq Y$. \\
$\mathfrak{M}$ is already given by a first-order definition. \\
(\ref{cvjnhdufasid}) is the set of pairs  $(X,Y)\in \mathcal{D}^2$ for which $Y$ is maximal with the property that $(X,Y)\in \mathfrak{M}$.
 \end{proof}

\begin{lemm}\label{238971875642} The following relation is definable:
$$\mathfrak{E}_+:=\{(E_n,E_m,E_{n+m}): n,m\in\mathbb{N}\}.$$
\end{lemm}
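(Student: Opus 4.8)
The plan is to express the ternary relation $\mathfrak{E}_+ = \{(E_n,E_m,E_{n+m}) : n,m\in\mathbb{N}\}$ using addition of vertex counts, which is naturally encoded by disjoint union on the empty digraphs. The key observation is that $E_n \mathrel{\dot{\cup}} E_m \cong E_{n+m}$, so if we can define the graph of disjoint union restricted to $\mathcal{E}$, we are done. Since disjoint union itself is not obviously first-order in $(\mathcal{D};\leq)$, I would instead characterize $E_{n+m}$ order-theoretically relative to $E_n$ and $E_m$: within the definable chain $\mathcal{E}$ (which carries a definable successor relation $E_i\prec E_{i+1}$ inside $\mathcal{E}$, and hence a definable linear order), $E_{n+m}$ is simply the element of $\mathcal{E}$ obtained by "iterating the successor $m$ times starting from $E_n$." The difficulty is that "$m$ times" is not directly first-order; the standard trick (following Je\v{z}ek--McKenzie) is to find a single digraph that simultaneously witnesses both the position $n$, the position $m$, and the position $n+m$.

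Concretely, I would proceed as follows. First, recall that $\mathcal{E}$ is definable, the relation $E_i \prec E_{i+1}$ within $\mathcal{E}$ is definable, and hence the linear order on $\mathcal{E}$ is definable. Second, I would use an auxiliary definable family that "counts" — the most natural candidate here is the set $\mathcal{O}_{\cup}=\{O_n : n\geq 2\}$ together with the already-established relation $\{(O_n,E_n):n\geq 2\}$ from Lemma~\ref{ppghhfjasd}, or alternatively $\{F_n:n\in\mathbb{N}\}$ with $\{(F_n,E_n):n\in\mathbb{N}\}$. The point is that for a circle $O_k$ and empty digraphs, an embedding $E_n \mathrel{\dot{\cup}} O_k \leq O_{n+k}$-type relationship can be leveraged; more robustly, consider that $E_{n+m}$ is characterized by: $E_n \leq E_{n+m}$, $E_m \leq E_{n+m}$, and $E_{n+m}$ is the smallest element $E_k$ of $\mathcal{E}$ such that there exist "disjoint" copies of $E_n$ and $E_m$ inside $E_k$ — but "disjoint" must be made first-order. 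To make disjointness first-order one can use a marker: $E_{n+m}$ is the least $E_k\in\mathcal{E}$ such that $F_n \mathrel{\dot{\cup}} F_m \leq F_k$ fails to be improvable, i.e. exploiting that $F_n\mathrel{\dot\cup}F_m$ has a definable relationship to $F_{n+m}$; since $\{F_n\}$ and the pairing $(F_n,E_n)$ are definable, and the disjoint union of two full digraphs $F_n \mathrel{\dot{\cup}} F_m$ embeds into $F_j$ iff $j\geq n+m$, one gets $F_{n+m}$ as $\min\{F_j : F_n\mathrel{\dot\cup}F_m \leq F_j\}$ — provided $F_n\mathrel{\dot\cup}F_m$ is definable from $F_n,F_m$, which again needs the disjoint-union graph.

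Given this circularity, the cleanest route — and the one I would actually write — is to define the graph of disjoint union $\{(G,G',G\mathrel{\dot\cup}G'):G,G'\in\mathcal{D}\}$ first, or at least its restriction to relevant classes, by the universal property: $H \cong G\mathrel{\dot\cup}G'$ iff $G\leq H$, $G'\leq H$, and $H$ is minimal such that it "contains disjoint images," where disjointness of the images is enforced by a definable auxiliary structure (attach via a definable constant like $A$ distinguishing the two parts, or use that in $\mathcal{E}$ everything is rigid so no identification can occur accidentally). Restricted to $\mathcal{E}$ this is particularly easy: $E_k = E_n\mathrel{\dot\cup}E_m$ iff $k=n+m$, and the condition $k=n+m$ is captured by: $E_n\leq E_k$, and the "complement" of a copy of $E_n$ in $E_k$ — definable as a maximal $E_j$ with $E_j\leq E_k$ and (using the position of $E_n$ in the chain) $j+n\leq k$ forced by minimality — equals $E_m$. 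The main obstacle, as flagged, is precisely converting "the number of vertices outside a subgraph" into a first-order condition without already having addition; I expect to resolve it exactly as Je\v{z}ek and McKenzie do, by using the linear order on the definable chain $\mathcal{E}$ together with a definable "subtraction via maximality" argument: $E_{n+m}$ is the unique $E_k$ such that $E_n\prec^n \cdot$ relation, restated as: $E_k$ is the least element of $\mathcal{E}$ above $E_n$ whose "gap from $E_n$ in the chain $\mathcal{E}$" is order-isomorphic (via a definable isomorphism of initial segments of the chain $\mathcal{E}$, which exists since $\mathcal{E}$ with $\prec$ is a definable copy of $(\mathbb{N},\mathrm{succ})$) to the initial segment below $E_m$. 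This last clause is first-order because it only quantifies over elements and a single partial map encoded as a digraph, both available in $\mathcal{D}'$, giving $\mathfrak{E}_+$ as required.
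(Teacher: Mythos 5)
You correctly isolate the crux --- turning ``$E_{n+m}$ is the disjoint union $E_n\mathrel{\dot{\cup}}E_m$'' into a first-order condition of the poset --- but the proposal never actually resolves it. Each concrete route you sketch is either circular by your own admission (defining $F_{n+m}$ as the least $F_j$ with $F_n\mathrel{\dot{\cup}}F_m\leq F_j$ presupposes the very disjoint union you are after), or rests on the closing claim that one can quantify over ``a definable isomorphism of initial segments of the chain $\mathcal{E}$.'' That last step is not available here: at this stage of the paper you only have the order language together with the few relations established so far, and the ability to quantify over maps encoded as digraphs is exactly the second-order power that the main theorem is meant to deliver, so invoking it now is circular. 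Moreover, if one really works only with the successor structure on the chain $\mathcal{E}$, addition is provably not first-order definable from successor alone, so some genuinely new use of the ambient poset is unavoidable.

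The missing idea is a concrete gadget that forces disjointness. The paper takes two digraphs definable from $E_i$ and $E_j$ that cannot overlap: $L_i$ (loops only) and $M(F_j)$ (the full loop-free digraph, obtained via Lemma \ref{ppghhfjasd}), together with the forbidden pattern $F_j^*$, the unique cover of $M(F_j)$ with $L_1\leq F_j^*$, i.e.\ $M(F_j)$ with one loop added. Any $Q$ satisfying $L_i\leq Q$, $M(F_j)\leq Q$ and $F_j^*\nleq Q$ must keep the $j$ vertices of the embedded clique loop-free, hence disjoint from the $i$ looped vertices witnessing $L_i\leq Q$; the minimal such $Q$ is $L_i\mathrel{\dot{\cup}}M(F_j)$, which has exactly $i+j$ vertices, and $E_{i+j}$ is then read off via $\mathfrak{E}$. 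Your instinct to bring in the full digraphs $F_n$ was close, but the essential trick --- pairing a loop-based marker with a loop-free clique and excluding their overlap by a single non-embeddability condition --- is absent from the proposal, so as written it has a genuine gap.
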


\begin{proof} The relation $\mathfrak{E}_+$ consists of the triples $(X,Y,Z)\in\mathcal{D}^3$ that satisfy the following conditions. 
$X,Y\in\mathcal{E}$, meaning $X=E_i$ and $Y=E_j$ for some $i,j\in\mathbb{N}$. 
With Lemma \ref{ppghhfjasd}, $M(F_j)$ can be defined (with $E_j$). 
Let $F_j^*$ denote the digraph the we get from $M(F_j)$ by adding one loop. 
This is the only digraph $W\in\mathcal{D}$ for which $M(F_j)\prec W$ and $L_1\leq W$. 
Now the digraph $L_i  \mathrel{\dot{\cup}} M(F_j)$ is definable as the digraph $Q\in \mathcal{D}$ which is minimal with the conditions $L_i\leq Q$, $M(F_j)\leq Q$ and $F_j^*\nleq Q$. Finally, $Z\in \mathcal{E}$ such that $(Z,L_i  \mathrel{\dot{\cup}} M(F_j))\in \mathfrak{E}$.
 \end{proof}

\begin{lemm} The following relation is definable:
\begin{equation}\label{hatluevp}
\{(E_n,E_m):1\leq n<m\leq 2n\}.
\end{equation}
\end{lemm}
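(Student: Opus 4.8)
The plan is to reduce the relation \eqref{hatluevp} directly to the relation $\mathfrak{E}_+$ already shown definable in Lemma \ref{238971875642}. The point is simply that the condition $1\le n<m\le 2n$ is equivalent to saying that $m=n+k$ for some integer $k$ with $1\le k\le n$. So I would propose the following first-order definition: a pair $(X,Y)$ belongs to \eqref{hatluevp} if and only if $X\in\mathcal{E}$, $Y\in\mathcal{E}$, and there exists $Z$ with $Z\in\mathcal{E}$, $Z\le X$, and $(X,Z,Y)\in\mathfrak{E}_+$. Since $\mathcal{E}$, the order $\le$, and $\mathfrak{E}_+$ are all definable, this is a legitimate first-order formula of $\mathcal{D}'$.

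To check correctness, write $X=E_n$ and $Y=E_m$. A witness $Z$ is necessarily some $E_k$ with $k\ge 1$ (because $Z\in\mathcal{E}$ and $E_1$ is the least element of $\mathcal{E}$), with $k\le n$ (because $E_k\le E_n$ is equivalent to $k\le n$ for empty digraphs), and with $m=n+k$ (because $(E_n,E_k,Y)\in\mathfrak{E}_+$ forces $Y=E_{n+k}$). Hence $n<m=n+k\le 2n$. Conversely, given $1\le n<m\le 2n$, put $k:=m-n$; then $1\le k\le n$, so $Z:=E_k$ witnesses membership of $(E_n,E_m)$. This establishes the equivalence in both directions.

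There is essentially no obstacle here beyond this bookkeeping: the genuine content lies in the definability of $\mathfrak{E}_+$, proved in Lemma \ref{238971875642}. The only subtlety worth flagging is the handling of the two endpoint inequalities — the strict inequality $n<m$ comes from the absence of an ``$E_0$'' in $\mathcal{E}$, so the witness $Z$ always supplies at least one vertex, while the bound $m\le 2n$ comes from the requirement $Z\le X$.
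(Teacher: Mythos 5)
Your proof is correct and rests on the same key ingredient as the paper's, namely the addition relation $\mathfrak{E}_+$ from Lemma \ref{238971875642}; the paper merely phrases it slightly differently, first defining $E_{2n}$ by $(E_n,E_n,E_{2n})\in\mathfrak{E}_+$ and then requiring $Y\in\mathcal{E}$ with $E_n<Y\leq E_{2n}$, whereas you existentially quantify the summand $Z=E_{m-n}$ with $Z\leq X$. The two formulations are interchangeable, and your verification of both directions is sound.
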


\begin{proof} The relation is the set of those pairs $(X,Y)\in\mathcal{D}^2$ which satisfy the following conditions. For $X\in \mathcal{E}$, meaning $X=E_n$, we can define  
$E_{2n}$ to be the element from the set $\mathcal{E}$ for which $(E_n,E_n,E_{2n})\in\mathfrak{E}_+$. Finally, $Y\in \mathcal{E}$ and $E_n<Y\leq E_{2n}$.
 \end{proof}

\begin{lemm}\label{983489887772543214} Let $O_n^*:=O_{n+1} \mathrel{\dot{\cup}} O_{n+2} \mathrel{\dot{\cup}} \dots \mathrel{\dot{\cup}} O_{2n}$. The relation
\begin{equation} \label{gmcuyalu}
\{(O_n^*,E_n): n\in\mathbb{N}\}
\end{equation}
and the set $\{O_n^*:n\in\mathbb{N}\}$
are definable. 
\end{lemm}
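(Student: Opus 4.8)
The plan is to isolate the digraphs $O_n^*$ by the same strategy used for $\mathcal{O}$ and $\mathcal{O}_\cup$: characterize a larger definable class built from disjoint unions of circles subject to size constraints encoded via the $E_n$-parameters, then extract the $O_n^*$ by a maximality/minimality condition. The key point is that $O_n^* = O_{n+1}\mathrel{\dot{\cup}}\cdots\mathrel{\dot{\cup}}O_{2n}$ has exactly $\binom{n+1}{}+\cdots+2n = \tfrac{3n^2+n}{2}$ vertices, so the total vertex count of $O_n^*$ is a definable function of $n$ once we can add up $n+1, n+2, \dots, 2n$; but more usefully, $O_n^*$ is distinguished among disjoint unions of $n$ distinct circles of sizes in $[n+1,2n]$ by being the \emph{unique} one that uses \emph{all} of them, i.e.\ is maximal.

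First I would work inside $\mathcal{O}$ (definable by the earlier lemma) and its ``connected'' part $\mathcal{O}_\cup = \{O_k : k\geq 2\}$ (definable by Lemma~\ref{ppghhfjasd}), together with the relation $\{(O_k,E_k):k\geq 2\}$. Given $E_n$, the relation \eqref{hatluevp} lets me speak of all $E_k$ with $n < k \le 2n$, hence of all circles $O_k$ with $n+1 \le k \le 2n$ via the pairing $\{(O_k,E_k)\}$ and the cover relation inside $\mathcal{E}$ (to pass from $E_n$ to $E_{n+1}$). Now consider $X \in \mathcal{O}$ subject to: every weakly connected component of $X$ is a circle $O_k$ with $n+1\le k\le 2n$ (expressible: for each component $C$, using that $C\in\mathcal{O}_\cup$ and its associated $E_{|C|}$ satisfies $E_n < E_{|C|}\le E_{2n}$), and no two components are isomorphic (expressible: $X$ has no subgraph $O_k\mathrel{\dot{\cup}}O_k$ — and ``$O_k\mathrel{\dot{\cup}}O_k$'' is definable from $O_k$ as its unique appropriate cover-type configuration, or simply: $O_{2k}\nleq$ fails to force it, so instead phrase it as maximal multiplicity one). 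Among all such $X$, the digraph $O_n^*$ is the $\leq$-maximum, since it is the disjoint union of \emph{all} circles of the allowed sizes with multiplicity one; any other such $X$ omits some size and is strictly below $O_n^*$. This gives a first-order definition of $\{(O_n^*, E_n) : n\in\mathbb{N}\}$, and then $\{O_n^* : n\in\mathbb{N}\}$ is its first projection, i.e.\ the set of $X$ for which there exists $E_n\in\mathcal{E}$ with $(X,E_n)$ in that relation.

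The main obstacle I anticipate is expressing ``distinct components'' / ``multiplicity one'' cleanly in the poset language: $(\mathcal{D};\le,A)$ cannot directly count components, so I must define, for each circle $O_k$ (which is available via $\mathcal{O}_\cup$ and the pairing with $E_k$), the digraph $O_k\mathrel{\dot{\cup}}O_k$. This should be doable as the minimal digraph $Q\in\mathcal{O}$ with $O_k\le Q$ that is \emph{not} itself a single circle and into which $O_{k'}$ does not embed for $k' > k$ with $k'$ in the relevant range — or more robustly, as the minimal $Q\in\mathcal{O}$ with $O_k < Q$ and $O_{k+1}\nleq Q$; one must check this pins down $O_k\mathrel{\dot{\cup}}O_k$ and not, say, $O_k\mathrel{\dot{\cup}}O_j$ for small $j$, which it does once we also require all components to have size $\ge k$, i.e.\ $E_{k}\le E_{|C|}$ for every component. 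Once $O_k\mathrel{\dot{\cup}}O_k$ is in hand for every relevant $k$, forbidding all of these as subgraphs of $X$ enforces multiplicity one, the maximality argument closes the definition, and the remaining verifications are the routine check that the described $\leq$-maximum is exactly $O_n^*$.
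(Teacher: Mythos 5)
Your overall strategy --- restrict to $\mathcal{O}$, use \eqref{hatluevp} to name the circles $O_{n+1},\dots,O_{2n}$, and pin down $O_n^*$ by an extremality condition --- is essentially the paper's, but you chose the opposite extremum, and that choice creates all the difficulty you then have to fight. The paper defines the relation by taking $X\in\mathcal{O}$ \emph{minimal} with the property that $O_i\leq X$ for every $O_i\in\mathcal{O}_{\cup}$ with $(E_n,E_i)\in\eqref{hatluevp}$. Since a circle embeds into a disjoint union of circles only as a whole component ($O_i\leq O_j$ iff $i=j$), any such $X$ contains each of $O_{n+1},\dots,O_{2n}$ as a component, and minimality then automatically forces multiplicity one and excludes extra components; no upper bound on component sizes and no ``distinct components'' predicate are needed. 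Your maximality version, by contrast, genuinely requires both the size restriction and the multiplicity-one condition (otherwise arbitrarily many copies of $O_{n+1}$ destroy the existence of a maximum), which is exactly the obstacle you single out.

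On that obstacle: your proposed definition of $O_k\mathrel{\dot{\cup}}O_k$ as \emph{the} minimal $Q\in\mathcal{O}$ with $O_k<Q$, $O_{k+1}\nleq Q$, and all components of size at least $k$ does not pin it down. The digraph $O_k\mathrel{\dot{\cup}}O_{k+2}$ also belongs to that class, is minimal in it (any member of the class embedding into it must already equal it), and is incomparable with $O_k\mathrel{\dot{\cup}}O_k$; so ``the minimal $Q$'' is not well defined. A correct definition exists --- take $Q\in\mathcal{O}$ with exactly $2k$ vertices (via $\mathfrak{E}$) such that $O_k$ is the only member of $\mathcal{O}_{\cup}$ embeddable into $Q$, which is precisely how the paper later defines $O_{i,i}$ in Lemma~\ref{74678719187892643} --- so your route can be repaired, but switching to the minimality formulation makes the entire detour unnecessary.
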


\begin{proof} The relation (\ref{gmcuyalu}) can be defined as the set of pairs $(X,Y)\in \mathcal{D}^2$ satisfying the following conditions.
$Y\in \mathcal{E}$, meaning $Y=E_n$. $X$ satisfies $X\in \mathcal{O}$ and is minimal with the following property: for all $O_i \in \mathcal{O}_{\cup}$ for which $(E_n,E_i)\in (\ref{hatluevp})$ holds, $O_i\leq X$. \\ 
With the relation (\ref{gmcuyalu}), the set is easily defined the usual way.
 \end{proof}

\begin{lemm} The following relation is definable:
$$\{(X,E_n): \; 2\leq n, \;\; X\in\mathcal{O}_n^{\to}\}.  $$
\end{lemm}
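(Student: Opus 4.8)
The plan is to show that, for $n\ge 2$,
$$\mathcal{O}_n^{\to}=\{X\in\mathcal{D}:\ O_n\prec X,\ |V(X)|=n,\ L_1\nleq X\},$$
and then to read off a first-order formula from the ingredients already at hand: the cover relation $\prec$ (definable in $(\mathcal{D},\leq)$, hence in $\mathcal{D}'$); the set $\mathcal{O}_{\cup}=\{O_n:n\geq2\}$ together with the relation $\{(O_n,E_n):n\geq2\}$ (Lemma~\ref{ppghhfjasd}); the digraph $L_1$ (Lemma~\ref{vbncxmze}); and the relation \eqref{yiteja}, which holds of a pair $(G,E_m)$ precisely when $|V(G)|=m$.

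Granting the displayed equality, the relation in question equals
$$\bigl\{(X,E_n):\ \exists O\in\mathcal{O}_{\cup}\ \bigl((O,E_n)\in\{(O_m,E_m):m\geq2\}\ \wedge\ O\prec X\ \wedge\ (X,E_n)\in\eqref{yiteja}\ \wedge\ L_1\nleq X\bigr)\bigr\},$$
which is first-order over $\mathcal{D}'$. Note that the existence of an $O\in\mathcal{O}_{\cup}$ with $(O,E_n)$ in the relation of Lemma~\ref{ppghhfjasd} automatically forces $n\geq2$ and pins $O$ down as $O_n$.

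So it remains to verify the equality. The inclusion ``$\subseteq$'' is essentially free: every $X\in\mathcal{O}_n^{\to}$ has exactly $n$ vertices, contains no loop (so $L_1\nleq X$), and satisfies $X\succ O_n$, the last being exactly the remark recorded after Definition~\ref{defOnto}. For ``$\supseteq$'', suppose $O_n\prec X$ with $|V(X)|=n$ and $L_1\nleq X$. Since $|V(X)|=n$, every embedding $O_n\to X$ is a bijection on vertices, so up to isomorphism $E(O_n)\subseteq E(X)$, and $O_n\ne X$ gives $E(O_n)\subsetneq E(X)$. If $X$ had at least two edges beyond those of $O_n$, then picking one such edge $e$ and forming the digraph $Y$ on $V(X)$ with edge set $E(O_n)\cup\{e\}$ would yield $O_n< Y< X$, contradicting that $X$ covers $O_n$. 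Hence $E(X)=E(O_n)\cup\{e\}$ for a single edge $e$, and $L_1\nleq X$ forces $e$ not to be a loop; thus $X$ arises from $O_n$ by adding a non-loop edge, i.e.\ $X\in\mathcal{O}_n^{\to}$.

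The only real content here is the ``$\supseteq$'' inclusion, and within it the observation that an $n$-vertex digraph covering $O_n$ can exceed $O_n$ by precisely one edge; the rest is just assembling definitions established earlier in this section.
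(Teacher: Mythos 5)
Your proof is correct and takes essentially the same approach as the paper: both characterize the relation by requiring $O_n\prec X$, $L_1\nleq X$, and that $X$ have exactly $n$ vertices (you express the last condition via \eqref{yiteja}, the paper via $(X,O_n)\in\mathfrak{E}$ --- these are the same constraint). Your explicit verification that an $n$-vertex digraph covering $O_n$ exceeds it by exactly one non-loop edge is simply the justification the paper leaves implicit.
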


\begin{proof} The relation consists of those pairs $(X,Y)\in\mathcal{D}^2$ that satisty the following conditions. 
$Y\in \mathcal{E}$ and $E_2\leq Y$, meaning $Y=E_n$, where $2\leq n$. 
$O_n\prec X$, $L_1\nleq X$, and $(X,O_n)\in\mathfrak{E}$.
 \end{proof}

\begin{definition}\label{13541235} Let $1<i,j$ be integers and let us consider the circles $O_i, O_j$ and $E_1$ with
$$ V(O_i)=\{v_1,\dots,v_i\},\;\; V(O_j)=\{v^1,\dots, v^j\}, \;\; V(E_1)=\{u\}.$$
Let  $\male_{i,j}^L$ denote the following digraph:
$$V(\male_{i,j}^L):=V(O_i)\cup V(O_j) \cup V(E_1), \;\; E(\male_{i,j}^L):=E(O_i)\cup E(O_j)\cup \{(v_1,u),(v^1,u),(u,u)\}.$$
\end{definition}

\begin{lemm}\label{apoowjsbc} 
 Let 
$$O_{n,L}^*:=O_{n+1,L} \mathrel{\dot{\cup}} O_{n+2,L} \mathrel{\dot{\cup}} \dots \mathrel{\dot{\cup}} O_{2n,L}.$$

The following sets and relations are definable:
\begin{equation}\label{telgapnn}
\{(\male_n,E_n): n\geq2\}, \;\;\; \{\male_n: n\geq2\},
\end{equation}
\begin{equation}\label{bahhstz}
\{(\male_n^L, E_n): n\geq2\}, \;\; \{\male_n^L: n\geq2\},
\end{equation}
\begin{equation}\label{vlapkaaa}
\{(\male_{i,j}^L,E_i,E_j):1<i,j,\; i\neq j\}, \;\; \{\male_{i,j}^L:1<i,j,\; i\neq j\},
\end{equation}
\begin{equation}\label{98782133567236}
\{O_{n,L}: n\geq2\}, \;\;\; \{(O_{n,L},E_n):n\geq2 \},
\end{equation}
\begin{equation}\label{745673682987123}
\{O_{n,L}^*:n\in\mathbb{N}\}, \;\;\; \{(O_{n,L}^*,E_n):n\in\mathbb{N} \}.
\end{equation}
\end{lemm}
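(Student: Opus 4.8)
The plan is to build all the digraphs and relations in Lemma~\ref{apoowjsbc} incrementally, each time exploiting the already-definable objects $E_n$, $L_n$, $O_n$, $F_n$, $M(\cdot)$, $L(\cdot)$, $O_n^*$, the relation $\mathfrak{E}$, and the ``cover'' relation $\prec$ as bookkeeping devices for the parameter $n$. The recurring trick is the one used in Lemma~\ref{238971875642}: a disjoint union $G \mathrel{\dot{\cup}} H$ of two already-definable digraphs with no common ``subpart'' is picked out as the \emph{minimal} $Q$ with $G \le Q$, $H \le Q$ and some forbidden configuration $\nleq Q$; and a specific modification (adding a loop, adding one pendant edge, adding one extra vertex) is picked out via $\prec$ together with a loop/edge-count condition. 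Throughout, ``definable with $E_n$'' means the pair $(\text{object},E_n)$ lies in a definable binary relation, so that the final sets arise by projecting out $E_n$ the usual way.

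I would proceed in the order the items are listed. First, \eqref{telgapnn}: $\male_n$ is obtained from $O_n$ by attaching one non-loop edge at a vertex, so $\male_n \succ O_n$; among the covers of $O_n$, the ones in $\mathcal{O}_n^{\to}$ (already handled by the previous lemma) are distinguished from $\male_n$ by weak connectedness / vertex count — more concretely $\male_n$ is the unique cover $X$ of $O_n$ with $L_1 \nleq X$, $(X,O_n)\notin\mathfrak{E}$ (it has one more vertex), and $A\nleq X$ while $A^T \le X$ (the new vertex is the head of exactly the one new edge, and is a head of no other edge — this is exactly the asymmetry that $A$ vs.\ $A^T$ detects). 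Pair it with $E_n$ via the relation $(X,O_n)$ and the already-definable $(O_n,E_n)$. Next, \eqref{bahhstz}: $\male_n^L$ is the unique $W$ with $\male_n \prec W$ and $L_1 \le W$, so it is definable from $\male_n$. Then \eqref{98782133567236}: $O_{n,L}$ is the unique cover $W$ of $O_n$ with $L_1 \le W$ and $(W,O_n)\in\mathfrak{E}$ (same vertex set), and $M(O_{n,L}) = O_n$ gives an alternative handle via \eqref{adubvcipqw}; pair with $E_n$ using $(O_n,E_n)$.

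For \eqref{vlapkaaa} I would imitate Lemma~\ref{238971875642}'s assembly but with circles in place of the full digraph: given $E_i,E_j$ (hence $O_i$, $O_j$, and their disjoint union $O_i \mathrel{\dot{\cup}} O_j$, which is definable as the minimal $Q$ with $O_i\le Q$, $O_j\le Q$ and the two-circles-sharing-a-vertex configuration $\nleq Q$), form $\male_{i,j}^L$ as the digraph obtained from $O_i \mathrel{\dot{\cup}} O_j \mathrel{\dot{\cup}} E_1$ by adding the three edges $(v_1,u),(v^1,u),(u,u)$: it is the unique $X$ with $O_i \mathrel{\dot{\cup}} O_j \le X$, exactly one extra vertex, exactly one loop (on that extra vertex, detected by $L_1\le X$, $L_2 \nleq X$, and $M(X) \succ$-chain back to $O_i\mathrel{\dot{\cup}} O_j$ with two edges added), and that extra vertex being a common head of one edge from each circle — the latter again read off from copies of $A/A^T$. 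The condition $i\neq j$ is needed exactly so that $O_i \mathrel{\dot{\cup}} O_j$ is recoverable from $X$ (otherwise $2O_i$ has automorphisms that would spoil the minimality description); pair the object with $(E_i,E_j)$ through $(O_i,E_i)$ and $(O_j,E_j)$. Finally \eqref{745673682987123}: $O_{n,L}^* = O_{n+1,L}\mathrel{\dot{\cup}}\cdots\mathrel{\dot{\cup}} O_{2n,L}$ is got from $O_n^* = O_{n+1}\mathrel{\dot{\cup}}\cdots\mathrel{\dot{\cup}} O_{2n}$ by adding one loop on each component, equivalently $M(O_{n,L}^*) = O_n^*$ and $O_{n,L}^*$ is the maximal $Y$ with $(O_n^*,Y)\in$\eqref{cvjnhdufasid}\,-\,style relation restricted to ``at most one loop per component'', which can be phrased as: $M(Y) = O_n^*$, $L_1 \le Y$, and $Y$ is minimal such that every component of $Y$ carries a loop — or, cleanest, $Y$ is the unique digraph with $M(Y) = O_n^*$ in which $O_{2,L}\mathrel{\dot{\cup}} O_{2,L}$-type double structure is absent and $L_1$'s are as numerous as components, using that the number of components of $O_n^*$ is already pinned down by $E_n$. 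Pair with $E_n$ via $(O_n^*,E_n)$ from Lemma~\ref{983489887772543214}.

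The main obstacle, in each of these items, is the same and is purely combinatorial rather than logical: giving an \emph{unambiguous} minimality/covering description, i.e.\ checking that the forbidden-subgraph list together with the $\mathfrak{E}$/$\mathfrak{M}$ parameter really does isolate a single isomorphism type and not a family. This is delicate precisely for the loop-decorated disjoint unions in \eqref{745673682987123} and for $\male_{i,j}^L$, where one must rule out a smaller/larger competitor that accidentally satisfies all listed inequalities — the paper's standard remedy is to add just enough forbidden configurations (all of which are themselves already-definable small digraphs, e.g.\ $O_{2,L}\mathrel{\dot{\cup}} O_{2,L}$, $\male_2^L$, and copies of $A$, $A^T$) to kill every such competitor. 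I expect the verification to be routine once the forbidden lists are chosen correctly, but choosing them correctly is where all the care goes; the logical form of the definitions (first-order, using only previously established definable relations) is immediate.
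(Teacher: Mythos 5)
Your overall strategy---incremental cover/forbidden-subgraph characterizations, keyed to the parameter $n$ via $\mathfrak{E}$ and the already-definable $O_n$, $O_n^*$, $L(\cdot)$, $M(\cdot)$---is the same as the paper's, but your very first step fails as stated. $\male_n$ does \emph{not} cover $O_n$: the digraph $O_n\mathrel{\dot{\cup}}E_1$ lies strictly between them, since $\male_n$ has one more vertex \emph{and} one more edge than $O_n$. Consequently no cover of $O_n$ satisfies your list of conditions: the only cover of $O_n$ with a different vertex count (your condition $(X,O_n)\notin\mathfrak{E}$) is $O_n\mathrel{\dot{\cup}}E_1$, which does not contain $A^T$, while every cover of $O_n$ containing $A^T$ has the same vertex count as $O_n$. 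So your formula for \eqref{telgapnn} defines the empty set, and since $\male_n^L$ and $\male_{i,j}^L$ are built on top of $\male_n$, the error propagates. The paper's remedy is to go up a two-step chain: there exists $Z$ with $O_n\prec Z\prec X$, together with $(E_{n+1},X)\in\mathfrak{E}$, $L_1\nleq X$, no element of $\mathcal{O}_n^{\to}$ embeddable into $X$, and $A^T\leq X$.

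A second, smaller gap: ``$\male_n^L$ is the unique $W$ with $\male_n\prec W$ and $L_1\leq W$'' is not a valid characterization, because adding a loop to any circle vertex of $\male_n$ also yields a cover of $\male_n$ containing $L_1$; the paper adds the condition $O_{n,L}\nleq W$ to exclude these. Your treatment of \eqref{vlapkaaa} is likewise underspecified: detecting ``a common head of one edge from each circle'' via copies of $A$ does not force the two in-edges to come from the two \emph{different} circles nor to land on the looped extra vertex; the paper instead pins $\male_{i,j}^L$ down by a three-cover chain above $O_i\mathrel{\dot{\cup}}O_j\mathrel{\dot{\cup}}E_1$ together with $\male_i^L\leq X$, $\male_j^L\leq X$, $O_{i,L}\nleq X$, $O_{j,L}\nleq X$. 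You do flag the choice of forbidden configurations as the delicate point, and your items \eqref{98782133567236} and \eqref{745673682987123} are handled essentially as in the paper, but the $\male_n$ step as written is a wrong structural claim rather than a missing forbidden subgraph, so the proof does not go through without the correction above.
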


\begin{proof} The relation (\ref{telgapnn}) consists of those pairs $(X,Y)\in \mathcal{D}^2$ that satisfy the following. 
$Y\in \mathcal{E}$, meaning $Y=E_n$. 
There exists $Z\in \mathcal{D}$ for which $O_n\prec Z\prec X$, $(E_{n+1}, X)\in \mathfrak{E}$, and $L_1\nleq X$. There exists no $Z\in \mathcal{O}_n^{\to}$ for which $Z\leq X$. Finally, $A^T\leq X$. The corresponding set is easily defined using the relation we just defined. \\
The set under (\ref{98782133567236}) consists of those digraphs $X\in \mathcal{D}$ for which there exists $O_n\in\mathcal{O}_{\cup}$ such that $O_n\prec X$ and $L_1\leq X$. 
The corresponding relation is easily defined. \\
The relation (\ref{bahhstz}) consists of those pairs $(X,Y)\in \mathcal{D}^2$ that satisfy the following. $Y\in \mathcal{E}$, meaning $Y=E_n$. With the relation (\ref{telgapnn}), $\male_n$ is definable. Now $X$ is determined by the following properties: $\male_n\prec X$, $L_1\leq X$ and $O_{n,L}\nleq X$. 
The corresponding set is easily defined using the relation we just defined. \\
The relation (\ref{vlapkaaa}) consists of those triples $(X,Y,Z)\in\mathcal{D}^3$ that satisfy the following. $Y,Z\in\mathcal{E}$ such that $E_2\leq Y,Z$ and $Y\neq Z$, meaning $Y=E_i$, $Z=E_j$ for some $1<i,j$, $i\neq j$. 
Now $O_i \mathrel{\dot{\cup}} O_j$ is the digraph $W\in\mathcal{D}$ determined by $W\in \mathcal{O}$, $(W,E_{i+j})\in\mathfrak{E}$ and $O_i, O_j\leq  W$. 
$O_i \mathrel{\dot{\cup}} O_j \mathrel{\dot{\cup}} E_1$  is the digraph $W$ determined by $O_i \mathrel{\dot{\cup}} O_j \prec W$ and $(O_i \mathrel{\dot{\cup}} O_j, W)\notin \mathfrak{E}$. 
Finally, $X$ is defined by:
$$\exists W_1,W_2: O_i \mathrel{\dot{\cup}} O_j \mathrel{\dot{\cup}} E_1 \prec W_1\prec W_2\prec X, \;\; (X,O_i \mathrel{\dot{\cup}} O_j \mathrel{\dot{\cup}} E_1) \in \mathfrak{E},$$
$$ L_1\leq X, \;\; O_{i,L}\nleq X, \;\; O_{j,L}\nleq X, \;\; \male_i^L\leq X, \;\; \male_j^L\leq X.$$
The corresponding set is easily defined using the relation we just defined.\\
The relation $\{(O_{n,L}^*,E_n):n\in\mathbb{N} \}$ consists of those pairs $(X,Y)\in \mathcal{D}^2$ that satisfy the following conditions. $Y\in\mathcal{E}$, meaning $Y=E_n$. For $X$, the following properties hold:
\begin{itemize}
\item $O_n^*\leq X$ and $(X,O_n^*)\in\mathfrak{E}$,
\item  $O_i\leq O_n^* \Rightarrow ( \; Y\in \mathcal{O}_i^{\to} \Rightarrow Y\nleq X),$
\item  $O_i\leq O_n^* \Rightarrow \male_i\nleq X,$
\item $O_i\leq O_n^* \Rightarrow O_{i,L}\leq X,$
\item $L_{n+1}\nleq X.$
\end{itemize}
With the relation we just defined the corresponding set is easily defined.
 \end{proof}

\begin{definition}\label{756728767198470} Let us denote the vertices of $O_i$ and $O_j$ with
$$ V(O_i)=\{v_1,\dots,v_i\},\;\; V(O_j)=\{v^1,\dots, v^j\}. $$
Let $O_{i\to j}$ denote the digraph 
$$V(O_{i\to j})=V(O_i)\cup V(O_j), \;\;\; E(O_{i\to j})=E(O_i)\cup E(O_j) \cup \{(v_1,  v^1)\}.$$
\end{definition}

\begin{lemm} The following relation and set are definable:
\begin{equation}\label{haywskz}
\{(O_{i\to j}, E_i, E_j): i,j\geq2\}, \;\;\; \{O_{i\to j}: i,j\geq2\}
\end{equation}
\end{lemm}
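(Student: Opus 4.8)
The plan is to mimic the previous lemmas and describe $O_{i\to j}$, relative to the constants $E_i$ and $E_j$, purely by embeddability conditions involving digraphs whose definability from $E_i,E_j$ is already in hand. Concretely, the relation in (\ref{haywskz}) will be the set of triples $(X,Y,Z)\in\mathcal{D}^3$ with $Y,Z\in\mathcal{E}$ and $E_2\leq Y$, $E_2\leq Z$ (so $Y=E_i$, $Z=E_j$ with $i,j\geq2$), subject to further conditions on $X$ expressed through $O_i$, $O_j$, $\male_i$ and the families $\mathcal{O}_i^{\to}$, $\mathcal{O}_j^{\to}$; the set $\{O_{i\to j}:i,j\geq2\}$ is then the projection $\{X:\exists Y\,\exists Z\ (X,Y,Z)\in(\ref{haywskz})\}$, as usual.

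First I would define $O_i\mathrel{\dot{\cup}}O_j$ from $E_i$ and $E_j$ \emph{uniformly in $i$ and $j$}: Lemma~\ref{apoowjsbc} constructs $O_i\mathrel{\dot{\cup}}O_j$ only under the hypothesis $i\neq j$, so a small additional argument is needed for $i=j$. Using $\mathfrak{E}_{+}$ (Lemma~\ref{238971875642}) to define $E_{i+j}$, and using $\mathcal{O}_{\cup}$ together with the relation $\{(O_n,E_n):n\geq2\}$ of Lemma~\ref{ppghhfjasd} to define $O_i$ and $O_j$, the digraph $O_i\mathrel{\dot{\cup}}O_j$ is the unique $W$ satisfying $W\in\mathcal{O}$, $(W,E_{i+j})\in\mathfrak{E}$, $O_i\leq W$, $O_j\leq W$ and $O_k\nleq W$ for every $O_k\in\mathcal{O}_{\cup}$ with $O_k\neq O_i$ and $O_k\neq O_j$. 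The last clause forces every weakly connected component of $W$ (a circle, since $W\in\mathcal{O}$) to have size $i$ or $j$, and then the vertex count $i+j$ forces exactly one component of each size when $i\neq j$ and two copies of $O_i$ when $i=j$.

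Next I would single out $O_{i\to j}$ as a particular cover of $O_i\mathrel{\dot{\cup}}O_j$. If $O_i\mathrel{\dot{\cup}}O_j\prec X$ and $(X,E_{i+j})\in\mathfrak{E}$, then $X$ is obtained from $O_i\mathrel{\dot{\cup}}O_j$ by adding exactly one edge. Imposing $L_1\nleq X$ excludes a loop; imposing that no member of $\mathcal{O}_i^{\to}$ and no member of $\mathcal{O}_j^{\to}$ embeds into $X$ (recall that $\{(Z,E_n):2\leq n,\ Z\in\mathcal{O}_n^{\to}\}$ is already known to be definable) excludes the added edge lying inside the $i$-cycle or inside the $j$-cycle. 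One must also check the converse, that no member of $\mathcal{O}_i^{\to}$ or $\mathcal{O}_j^{\to}$ embeds into $O_{i\to j}$ itself: the only directed cycles of $O_{i\to j}$ are its two original cycles (the single extra edge cannot lie on a directed cycle, as there is no edge back from the $j$-cycle to the $i$-cycle), so the $O_i$-part of a would-be embedded member of $\mathcal{O}_i^{\to}$ is forced onto the $i$-cycle, whose own vertex set carries no edge beyond the cycle edges, leaving no room for the extra edge. After these exclusions, $X$ is obtained by adding a single edge between the two components, hence $X\cong O_{i\to j}$ or $X\cong O_{j\to i}$, and the final clause $\male_i\leq X$ selects $O_{i\to j}$.

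The main obstacle is precisely justifying that last clause, i.e. that the \emph{ordered} pair $(E_i,E_j)$ suffices to tell $O_{i\to j}$ and $O_{j\to i}$ apart. The key combinatorial fact is: $\male_i$ — an $O_i$ with one pendant out-edge — embeds into $O_{i\to j}$ by rotating the $i$-cycle so that the pendant source maps to the tail of the inter-component edge and the pendant vertex maps to its head, whereas $\male_i$ does not embed into $O_{j\to i}$ when $i\neq j$, because in $O_{j\to i}$ the unique directed $i$-cycle is the $i$-component, no vertex of which has an edge leaving that component (the inter-component edge points into it), so the pendant vertex of $\male_i$ cannot be placed; when $i=j$ we have $O_{i\to j}\cong O_{j\to i}$ and the clause is harmless. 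The remaining verifications — that loops, intra-cycle edges and the reversed inter-component edge are exactly the cases to exclude, and that each is indeed killed by one of the clauses above — are routine given the digraphs and relations whose definability the preceding lemmas supply, as is the passage from the relation (\ref{haywskz}) to the corresponding set.
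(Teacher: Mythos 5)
Your proof is correct and follows essentially the same route as the paper: identify $O_i\mathrel{\dot{\cup}}O_j$ via $\mathcal{O}$, $\mathfrak{E}$ and $\mathcal{O}_{\cup}$, pass to a cover with the same vertex count, and use $\male_i\leq X$ to pin down the orientation of the added edge. The only difference is that your explicit exclusion clauses ($L_1\nleq X$ and the $\mathcal{O}_i^{\to}$, $\mathcal{O}_j^{\to}$ conditions) are redundant in the paper's version, since $\male_i\leq X$ already rules out loops and intra-cycle chords by the same argument you give for ruling out the reversed inter-component edge.
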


\begin{proof} The relation (\ref{haywskz}) consists of those triples $(X,Y,Z)\in \mathcal{D}^3$ for which the following conditions hold. $Y,Z\in \mathcal{E}$ satisfy $E_2\leq Y,Z$, meaning $Y=E_i$ and $Z=E_j$, where $i,j\geq2$. $(X,E_{i+j})\in \mathfrak{E}$ and $W\prec X$, where $W\in \mathcal{O}$ is such that $(W,X)\in \mathfrak{E}$ and precisely $O_i$ and $O_j$ are embeddabe into $W$ from the set $\mathcal{O}_{\cup}$ (here $i=j$ is possible). Finally, $\male_i\leq X$. The set is easily defined using the relation. 
 \end{proof}

The proof of the crucial Lemma \ref{fiduhweori} requires a lot of nontrivial preparation which we begin here.

\begin{definition}\label{7358712837283791} Let $\mathcal{W}(G)$ denote the set of weakly connected components of $G$. 
\end{definition}

\begin{definition}\label{7538761239874} Let
$$G \sqsubseteq G' \Leftrightarrow M(G)\leq M(G'), \; \text{ and }\;  G \sqsubset G' \Leftrightarrow M(G)<M(G'),$$
$$G\equiv G'  \; \Leftrightarrow \; M(G)=M(G') \; (\Leftrightarrow \; (G,G')\in\mathfrak{M}), \text{ that is } \equiv \;\; = \;\; \sqsubseteq \cap \sqsubseteq^{-1},$$
$$\equiv^C_G\;\; :=\{H\in \mathcal{W}(G): H\equiv C\}.$$
\end{definition}

Let us use the abbreviation {\it wcc}=``weakly connected component'' and {\it wccs} for the plural.

$\sqsubseteq$ is obviosly a quasiorder and
$\equiv^C_G$ is the set of the wccs of $G$ that are equivalent to $C$ with respect to the equivalence  $\equiv$.

We say that a wcc $W$ of $G$ is {\it raised} by the embedding  $\varphi: G\to G'$ if for the wcc $W'$ of $G'$ that it embeds into, i. e. $\varphi(W)\subseteq W'$, $W\sqsubset W'$  holds. In this case, we say that {\it$W$ is raised into $W'$}. 
A wcc $W$ of $G$ is either raised or embeds into $\equiv^W_{G'}$ (considered now as a subgraph of $G'$).

\begin{lemm}\label{nvbdfuzer} Let $G$ and $G'$ be digraphs having $n$ vertices such that $G\equiv G'$. 
Let $\varphi$ be an embedding $G\to G' \mathrel{\dot{\cup}} O_n^*$.
Let us suppose that $W$ and $W'$ are wccs of $G$ and $G'$ respectively, such that  $W$ is raised into $W'$.
Then $W'\equiv I_m$ for some $m$, and consequently $W\equiv I_{m'}$ for some $m'< m$.
\end{lemm}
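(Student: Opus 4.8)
The plan is to analyze the structure forced on $W'$ by the constraint that $W$ embeds into it while being strictly raised, keeping careful track of the ``$M$-content'' (the loopless part) and the fact that $O_n^*$ consists of pure circles. First I would recall that $M(G)=M(G')$ and $O_n^*=O_{n+1}\mathrel{\dot{\cup}}\dots\mathrel{\dot{\cup}}O_{2n}$ is a disjoint union of circles whose lengths all exceed $n=|V(G)|=|V(G')|$. A crucial observation is a counting/length argument: a circle $O_k$ with $k>n$ cannot be embedded into $G'\mathrel{\dot{\cup}}O_n^*$ \emph{using any vertex of $G'$}, since any homomorphic image of $O_k$ that is injective and meets $G'$ would have to wrap around within $G'$ (which has fewer than $k$ vertices) or jump between components, neither of which is possible for a connected digraph. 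Hence every wcc of $O_n^*$ embeds into (a union of circles inside) $O_n^*$, and conversely the part of $G$ landing in $O_n^*$ is ``used up'' there.

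The heart of the argument is then: since $W$ is raised into $W'$, we have $M(W)<M(W')$, and $W'$ is a wcc of $G'$, so $M(W')\leq M(G')=M(G)$. The extra ``room'' in $W'$ compared to $W$ must be compensated, via $\varphi$, by pushing some other wccs of $G$ into the circles $O_n^*$. I would make this precise by a cardinality count on $M$-edges and vertices within the $\equiv$-class of $W'$: the wccs of $G$ equivalent to $W'$ cannot all embed into $\equiv^{W'}_{G'}$ because $W$ is raised out of that class from below; therefore at least one wcc equivalent to $W'$ (in $G$) must be raised into something, and chasing this forces some wcc of $G$ to embed into a circle $O_k$ of $O_n^*$. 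But a digraph embeds into a circle $O_k$ only if it is itself a disjoint union of directed paths $I_{m}$'s (it has no way to accommodate a vertex of out-degree or in-degree $\geq 2$, i.e.\ no $A$ or $A^T$, and no loops). Running this up the chain of raisings — each raised wcc is raised into another wcc of $G'$, and ultimately the ``top'' of such a chain must be absorbed into $O_n^*$ — I conclude $W'$ itself must be $M$-equivalent to a single directed path, i.e.\ $W'\equiv I_m$ for some $m$. Then, since $W\sqsubset W'$ and $W$ is weakly connected, $M(W)$ is a proper subgraph of the path $I_m$ that is still weakly connected, hence $M(W)=I_{m'}$ with $m'<m$, giving $W\equiv I_{m'}$.

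More carefully, I would organize the middle step as follows. Consider the set $\mathcal{W}'$ of wccs of $G'$ that are $\equiv$-equivalent to $W'$ together with $W'$ itself, and the set $\mathcal{W}$ of wccs of $G$ that are $\equiv$-equivalent to $W'$ \emph{or are raised into something in $\mathcal{W}'$}. Since $G\equiv G'$, the total $M$-content is equal; the embedding $\varphi$ restricted to the $\sqsupseteq W'$-part must be ``balanced'', and because $W$ contributes strictly less $M$-content than the $W'$ it occupies, the deficit is made up only if some wcc of $G$ with $M$-content $\sqsupseteq$ something in $\mathcal{W}'$ is instead sent into $O_n^*$. A wcc sent into a component of $O_n^*$ is sent into a single circle $O_k$ (components map to components), and the only weakly connected digraphs embeddable into $O_k$ are the directed paths $I_m$ (any branching vertex realizes $A$ or $A^T$, which $O_k$ does not contain, and $O_k$ is loopless so no $L_1$). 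Hence that wcc is $\equiv I_m$. Propagating the chain of raisings (finite, since $\sqsubset$ strictly increases $M$-content and everything is finite), the top wcc $W'$ of the chain satisfies $W'\sqsupseteq I_m$ for a path and $W'$ is weakly connected with $M(W')$ no larger than the path it ultimately feeds, so $M(W')$ is itself a directed path, i.e.\ $W'\equiv I_m$.

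\textbf{Main obstacle.} The delicate point is the bookkeeping that forces \emph{some} wcc of $G$ to be pushed into $O_n^*$ rather than the raising being absorbed entirely within $G'$: one must rule out a ``cyclic'' system of raisings among wccs of $G'$ of the same $\equiv$-class that would balance without ever touching $O_n^*$. This is handled by the strictness of $\sqsubset$ together with the observation that $M$-content is preserved overall, so a strictly raised wcc cannot be compensated by another strictly raised wcc within the same finite multiset without a net loss somewhere — and the only place that lost content can reappear is in $O_n^*$, whose circles are too long ($>n$) to have absorbed anything from $G'$ in the other direction. Making this ``conservation of $M$-content'' argument airtight, with the correct accounting of which wccs lie over which, is where the real work is; the path-characterization of subgraphs of a circle and the final descent from $W'$ to $W$ are then routine.
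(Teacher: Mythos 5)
Your proposal is correct and follows essentially the same route as the paper's proof: the counting argument showing that $\equiv^{W'}_G$ cannot map bijectively onto $\equiv^{W'}_{G'}$ (since $W$'s image already occupies $W'$), the resulting chain of raisings that must terminate in a circle of $O_n^*$ because $\sqsubset$ strictly increases along it, the characterization of weakly connected proper subgraphs of a circle as directed paths, and the final descent from $W'\equiv I_m$ to $W\equiv I_{m'}$ with $m'<m$. The "conservation of $M$-content" framing you flag as the delicate point is exactly resolved by this finite strictly increasing chain, which is what the paper does.
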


\begin{proof} It suffices to show that $M(W')$ can be embedded into $O_n^*$, that is what we are going to do. 
For an arbitrary wcc $V$ of $G$, it is clear that $\equiv^V_G$ and $\equiv^V_{G'}$ are either bijective under $\varphi$ (considered as subsgraphs of $G$ and $G'$) or a wcc of $\equiv^V_G$ is raised. 
The fact that $W$ is raised into $W'$ excludes $\equiv^{W'}_G$ and $\equiv^{W'}_{G'}$ being bijective as these two subgraphs are $\equiv$--equivalent, so a bijection would only be possible if only $\equiv^{W'}_G$ was mapped into $\equiv^{W'}_{G'}$. 
This means that a wcc $W_1$ of $\equiv^{W'}_G$ is raised into some wcc $W'_1$. 
If $W'_1$ is a wcc of $O_n^*$, then we are done as clearly
$$W\sqsubset W_1 \sqsubset W'_1.$$
If this is not the case, then we repeat the same argument to get wccs $W_2 \in  \equiv^{W'_1}_G$, and $W'_2$ such that $W_2$ is raised into $W'_2$. Again, if $W'_2$ is in $O_n^*$, then we are done as
$$W\sqsubset W_1 \sqsubset W_2 \sqsubset W'_2.$$
If not, we repeat the argument. Since an infinite chain of wccs with strictly increasing size is impossible, we will get to our claim eventually.
 \end{proof}

We are in the middle of the preparation for Lemma \ref{fiduhweori}. The following Lemma \ref{weiurzwe} is the key, the most difficult part of the paper. 
Before the lemma we give an example to aid the understanding of its statement.
We consider the digraphs $G$ and $G'\mathrel{\dot{\cup}} O_n^*$ and we are interested if the assumptions 
\begin{itemize}
\item $G\leq G'\mathrel{\dot{\cup}} O_n^*$,
\item $G\equiv G'$, and 
\item $G$ and $G'$ have the same number of loops
\end{itemize}
force $G=G'$? 
The answer is negative and a counterexample is shown in Figure \ref{sdfkjfhsufi}.
To prove Lemma \ref{fiduhweori} we will need to ensure that $G=G'$ with a first-order definition.
Observe the following. 
Let $\overline{G}$ denote the digraph  we get from $G$ by adding a loop to the vertex labeled with $v$. 
Now it is impossible to add one loop to $G'$ such that we get a $\overline{G'}$ for which $\overline{G}\leq \overline{G'}\mathrel{\dot{\cup}} O_3^*$ holds. 
We just showed the following property: we can add some loops to $G$, getting $\overline{G}$, such that it is impossible to add the same number of loops to $G'$, getting $\overline{G'}$, such that $\overline{G}\leq \overline{G'}\mathrel{\dot{\cup}} O_3^*$ holds. If we have $G=G'$  this property does not hold, obviosly.
Have we found a property that, together with the three above, ensures $G=G'$? The following lemma answers this question affirmatively. 
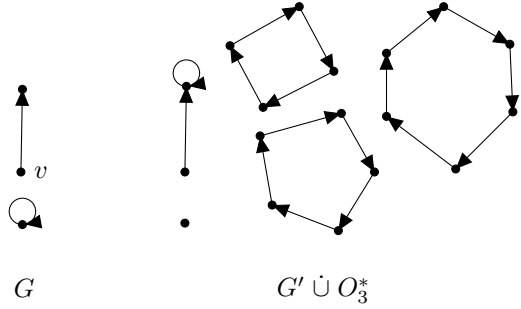
\begin{figure}[h]
\begin{center}
\begin{tikzpicture}[line cap=round,line join=round,>=triangle 45,x=1.0cm,y=1.0cm]
\clip(1.1599999999999995,0.73) rectangle (8.699999999999996,4.939999999999999);
\draw [->] (1.82,2.52) -- (1.84,3.62);
\draw [->] (4.0,2.52) -- (4.02,3.66);
\draw [->] (5.04,3.38) -- (4.6,4.2);
\draw [->] (4.6,4.2) -- (5.52,4.72);
\draw [->] (5.52,4.72) -- (5.98,3.86);
\draw [->] (5.98,3.86) -- (5.04,3.38);
\draw [->] (5.16,2.08) -- (5.0,3.0);
\draw [->] (5.0,3.0) -- (6.08,3.3);
\draw [->] (6.08,3.3) -- (6.52,2.52);
\draw [->] (6.52,2.52) -- (6.04,1.74);
\draw [->] (6.04,1.74) -- (5.16,2.08);
\draw [->] (6.68,3.26) -- (6.68,4.1);
\draw [->] (6.68,4.1) -- (7.44,4.72);
\draw [->] (7.44,4.72) -- (8.32,4.22);
\draw [->] (8.32,4.22) -- (8.36,3.32);
\draw [->] (8.36,3.32) -- (7.6,2.56);
\draw [->] (7.6,2.56) -- (6.68,3.26);
\draw (1.6,1.2199999999999995) node[anchor=north west] {$G$};
\draw (5.1,1.27) node[anchor=north west] {$G'\mathrel{\dot{\cup}} O_3^*$};
\draw (1.87,2.52) node[anchor= west] {$v$};
\draw[rotate around={-90:(1.84,1.82)}] [->] (1.84,1.82) arc (360:10:5pt);
\draw[rotate around={-90:(4.02,3.66)}] [->] (4.02,3.66) arc (360:10:5pt);
\begin{scriptsize}
\draw [fill=black] (1.82,2.52) circle (1.5pt);
\draw [fill=black] (1.84,3.62) circle (1.5pt);
\draw [fill=black] (4.0,2.52) circle (1.5pt);
\draw [fill=black] (4.02,3.66) circle (1.5pt);
\draw [fill=black] (1.84,1.82) circle (1.5pt);
\draw [fill=black] (4.0,1.84) circle (1.5pt);
\draw [fill=black] (5.04,3.38) circle (1.5pt);
\draw [fill=black] (4.6,4.2) circle (1.5pt);
\draw [fill=black] (5.52,4.72) circle (1.5pt);
\draw [fill=black] (5.98,3.86) circle (1.5pt);
\draw [fill=black] (5.16,2.08) circle (1.5pt);
\draw [fill=black] (5.0,3.0) circle (1.5pt);
\draw [fill=black] (6.08,3.3) circle (1.5pt);
\draw [fill=black] (6.52,2.52) circle (1.5pt);
\draw [fill=black] (6.04,1.74) circle (1.5pt);
\draw [fill=black] (6.68,3.26) circle (1.5pt);
\draw [fill=black] (6.68,4.1) circle (1.5pt);
\draw [fill=black] (7.44,4.72) circle (1.5pt);
\draw [fill=black] (8.32,4.22) circle (1.5pt);
\draw [fill=black] (8.36,3.32) circle (1.5pt);
\draw [fill=black] (7.6,2.56) circle (1.5pt);
\end{scriptsize}
\end{tikzpicture}
\end{center}
\caption{A $G$ and a corresponding $G'\mathrel{\dot{\cup}} O_3^*$ forming a counterexample}
\label{sdfkjfhsufi}
\end{figure}

\begin{lemm}\label{weiurzwe} Let $G,G'$ be digraphs with $n$ vertices and with the same number of loops. Let us suppose $G\equiv G'$ and $G\leq G'\mathrel{\dot{\cup}} O_n^*$. 
Then $G\neq G'$ holds if and only if we can add some loops to $G$ so that we get the digraph $\overline{G}$ such that it is impossible to add the same number of loops to $G'$, getting the digraph $\overline{G'}$, such that $\overline{G}\leq \overline{G'}\mathrel{\dot{\cup}} O_n^*$. In formulas this is: there exists a digraph $\overline{G}$ for which
$$ G\leq \overline{G}, \;\; G\equiv \overline{G}$$
such that there exists no digraph $X$ for which 
$$ G'\mathrel{\dot{\cup}} O_n^* \leq X, \;\; X\equiv G'\mathrel{\dot{\cup}} O_n^*, \;\; X\leq L(G)\mathrel{\dot{\cup}} O_n^*, \;\; (\overline{G}, X)\in\mathfrak{L}.$$
\end{lemm}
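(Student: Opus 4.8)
The statement is an ``if and only if'' about when $G\neq G'$ under the standing hypotheses ($|V(G)|=|V(G')|=n$, equal loop count, $G\equiv G'$, $G\leq G'\mathrel{\dot{\cup}} O_n^*$). I would prove the contrapositive of each direction. For the easy direction, assume $G=G'$; then for any $\overline{G}$ with $G\leq\overline{G}$ and $G\equiv\overline{G}$, the digraph $\overline{G}$ is obtained from $G=G'$ by adding some set $S$ of loops to non-looped vertices, and the obvious choice $X:=\overline{G}\mathrel{\dot{\cup}} O_n^*$ witnesses the failure of the right-hand side's negation: indeed $G'\mathrel{\dot{\cup}} O_n^*\leq X$ (adding loops only), $X\equiv G'\mathrel{\dot{\cup}} O_n^*$ (loops don't change $M$), $X=\overline{G}\mathrel{\dot{\cup}} O_n^*\leq L(G)\mathrel{\dot{\cup}} O_n^*$ since $\overline{G}\leq L(G)$, and $(\overline{G},X)\in\mathfrak{L}$ because $M(\overline{G})=M(G)=M(X\text{ restricted appropriately})$—more precisely $X$ is exactly ``$G'$ with the loops of $\overline G$'' disjoint-unioned with $O_n^*$, so $L(\overline G)=L(X\text{ on its first }n\text{ vertices})$ and hence the $\mathfrak L$-relation holds. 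So if $G=G'$ no such $\overline{G}$ can exist, which is the contrapositive.

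\textbf{The hard direction.} Assume $G\neq G'$; I must construct $\overline{G}$. The natural candidate, suggested by the worked example before the lemma, is: pick an embedding $\varphi\colon G\to G'\mathrel{\dot{\cup}} O_n^*$ and a vertex $v$ of $G$ that is ``moved badly''—either raised into a component of $O_n^*$, or sent into $G'$ in a way that, by the counting, cannot be reconciled—and let $\overline{G}$ be $G$ with a loop added at such a $v$ (and possibly more loops to restore $\equiv$, i.e.\ to keep $M(\overline G)=M(G)$; but adding a loop never changes $M$, so in fact $G\equiv\overline G$ automatically and $G\leq\overline G$ is immediate). Then I must show no $X$ as on the right can exist. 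Suppose for contradiction such an $X$ exists. From $(\overline{G},X)\in\mathfrak{L}$ and $X\leq L(G)\mathrel{\dot{\cup}} O_n^*$ we learn $M(X)\leq M(G)\mathrel{\dot{\cup}} O_n^*$ and that $X$ and $\overline G$ have the same ``loop pattern up to $M$'', while from $G'\mathrel{\dot{\cup}} O_n^*\leq X$ and $X\equiv G'\mathrel{\dot{\cup}} O_n^*$ we get that $X$ is $G'\mathrel{\dot{\cup}} O_n^*$ with some loops added, so $X$ has exactly the same underlying loopless graph $M(G')\mathrel{\dot{\cup}} O_n^*=M(G)\mathrel{\dot{\cup}} O_n^*$. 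Composing an embedding $\overline G\to X$ (which exists because $(\overline G,X)\in\mathfrak L$ forces $M(\overline G)\leq M(X)$, together with matching loops—I need to check $\mathfrak L$ really gives $\overline G\leq X$ here) with the structure of $X$, I obtain an embedding of $G$ into $G'\mathrel{\dot{\cup}} O_n^*$ that additionally respects the loop placed at $v$; I then invoke Lemma~\ref{nvbdfuzer} to control which components can be raised, and run the component-counting argument: the number of loops is preserved, $G\equiv G'$ pins down the multiset of $M$-equivalence classes of wccs, so an embedding $G\to G'\mathrel{\dot{\cup}} O_n^*$ that is injective on loops and component-count-preserving away from the $I_m$-type components forces, via Lemma~\ref{nvbdfuzer}, that no component is actually raised, hence $G\hookrightarrow G'$ componentwise with matching loops, hence $G=G'$—contradiction.

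\textbf{Main obstacle.} The crux is choosing the vertex $v$ (equivalently the digraph $\overline G$) so that the added loop genuinely obstructs \emph{every} candidate $X$, not just the naive $X=\overline{G'}\mathrel{\dot{\cup}} O_n^*$. The quantifier structure is delicate: on the right we must defeat all $X$ satisfying four simultaneous constraints, and $X$ is allowed to be any digraph, not merely something of the form $\overline{G'}\mathrel{\dot{\cup}} O_n^*$. I expect the real work is to show that the four constraints $G'\mathrel{\dot{\cup}} O_n^*\leq X$, $X\equiv G'\mathrel{\dot{\cup}} O_n^*$, $X\leq L(G)\mathrel{\dot{\cup}} O_n^*$, $(\overline G,X)\in\mathfrak L$ together pin $X$ down tightly enough—essentially forcing $M(X)=M(G)\mathrel{\dot\cup} O_n^*$ and the loops of $X$ to sit on a copy of $G'\mathrel{\dot\cup} O_n^*$ inside it while simultaneously supporting a copy of $\overline G$ with its extra loop—to reduce to the clean component-counting situation where Lemma~\ref{nvbdfuzer} applies. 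Making the interplay between $\mathfrak L$, $L(G)$, and $M$ precise, and extracting from ``$(\overline G,X)\in\mathfrak L$'' the usable fact that $\overline G\leq X$ with the loop at $v$ preserved, is where I would spend most of the effort; once that is in hand the contradiction is a bookkeeping argument on wccs using Lemma~\ref{nvbdfuzer} and the equal-loop-count hypothesis.
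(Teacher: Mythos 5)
Your easy direction is fine, but the hard direction has a genuine gap: the entire content of the lemma is the construction of $\overline{G}$, and your proposal does not actually supply one. Adding a single loop at one ``badly moved'' vertex $v$ cannot work in general: even with that loop in place, other loop-free components of $\overline{G}$ are still free to be raised into $O_n^*$, and Lemma~\ref{nvbdfuzer} does not let you conclude that ``no component is raised'' --- it only tells you that any raised component is $\equiv I_m$ for some $m$. So your endgame (``hence $G\hookrightarrow G'$ componentwise with matching loops, hence $G=G'$'') does not follow from the tools you invoke. You also misread $\mathfrak{L}$: $(\overline{G},X)\in\mathfrak{L}$ only says $\overline{G}$ and $X$ contain the same maximal $L_k$, i.e.\ have the same number of loops; it gives no embedding $\overline{G}\leq X$, so the reduction you sketch from the four constraints on $X$ to ``an embedding of $G$ into $G'\mathrel{\dot{\cup}} O_n^*$ respecting the loop at $v$'' is not available.

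The paper's proof resolves exactly the obstacle you flag, and does so with a construction of a different shape. Write $G=C\mathrel{\dot{\cup}} G_R$ and $G'=C\mathrel{\dot{\cup}} G'_R$ where $C$ is the largest common union of whole wccs. Taking a $\sqsubseteq$-maximal wcc $W$ of $G_R$ and an embedding $\varphi\colon G\to G'\mathrel{\dot{\cup}} O_n^*$, Lemma~\ref{nvbdfuzer} forces $W\equiv I_k$; a counting argument using the equal-loop-count hypothesis then shows $k>1$ and, crucially, that every wcc of $\equiv^{I_k}_{G_R}$ is loop-free while every wcc of $\equiv^{I_k}_{G'_R}$ carries a loop. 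One then forms $\overline{W}$ by adding one loop to $I_k$ in a position chosen to differ from a fixed $W'\in\;\equiv^{I_k}_{G'_R}$, and --- this is the step your single-loop idea misses --- builds $\overline{G}$ by adding a loop to \emph{every} loop-free wcc of $G$ (those $\equiv I_k$ all becoming $\overline{W}$). Because every component of $\overline{G}$ now has a loop and $O_n^*$ has none, any embedding $\overline{G}\to\overline{G'}\mathrel{\dot{\cup}} O_n^*$ must land inside $\overline{G'}$ and hence be an isomorphism; the contradiction is then the multiplicity inequality $|{=^{\overline{W}}_{\overline{G}}}|>|{=^{\overline{W}}_{\overline{G'}}}|$, which follows from the loop-freeness of $\equiv^{I_k}_{G_R}$ and the choice of $\overline{W}$. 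Without the reduced-graph decomposition, the identification of the $I_k$-class, and the ``loop on every component'' trick, the quantifier over all $X$ cannot be defeated.
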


\begin{proof} 
The direction $\Leftarrow$ (or rather its contrapositive) is obvious. Accordingly, let us suppose $G\neq G'$.

Let $C$ denote the largest joint subgraph consisting of whole wccs of both $G$ and $G'$. 
Let us introduce the so-called {\it reduced subgraphs}:
\begin{equation}\label{cvxyxopp}
G=C\mathrel{\dot{\cup}} G_R, \text{ and } G'=C\mathrel{\dot{\cup}} G'_R.
\end{equation}
Observe that the digraphs $G_R$ and $G'_R$ are not empty and $G_R \equiv G'_R$.

Let $W$ denote a $\sqsubseteq$-maximal wcc of $G_R$.
We claim $W\equiv I_k$ for some $k>1$, and 
\begin{equation}\label{sdsjhjsdp}
|=_G^{I_k}|-|=_{G'}^{I_k}|=|\equiv_{G_R}^{I_k}|,
\end{equation}
or equivalently, all wccs of $\equiv_{G_R}^{I_k}$ are loop-free.
Let $\varphi$ be an embedding $G\to G'\mathrel{\dot{\cup}} O_n^*$. 
Observe that $\varphi$ raises a wcc isomorphic to $W$ as $G'$ has less wccs isomorphic to $W$ by the definitions of the reduced subgraphs. 
Hence, by Lemma \ref{nvbdfuzer}, we have $W\equiv I_k$ for some $k\geq 1$. 
This is less then what we claimed, the exclusion of the case $k=1$ remains to be seen yet. 
It is easy to see from the definitions that \eqref{sdsjhjsdp} is equivalent to the fact that all wccs of $\equiv_{G_R}^{I_k}$ are loop-free. 
Let us suppose, for contradiction, that a wcc $V$ of $\equiv_{G_R}^{I_k}$ has a loop in it. 
Observe that the loops of $G$ and $G'$ are bijective under $\varphi$. 
Moreover, from the maximality of $W$, it is easy to see that for a wcc $U\sqsupset I_k$ of $G$, the loops of $\equiv^U_G$ are bijective with the loops of $\equiv^U_{G'}$ under $\varphi$. 
Consequently, none of the wccs of $=^V_G$ is raised as, by our previous argument, there is no component to be raised into.
 Hence $|=^V_G|\leq |=^V_{G'}|$, which clearly contradicts the fact that $V$ is an element of $\equiv_{G_R}^{I_k}$. 
We have proven \eqref{sdsjhjsdp}, only the exclusion of $k=1$ remains from our claim above. Let us suppose $k=1$ for contradiction.  
An arbitrary wcc $K$ of $G$ is either $K\equiv I_1$ or $K\sqsupset I_1$. 
In the latter case, as we have seen above, the loops of $\equiv_G^K$ are bijective with $\equiv_{G'}^K$. 
If $K\equiv I_1$, then we have shown above that the nonempty set $\equiv_{G_R}^{I_1}$ is loop-free, while, as a consequence, $\equiv_{G'_R}^{I_1}$, that has the same number of elements, contains of loops. 
This means  $G$ has more loops then $G'$ does, a contradiction. 
We have entirely proven our claim. 

Observe that, from our claim above, the nonempty set $\equiv_{G'_R}^{I_k}$ contains no loop-free elements. 
Take $W' \in \; \equiv_{G'_R}^{I_k}$. 
We make the digraph $\overline{W}$ from $I_k$ by adding 1 loop so that $\overline{W}\neq W'$. 
This is possible because either $W'$ has loops on all of its vertices, then (using $k>1$) adding the loop arbitrarily suffices, or there is a vertex that has no loop on it, then adding the loop the this vertex in $I_k$ does.

Now we create the digraph $\overline{G}$ of the theorem by adding 1 loop to each loop-free wcc of $G$. 
To the wccs of $=^{I_k}_G$  we add 1 loop each such that they all become $\overline{W}$. 
To all other loop-free wccs of $G$, we add 1 loop each arbitrarily.

To prove that $\overline{G}$ is sufficient, we suppose, for contradiction, that, by adding the same number of loops to $G'$, we can get some $\overline{G'}$ for which $\overline{G}\leq \overline{G'}\mathrel{\dot{\cup}} O_n^*$. 
Let $\phi$ be an embedding $\overline{G}\to \overline{G'}\mathrel{\dot{\cup}} O_n^*$. For each wcc has a loop in $\overline{G}$, $\phi$ is technically an isomorphism $\phi: \overline{G}\to \overline{G'}$. Our final claim is, 
\begin{equation}\label{sdouw8q4u}
|=^{\overline{W}}_{\overline{G}}|>|=^{\overline{W}}_{\overline{G'}}|,
\end{equation}
which contradicts the existence of the isomorphism $\phi: \overline{G}\to \overline{G'}$. If \eqref{sdouw8q4u} gets proven, we are done. Using the decomposition \eqref{cvxyxopp} and the knowledge on how $\overline{G}$ was created, the left side of \eqref{sdouw8q4u} is
\begin{equation}\label{i99834udj}
|=^{\overline{W}}_{\overline{G}}|\;\;=\;\;
|=^{I_k}_G|+|=^{\overline{W}}_{G_R}|+|=^{\overline{W}}_{C}|\;\;=\;\;
|=^{I_k}_G|+|=^{\overline{W}}_{C}|,
\end{equation}
since $\equiv^{W}_{G_R} \;=\;\;  \equiv^{I_k}_{G_R}$ was shown to be loop-free above. 
Observe that even though we do not know exactly how $\overline{G'}$ was created, a component isomorphic to $\overline{W}$ can only appear in it if either it was already in $G'$ and no loop was added to that specific component, or the component was isomorphic to $I_k$ in $G'$, but a loop was added to the right place. This implies 
\begin{equation}\label{1234743}
|=^{\overline{W}}_{\overline{G'}}|\;\;\leq\;\;
|=^{I_k}_{G'}|+|=^{\overline{W}}_{G'_R}|+|=^{\overline{W}}_{C}|.
\end{equation}
Using \eqref{i99834udj} and \eqref{1234743}, it is enough to show that 
$$|=^{I_k}_G|+|=^{\overline{W}}_{C}|\;\;>\;\;
|=^{I_k}_{G'}|+|=^{\overline{W}}_{G'_R}|+|=^{\overline{W}}_{C}|,$$
or equivalently,
$$|=^{I_k}_G|-|=^{I_k}_{G'}|\;\;>\;\;
|=^{\overline{W}}_{G'_R}|.$$ 
Using \eqref{sdsjhjsdp}, this turns into $|\equiv_{G_R}^{I_k}| > |=^{\overline{W}}_{G'_R}|$, which is obvious considering how $\overline{W}$ was created. We have proven \eqref{sdouw8q4u}, we are done.
 \end{proof}

\begin{lemm}\label{fiduhweori} The following relation is definable:
\begin{equation}\label{yyaajjjqwe}
\{(G,G\mathrel{\dot{\cup}} O_n^*): G\in\mathcal{D}, \; |V(G)|=n\}.
\end{equation}
\end{lemm}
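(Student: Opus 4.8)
The plan is to characterize the pair $(G, G\mathrel{\dot\cup}O_n^*)$ purely in terms of already-definable relations, with the number of vertices $n$ supplied via the relation $\{(G,E_n)\}$ from \eqref{yiteja} and the digraph $O_n^*$ supplied via \eqref{gmcuyalu} of Lemma \ref{983489887772543214}. So the candidate definition of the relation \eqref{yyaajjjqwe} reads: a pair $(X,Y)\in\mathcal{D}^2$ belongs to it iff there exists $E_n\in\mathcal{E}$ with $(X,E_n)\in\eqref{yiteja}$ (so $|V(X)|=n$), there exists $O_n^*$ with $(O_n^*,E_n)\in\eqref{gmcuyalu}$, and $Y$ is the (claimed unique) digraph satisfying a list of first-order conditions relating $X$, $O_n^*$ and $Y$. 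The obvious necessary conditions are: $X\leq Y$, $O_n^*\leq Y$, $X\equiv Y$ is false in general (adding circles changes $M$), so instead one uses $(Y, X\mathrel{\dot\cup}O_n^*)$-type conditions; concretely $X\mathrel{\dot\cup}O_n^*$ has the same loops as $X$, has $M$-value $M(X)\mathrel{\dot\cup}O_n^*$, and contains $X$ and $O_n^*$ as subgraphs. The heart of the matter is to pin $Y$ down \emph{uniquely} among digraphs satisfying these "lower bound" constraints, and this is exactly what the preceding lemmas were built for.

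First I would reduce to the situation governed by Lemma \ref{weiurzwe}. Set $H:=X\mathrel{\dot\cup}O_n^*$; this digraph is definable from $X$ and $O_n^*$ up to the ambiguity we are trying to kill, so instead I characterize $Y$ directly as a $\leq$-minimal digraph $Y$ with: (i) $X\leq Y$ and $O_n^*\leq Y$; (ii) $Y$ has the same number of loops as $X$ (definable via \eqref{leitaf}); (iii) $(Y, \text{something})\in\mathfrak{M}$ forcing $M(Y)=M(X)\mathrel{\dot\cup}O_n^*$ — note $M(X)$ and $M(X)\mathrel{\dot\cup}O_n^*$ are definable from $X$ via \eqref{adubvcipqw}, $\mathcal{O}$, and the disjoint-union tricks already used repeatedly (e.g. as in the proof of Lemma \ref{apoowjsbc}); and (iv) $Y\leq L(X)\mathrel{\dot\cup}O_n^*$, which caps the loops from above. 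Conditions (i)–(iv) say precisely that $Y$ is obtained from $M(X)\mathrel{\dot\cup}O_n^*$ by adding loops, in total as many as $X$ has, in such a way that $X$ still embeds; equivalently $Y\equiv X\mathrel{\dot\cup}O_n^*$, $Y$ has the loop count of $X\mathrel{\dot\cup}O_n^*$, and $X\mathrel{\dot\cup}O_n^*\leq Y\mathrel{\dot\cup}O_m^*$ trivially — but this is not yet enough to force $Y=X\mathrel{\dot\cup}O_n^*$, as the counterexample in Figure \ref{sdfkjfhsufi} shows. This is where Lemma \ref{weiurzwe} enters.

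The key step: among all $Y$ satisfying (i)–(iv), I single out $X\mathrel{\dot\cup}O_n^*$ by the extra first-order clause coming from Lemma \ref{weiurzwe}. Apply that lemma with the roles "$G$" $:=X\mathrel{\dot\cup}O_n^*$ and "$G'$" $:=Y$ (both have $n':=n+|V(O_n^*)|$ vertices, both have the loop count of $X$, and $G\equiv G'$ by (iii)); note $X\mathrel{\dot\cup}O_n^*\leq Y\mathrel{\dot\cup}O_{n'}^*$ holds since $X\mathrel{\dot\cup}O_n^*\leq Y$. Lemma \ref{weiurzwe} then says $X\mathrel{\dot\cup}O_n^*= Y$ iff there is \emph{no} digraph $\overline{G}$ with $X\mathrel{\dot\cup}O_n^*\leq\overline{G}$, $X\mathrel{\dot\cup}O_n^*\equiv\overline{G}$ obstructing $Y$ in the stated way. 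The digraph $O_{n'}^*$ needed here is itself definable from $E_{n'}$ (Lemma \ref{983489887772543214}), and $E_{n'}$ is definable from $E_n$ and the vertex count of $O_n^*$ using $\mathfrak{E}_+$ (Lemma \ref{238971875642}) — so the whole obstruction clause is expressible. Thus $Y=X\mathrel{\dot\cup}O_n^*$ is forced, giving uniqueness, and conversely $X\mathrel{\dot\cup}O_n^*$ visibly satisfies (i)–(iv) plus the no-obstruction clause. I expect the main obstacle to be purely bookkeeping: verifying that every auxiliary digraph invoked in the Lemma \ref{weiurzwe} clause ($O_{n'}^*$, $L(X)\mathrel{\dot\cup}O_n^*$, the various $M$-values and disjoint unions) is definable \emph{uniformly from the data $(X,E_n)$} using only the earlier lemmas, and that the quantifier over "$\overline{G}$" ranges correctly; the mathematical content is already carried entirely by Lemmas \ref{nvbdfuzer} and \ref{weiurzwe}.
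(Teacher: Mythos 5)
Your overall strategy---pin $Y$ down by the sandwich conditions ($X\le Y$, $M(Y)=M(X)\mathrel{\dot{\cup}}O_n^*$, equal loop count, $Y\le L(X)\mathrel{\dot{\cup}}O_n^*$) and then kill the remaining ambiguity with the obstruction clause of Lemma \ref{weiurzwe}---is the paper's strategy, and your conditions (i)--(iv) essentially match its first three bullet points. The gap is in how you instantiate Lemma \ref{weiurzwe}. You take $G:=X\mathrel{\dot{\cup}}O_n^*$ and $G':=Y$, so the obstruction clause of the lemma quantifies over digraphs $\overline{G}$ satisfying $X\mathrel{\dot{\cup}}O_n^*\le\overline{G}$ and over digraphs $Z$ squeezed between $Y\mathrel{\dot{\cup}}O_{n'}^*$ and $L(X\mathrel{\dot{\cup}}O_n^*)\mathrel{\dot{\cup}}O_{n'}^*$. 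But $X\mathrel{\dot{\cup}}O_n^*$ is precisely the object the formula is supposed to define, and $Y\mathrel{\dot{\cup}}O_{n'}^*$ is another instance of the relation \eqref{yyaajjjqwe}---for the \emph{larger} digraph $Y$, so no induction on the number of vertices can rescue it. Producing $H\mathrel{\dot{\cup}}O_m^*$ from $H$ is exactly the nontrivial problem at hand (Figure \ref{sdfkjfhsufi} shows the naive ``minimal common upper bound'' recipe fails), so what you dismiss as bookkeeping is a genuine circularity. A smaller but related weak point: you assert $X\mathrel{\dot{\cup}}O_n^*\le Y$ from (i)--(iv), but (i) only gives separate embeddings of $X$ and of $O_n^*$ into $Y$, and these need not be disjoint (e.g.\ a looped component of $X$ may land on a circle of $Y$ that has acquired a loop).

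The instantiation that closes the argument---and the one the paper uses---works one level down, with $n$-vertex digraphs: take $G:=X$ and let $Y$ itself play the part of $G'\mathrel{\dot{\cup}}O_n^*$. Then the obstruction clause mentions only $X$, $Y$, and $L(G)\mathrel{\dot{\cup}}O_n^*=L(X)\mathrel{\dot{\cup}}O_n^*$; the last of these is definable \emph{without} Lemma \ref{weiurzwe}, as the minimal $W$ with $L(X)\le W$, $O_n^*\le W$ and no $Z\le W$ obtained from $O_n^*$ by adding a loop (this works because $L(X)$ already carries every possible loop and the circles of $O_n^*$ are too large to embed into $X$). With these roles every term of the obstruction clause is expressible from the free variables via $\le$, $\equiv$ and $\mathfrak{L}$, and the definition is no longer self-referential. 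This is not cosmetic: the statement of Lemma \ref{weiurzwe} was engineered so that its obstruction clause refers only to $G$, $G'\mathrel{\dot{\cup}}O_n^*$ and $L(G)\mathrel{\dot{\cup}}O_n^*$, i.e.\ only to data already in hand when $Y$ is the candidate for $X\mathrel{\dot{\cup}}O_n^*$; your choice of roles forfeits exactly that feature.
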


\begin{proof} The relation in question is the set of pairs $(X,Y)\in\mathcal{D}^2$ that satisfy the following conditions. 
Let $(X,E_n)\in\mathfrak{E}$. 
Now $L(X)\mathrel{\dot{\cup}} O_n^*$ is the minimal digraph $W\in \mathcal{D}$ with the following conditions: $L(X)\leq W$, $O_n^*\leq W$, there is no $O_n^*\prec Z$ for which $L_1\leq Z$ and $Z\leq W$. 
(Here we used the fact that $O_n^*$ has so big circles that cannot fit into $X$.) 
Now Lemma \ref{weiurzwe} tells us that the set of the following first-order conditions suffice:
\begin{itemize}
\item $Y \equiv L(X)\mathrel{\dot{\cup}} O_n^*$,
\item $X \leq Y$,
\item $(X,Y) \in\mathfrak{L}$, and
\item  (taken from the end of the statement of Lemma \ref{weiurzwe}:) there exists NO digraph $\overline{X}$ for which:
\begin{itemize}
\item $ X\leq \overline{X}$, $X\equiv \overline{X}$, and 
\item there exists no digraph $Z$ for which $ Y \leq Z$, $Z\equiv Y$, $Z\leq L(X)\mathrel{\dot{\cup}} O_n^*$, and $(\overline{X}, Z)\in\mathfrak{L}$.
\end{itemize}
\end{itemize}
 \end{proof}

\begin{definition}\label{iuyxcuieiqiufjsf} Let $G\in \mathcal{D}$ be a digraph having $n$ vertices. Let us denote the vertices of $O_n^*$ with
$$V(O_n^*):=\{v_{i,j}: 1\leq i \leq n, \; 1\leq j\leq n+i\} $$
such that $V(O_{n+i})=\{v_{i,j}: 1\leq j\leq n+i\}$. Let $\underline{v}:=(v^1,\dots,v^n)$ be a tuple of the vertices of $G$. 
Let us define the digraph $G\overset{\underline{v}}{\leftarrow} O_n^*$ the following way:
$$ V(G\overset{\underline{v}}{\leftarrow} O_n^*):=V(G\mathrel{\dot{\cup}} O_n^*), \;\;\; E(G\overset{\underline{v}}{\leftarrow} O_n^*):=E(G\mathrel{\dot{\cup}} O_n^*) \cup \{(v_{i,1},v^i): 1\leq i \leq n\}.$$
\end{definition}

\begin{lemm} The following relation is definable:
\begin{equation}\label{baassppwe}
\{(G,G\overset{\underline{v}}{\leftarrow} O_n^*): G\in\mathcal{D},\; |V(G)|=n \text{ and $\underline{v}$ is a tuple of the vertices of $G$}\}.
\end{equation}
\end{lemm}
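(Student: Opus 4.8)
The relation \eqref{baassppwe} differs from the already-definable relation \eqref{yyaajjjqwe} only in that $n$ additional edges $(v_{i,1},v^i)$ are glued from the ``first'' vertex of the circle $O_{n+i}$ to the $i$-th coordinate $v^i$ of the chosen tuple $\underline v$ of vertices of $G$. The plan is to characterize the pairs $(X,Y)$ belonging to \eqref{baassppwe} by: first recovering the ``unmarked'' partner $X\mathrel{\dot{\cup}} O_n^*$ of $Y$ using Lemma~\ref{fiduhweori}; then saying that $Y$ is obtained from $X\mathrel{\dot{\cup}} O_n^*$ by adding exactly $n$ non-loop edges, each of which starts at a vertex that lies on one of the distinguished large circles $O_{n+i}$ and points into the copy of $X$; and finally pinning down, by a counting/covering argument, that the circle $O_{n+i}$ receives exactly one new edge (namely the one emanating from $v_{i,1}$) for each $i$, so that no extra information is smuggled in. The key auxiliary fact I would exploit, exactly as in Lemma~\ref{fiduhweori}, is that the circles of $O_n^*$ are chosen so large that none of them embeds into $X$ (since $|V(X)|=n<n+1$), so the ``$O$-part'' of any digraph is canonically identifiable.

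Concretely, I would proceed as follows. First, by Lemma~\ref{fiduhweori} together with the previously established definability of $(X,E_n)\in\mathfrak E$ and of $\{(O_n^*,E_n):n\in\mathbb N\}$ from Lemma~\ref{983489887772543214}, the digraph $X\mathrel{\dot{\cup}} O_n^*$ is definable from $X$; call it $Y_0$. Then $Y$ must satisfy $Y_0\prec^{\,n} Y$ in the sense that $Y$ covers $Y_0$ after exactly $n$ steps, all of them adding a non-loop edge — this is expressible because $\prec$ is definable and ``adding a non-loop edge'' is the statement $Z\prec Z'$ together with $L_1\leq Z \Leftrightarrow L_1\leq Z'$ and, more carefully, that the number of loops is unchanged (loop-count differences along a cover being first-order via $\mathfrak L$ and the relations \eqref{leitaf}). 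Next I would impose the structural constraints describing \emph{where} these $n$ edges go: each added edge has its tail on the $O$-part and its head on the $X$-part. ``Being on the $O$-part'' of $Y_0$ is not literally a first-order notion about vertices (we have no vertex variables in $\mathcal D'$), so instead I would phrase everything in terms of which digraphs embed into $Y$: e.g. that $Y$ still satisfies $A\nleq$ ``at the $O$-vertices'' in a suitable localized sense — realistically I would mimic the bookkeeping already used in Lemma~\ref{apoowjsbc} for $O_{n,L}^*$ and in the $\male$-type constructions, encoding ``the circle $O_{n+i}$ has exactly one outgoing pendant edge into $X$'' by prescribing, for each admissible circle length, which of the digraphs $\male_{n+i}$, $O_{n+i\to m}$, etc.\ must or must not embed into $Y$.

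The decisive point — and the main obstacle — is \textbf{uniqueness of the reconstruction}: many non-isomorphic digraphs arise by attaching $n$ pendant edges from the circles of $O_n^*$ into $X$, and I must select, by a first-order condition, precisely the one where the $i$-th circle (the one of length $n+i$) sends its single new edge from its canonical vertex $v_{i,1}$ into the $i$-th coordinate $v^i$ of $\underline v$. Matching ``circle of length $n+i$'' with ``coordinate $i$'' is unproblematic, since the length $n+i$ is recoverable from the circle and hence determines $i$ once $n$ is known (here the relation \eqref{hatluevp} and Lemma~\ref{983489887772543214} do the indexing work). Matching to the \emph{right} vertex $v^i$ of $G$, and making the whole configuration rigid enough to be determined up to isomorphism, is subtler: I expect to need that, because the $n$ circles have $n$ distinct lengths, an isomorphism of $G\overset{\underline v}{\leftarrow}O_n^*$ must fix each circle setwise, hence fix its unique pendant vertex, hence is determined by its restriction to the $X$-part together with the induced pairing of circle-indices to targets — so the isomorphism type of $G\overset{\underline v}{\leftarrow}O_n^*$ records exactly the data of $G$ with the ordered tuple $\underline v$, and conversely any $(X,Y)$ meeting all the imposed embeddability constraints is forced to be of this form. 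Assembling these constraints into a single first-order formula, and checking that it admits no spurious solutions (e.g.\ configurations where two circles feed the same vertex, or a circle feeds a vertex ``twice'' after identification), is the technical heart; I would do it by the same style of minimality/cover arguments used repeatedly in Lemmas~\ref{ppghhfjasd}–\ref{apoowjsbc}, and I do not anticipate a conceptual gap, only a careful enumeration of which auxiliary digraphs (all of bounded ``overhead'' over $O_n^*$ and $X$) must be allowed or forbidden as subgraphs of $Y$.
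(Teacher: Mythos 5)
There is a genuine gap, and it is exactly at the point you flag as "the technical heart" and hope to settle by bookkeeping: the localization of the added edges cannot be carried out on $X\mathrel{\dot{\cup}} O_n^*$ directly. The paper's proof does not build $Y$ over $X\mathrel{\dot{\cup}} O_n^*$ at all; it first defines the intermediate relation $\{(G,\,L(G)\overset{\underline{v}}{\leftarrow} O_n^*)\}$, i.e.\ it attaches the circles to the \emph{fully looped} digraph $L(G)$, and only afterwards strips the loops off by sandwiching ($X\mathrel{\dot{\cup}} O_n^*\leq Y\leq L(X)\overset{\underline{v}}{\leftarrow} O_n^*$ together with $(L(X)\overset{\underline{v}}{\leftarrow} O_n^*,Y)\in\mathfrak{M}$ and $(X,Y)\in\mathfrak{L}$). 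The loops are not a convenience but the mechanism that makes the embeddability tests local: since the looped vertices of $L(X)\overset{\underline{v}}{\leftarrow} O_n^*$ are precisely the vertices of the $L(G)$-part, the gadget $\male_i^L\leq Y$ genuinely certifies that the circle $O_i$ feeds a vertex of the $G$-part, $\male_{i,j}^L\nleq Y$ genuinely forbids two circles feeding the same such vertex, and the condition "there is no $Z$ with $L(X)\prec Z$, $(L(X),Z)\in\mathfrak{E}$, $Z\leq Y$" genuinely says that no edge was added inside the $G$-part, because any embedding of $L(X)$-plus-one-edge must map looped vertices to looped vertices. For an arbitrary unlooped $X$ all of these tests lose their meaning: for instance with $X=E_n$ the analogous condition "no $Z$ with $X\prec Z$ and $Z\leq Y$" is violated by every legitimate $Y$ (any single edge of $Y$ realizes $Z=I_2\mathrel{\dot{\cup}}E_{n-2}$), and the looped gadgets $\male_i^L$, $\male_{i,j}^L$ detect nothing since the target vertices carry no loops. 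So the reconstruction you propose admits spurious solutions and rejects legitimate ones; the missing idea is the detour through $L(G)$.

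A secondary but also real problem: your condition "$Y_0\prec^{\,n}Y$, i.e.\ $Y$ is reached from $Y_0$ by exactly $n$ cover steps" is not a first-order formula when $n$ is a variable (one cannot quantify over covering chains of unbounded length). The paper never counts the added edges; it takes $Y$ \emph{minimal} subject to the list of embedding constraints (each circle must feed some looped vertex, no two circles the same one, nothing added inside either part), and minimality forces exactly one pendant edge per circle. Your instinct to control where the edges go by prescribing which of $\male$-type and $O_{i\to j}$-type digraphs embed is in the right spirit, but it only becomes sound after the loop rigidification, and the edge count must be handled by minimality rather than by iterated covers.
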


\begin{proof} First, we define the relation 
\begin{equation}\label{ffbbzzzt}
\{(G,L(G)\overset{\underline{v}}{\leftarrow} O_n^*): G\in\mathcal{D},\; |V(G)|=n \text{ and $\underline{v}$ is a tuple of the vertices of $L(G)$}\}.
\end{equation}
This relation consists of those pairs $(X,Y)\in \mathcal{D}^2$ for which the following holds. Let $(X,E_n)\in\mathfrak{E}$. From $X$, $L(X)$ is definable. Hence, with the relation (\ref{yyaajjjqwe}), $L(X)\mathrel{\dot{\cup}} O_n^*$ is definable. 
Now $Y$ is minimal with the following properties:
\begin{itemize}
\item $L(X)\mathrel{\dot{\cup}} O_n^* \leq Y$ and $(Y, L(X)\mathrel{\dot{\cup}} O_n^*)\in \mathfrak{E}$.
\item There is no $L(X)\prec Z$ for which $(L(X),Z)\in\mathfrak{E}$ and $Z\leq Y$. 
\item There is no $O_n^*\prec Z$ for which $(O_n^*,Z)\in\mathfrak{E}$ and $Z\leq Y$.
\item For all $O_i\in \mathcal{O}_{\cup}$, $O_i\leq O_n^*$ implies $\male_i^L\leq Y$.
\item There are no $O_i, O_j\in \mathcal{O}_{\cup}$ for which $O_i\neq O_j$, $O_i, O_j\leq O_n^*$ and $\male_{i,j}^L\leq Y$.
\end{itemize}
   
Finally, the relation (\ref{baassppwe}) consists of those pairs $(X,Y)\in \mathcal{D}^2$ which satisfy the following conditions. 
Let $(X,E_n)\in\mathfrak{E}$ again. 
Then $Y$ satisfies: there exists $L(X)\overset{\underline{v}}{\leftarrow} O_n^*$ for which
\[
(L(X)\overset{\underline{v}}{\leftarrow} O_n^*, Y)\in \mathfrak{M}, \;\;\; X \mathrel{\dot{\cup}} O_n^*\leq Y\leq L(X)\overset{\underline{v}}{\leftarrow} O_n^*, \;\;\; (X,Y)\in\mathfrak{L}.
\] 
 \end{proof}

\begin{definition}\label{748298471239874} Let $v_1$ and $v^1$ denote the vertices of $\male_i$ and $\male_j$ with degree 1. Let us define $\male_i\to\male_j$ the following way:
$$V(\male_i\to\male_j):=V(\male_i \mathrel{\dot{\cup}} \male_j), \;\; E(\male_i\to\male_j):=E(\male_i \mathrel{\dot{\cup}} \male_j)\cup \{(v_1,v^1)\}.$$
\end{definition}

\begin{lemm} The following relation is definable:
$$ \{(\male_i\to\male_j,E_i,E_j):1<i,j, \;\; i\neq j\}. $$
\end{lemm}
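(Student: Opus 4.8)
The plan is to stay inside the bootstrapping scheme of Lemma~\ref{apoowjsbc} and the surrounding lemmas: the relation in question is $\{(X,Y,Z)\in\mathcal{D}^3 : Y,Z\in\mathcal{E},\ E_2\le Y,\ E_2\le Z,\ Y\neq Z,\ \Phi(X,Y,Z)\}$, where, writing $Y=E_i$ and $Z=E_j$ (so $1<i,j$ and $i\neq j$), the formula $\Phi$ must single out the digraph $\male_i\to\male_j$. From $E_i,E_j$ we already have, definably: the empty graphs $E_{i+1},E_{j+1},E_{i+j},E_{i+j+1},E_{i+j+2}$ (via the ``unique cover inside $\mathcal{E}$'' observation and $\mathfrak{E}_+$ of Lemma~\ref{238971875642}); the circles $O_i,O_j$ and their disjoint union $O_i\mathrel{\dot{\cup}}O_j$ (the unique $W\in\mathcal{O}$ with $(W,E_{i+j})\in\mathfrak{E}$ into which exactly $O_i$ and $O_j$ embed from $\mathcal{O}_{\cup}$, as in the proof of Lemma~\ref{apoowjsbc}); the digraphs $\male_i,\male_j,\male_i^L,\male_j^L$ and $O_{i\to j}$; and the relation $\{(N,E_n):2\le n,\ N\in\mathcal{O}_n^{\to}\}$, together with $\mathfrak{E},\mathfrak{L},\mathfrak{M}$.

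The skeleton of the formula is the observation that $\male_i\to\male_j$ is obtained from $O_i\mathrel{\dot{\cup}}O_j$ by adjoining two fresh vertices $a,b$ and the three non-loop edges $(c_i,a),(a,b),(c_j,b)$ with $c_i\in O_i$ and $c_j\in O_j$. So I would first define $D:=O_i\mathrel{\dot{\cup}}O_j\mathrel{\dot{\cup}}E_2$ as the minimal digraph with $O_i\mathrel{\dot{\cup}}O_j\le D$ and $(D,E_{i+j+2})\in\mathfrak{E}$, and then describe $X=\male_i\to\male_j$ by: there is a chain $D\prec W_1\prec W_2\prec X$ with $(X,D)\in\mathfrak{E}$ and $L_1\nleq X$ (so $X$ arises from $D$ by adding exactly three non-loop edges); $X$ has no isolated vertex (definable as: no $X'$ with $X'\prec X$ and $(X',E_{i+j+1})\in\mathfrak{E}$); $\male_i\le X$ and $\male_j\le X$; neither $\mathcal{O}_i^{\to}$ nor $\mathcal{O}_j^{\to}$ has a member embeddable into $X$ (both circles remain ``clean''); and a short package of bridge-locating conditions, for which I bring in two auxiliary digraphs definable, from $E_i$ and $E_j$ respectively, by the same bootstrap: $P_i$, the circle $O_i$ with a directed path of length two attached at one of its vertices, and $Q_j$, the digraph $\male_j$ with an extra in-edge added to its pendant vertex from a new vertex. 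The package consists of $P_i\le X$, $Q_j\le X$ and $O_{i\to j}\nleq X$.

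The hard part is the uniqueness verification, i.e.\ that $\Phi(X,E_i,E_j)$ holds precisely when $X\cong\male_i\to\male_j$. One direction is a routine check against each clause. For the converse one takes an arbitrary loop-free $X$ on $i+j+2$ vertices with no isolated vertex, clean $i$- and $j$-cycles, the required embeddings, and runs a finite case analysis over where the two surplus vertices and the three surplus edges can sit. The configurations to eliminate by hand are exactly the degenerate ones: the copies of $\male_i$ and $\male_j$ overlapping in a vertex (killed by the no-isolated-vertex clause, which forces the $i+j+2$ vertices to split as $V(O_i)\cup V(O_j)\cup\{a,b\}$), a surplus edge re-entering a circle or running directly from $O_i$ to $O_j$, the surplus in-edge at $b$ coming from a circle rather than from $a$, and the reversed bridge $\male_j\to\male_i$ --- the conditions $P_i\le X$, $Q_j\le X$ and $O_{i\to j}\nleq X$ are tailored to rule these out, and one should double-check during the analysis whether any further clause of the same type is needed. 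This is intricate but entirely in the spirit of the proof of Lemma~\ref{apoowjsbc} and the earlier lemmas of this section, so only the non-obvious eliminations need to be written out.
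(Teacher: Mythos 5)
Your route is genuinely different from the paper's, and as written it has a real gap: the clause package does not determine $\male_i\to\male_j$ uniquely. Here is a digraph that passes every test you list. Let $X$ consist of disjoint circles $O_i$ and $O_j$, two further vertices $a,b$, and the three surplus edges $(c_i,a)$, $(a,b)$ and $(d_j,e_i)$, where $c_i,e_i$ lie on the $i$-circle and $d_j$ on the $j$-circle. Then $X$ is loop-free on $i+j+2$ vertices with $i+j+3$ edges and no isolated vertex; $\male_i\leq X$ via $(c_i,a)$ and $\male_j\leq X$ via $(d_j,e_i)$; no member of $\mathcal{O}_i^{\to}$ or $\mathcal{O}_j^{\to}$ embeds, since the only cycles of $X$ are the two original circles and neither acquires an extra edge inside its own vertex set; $P_i\leq X$ via the path $c_i\to a\to b$; $Q_j\leq X$ by sending the pendant of $\male_j$ to $e_i$ and the extra in-neighbour to the predecessor of $e_i$ on the $i$-circle, so that a circle edge supplies the second in-edge; and $O_{i\to j}\nleq X$ because the only edge between the two circles points from $O_j$ into $O_i$. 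Yet $X$ is not isomorphic to $\male_i\to\male_j$. The failure is symptomatic: with loop-free gadgets the copies of $\male_j$ and of $Q_j$ are free to reuse vertices of the $i$-circle, so nothing forces the third surplus edge to run between the two pendant vertices. (A secondary worry of the same kind: a saturated chain $D\prec W_1\prec W_2\prec X$ gives no upper bound on $X$ by itself, since the poset is not graded; the paper always pairs such chains with enough embeddability conditions to pin the digraph down, which is exactly what your clause list does not yet achieve.)

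The paper sidesteps this localization problem by doing everything with loops first. It builds $\male_i^L\mathrel{\dot{\cup}}\male_j^L$, in which the only looped vertices are the two pendants, and then passes to a cover containing the two-vertex digraph $I$ with both loops and one connecting edge; since $I$ can only embed along an edge joining two looped vertices, the single new edge is forced to be the bridge between the pendants, yielding the set $\{\male_i^L\to\male_j^L,\ \male_j^L\to\male_i^L\}$. The orientation is then fixed with the auxiliary digraphs $\male_i^{L\to}$ and $I^*$, and finally $X=M(\male_i^L\to\male_j^L)$ strips the loops. If you want to salvage the loop-free route you would need, at minimum, further clauses excluding edges from $O_j$ into $O_i$ and forcing the second in-edge at the $\male_j$-pendant to come from the $\male_i$-pendant; it is not clear this can be done cleanly without, in effect, reinventing the loop markers.
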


\begin{proof} The relation above consists of those pairs $(X,Y,Z)\in \mathcal{D}^3$ which satisfy the following. $Y,Z\in \mathcal{E}$, $E_2\leq Y, Z$ and $Y\neq Z$, meaning $Y=E_i$, $Z=E_j$, where $1<i, j$ and $i\neq j$. 
Now $O_i\mathrel{\dot{\cup}} O_j\mathrel{\dot{\cup}} E_1$ can be similarly defined as in Lemma \ref{apoowjsbc}.
From this, $O_i\mathrel{\dot{\cup}} O_j\mathrel{\dot{\cup}} E_2$ is the only digraph $W\in \mathcal{D}$ for which $O_i\mathrel{\dot{\cup}} O_j\mathrel{\dot{\cup}} E_1\prec W$ and 
$(W,O_i\mathrel{\dot{\cup}} O_j\mathrel{\dot{\cup}} E_1)\notin\mathfrak{E}.$
Now $O_i\mathrel{\dot{\cup}} O_j\mathrel{\dot{\cup}} L_2$ is the only digraph $W\in \mathcal{D}$ for which there exists $V\in\mathcal{D}$ such that
$$O_i\mathrel{\dot{\cup}} O_j\mathrel{\dot{\cup}} E_2\prec V\prec W,\;\;\; L_2\leq W, \;\; O_{i,L}\nleq W \;\text{ and }\; O_{j,L}\nleq W.  $$
$\male_i^L \mathrel{\dot{\cup}} \male_j^L$ is the only digraph $W\in \mathcal{D}$ for which there exists $V\in\mathcal{D}$ such that
$$O_i\mathrel{\dot{\cup}} O_j\mathrel{\dot{\cup}} L_2\prec V\prec W, \;\;\; \male_i^L\leq W, \;\; \male_j^L\leq W, \; \text{ but }\; \male_{i,j}^L\nleq W. $$
Let $I$ denote the digraph
\begin{equation}\label{hxgetrirow}
V(I)=\{u,v\}, \;\; E(I):=\{(u,v),(u,u),(v,v)\}.
\end{equation}
The set
\begin{equation}\label{yylalasdf}
\{\male_i^L\to\male_j^L,\male_j^L\to\male_i^L\}
\end{equation}
consists of those $W\in \mathcal{D}$ for which 
$\male_i^L \mathrel{\dot{\cup}} \male_j^L \prec W$, $I\leq W$.
The digraph $\male_i^L \mathrel{\dot{\cup}} E_1$ is defined as usual.
From this, the digraph $\male_i^L \mathrel{\dot{\cup}} L_1$ is definable as the only $W\in \mathcal{D}$ for which $\male_i^L \mathrel{\dot{\cup}} E_1 \prec W$, $L_2\leq W$ and there is no $V\in\mathcal{D}$ such that $\male_i^L\prec V$, $L_2\leq V$ and $V\leq W$.
Let $v$ denote the vertex of $\male_i^L$ that has a loop on it and let $x$ be the only vertex of $L_1$. 
Let $\male_i^{L\to}$ and $I^*$ be the following digraphs: 
$$V(\male_i^{L\to}):=V(\male_i^L \mathrel{\dot{\cup}} L_1), \;\; E(\male_i^{L\to}):=E(\male_i^L \mathrel{\dot{\cup}} L_1)\cup \{(v,x)\}$$
\begin{equation}\label{ncbhzuippqw}
V(I^*):=\{u,v,w\}, \;\; E(I^*):=\{(v,v),(w,w),(u,v),(v,w)\}. 
\end{equation}
Now $\male_i^{L\to}$ is the only digraph $W\in\mathcal{D}$ for which $\male_i^L \mathrel{\dot{\cup}} L_1 \prec W$ and $I^*\leq W$. 
From the set (\ref{yylalasdf}) we can choose $\male_i^L\to\male_j^L$ with the fact
$$\male_i^{L\to} \leq \male_i^L\to\male_j^L, \;\;\; \male_i^{L\to} \nleq \male_j^L\to\male_i^L.$$ Finally, $X=M(\male_i^L\to\male_j^L)$.
 \end{proof}

\begin{lemm}\label{74678719187892643} The following relation and set are definable:
\begin{equation}\label{mazstgf}
\{(O_{i,i},E_i):1<i\}, \;\; \{O_{i,i}:1<i\},
\end{equation}
where $O_{i,i}:=O_i\mathrel{\dot{\cup}} O_i$.
\end{lemm}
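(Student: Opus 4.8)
The plan is to pin down $O_{i,i}$ as the unique digraph that \emph{covers} $O_i$ inside the (already definable) subposet $\mathcal{O}$ and that has exactly $2i$ vertices.

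First I would deal with the bookkeeping. Given a pair $(X,Y)$, require $Y\in\mathcal{E}$ and $E_2\leq Y$, so that $Y=E_i$ with $i>1$. From $E_i$ the circle $O_i$ is definable by the relation $\{(O_n,E_n):n\geq 2\}$ of Lemma~\ref{ppghhfjasd}, and $E_{2i}$ is definable as the unique $Z$ with $(E_i,E_i,Z)\in\mathfrak{E}_+$ (Lemma~\ref{238971875642}). With $O_i$ and $E_{2i}$ in hand, I would then define $X=O_{i,i}$ by the conditions: $X\in\mathcal{O}$; $O_i<X$; there is no $Z\in\mathcal{O}$ with $O_i<Z<X$ (i.e.\ $X$ covers $O_i$ in the order induced on $\mathcal{O}$); and $(X,E_{2i})\in\mathfrak{E}$ (i.e.\ $|V(X)|=2i$). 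All of this is first-order in $\mathcal{D}'$ since $\mathcal{O}$ is definable and $<$ is $\leq\wedge\neq$. The set $\{O_{i,i}:1<i\}$ is then obtained from this relation in the usual way, by existentially quantifying out $Y$.

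For correctness I would use the description of $(\mathcal{O},\leq)$: since a circle $O_a$ embeds into $O_b$ exactly when $a=b$, and distinct circle components must be sent to distinct circle components by an injective homomorphism, the poset $(\mathcal{O},\leq)$ is isomorphic to the poset of finite multisets of integers $\geq 2$ ordered by multiset inclusion. In that poset the elements covering $\{i\}$ are precisely the two-element multisets $\{i,k\}$ with $k\geq 2$; translating back, the digraphs covering $O_i$ in $\mathcal{O}$ are exactly the $O_i\mathrel{\dot{\cup}}O_k$ with $k\geq 2$. Among these, the one with $2i$ vertices is $O_i\mathrel{\dot{\cup}}O_i=O_{i,i}$, and it is the only one; conversely $O_{i,i}$ does cover $O_i$ in $\mathcal{O}$ and has $2i$ vertices. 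Hence the displayed conditions define precisely $\{(O_{i,i},E_i):1<i\}$.

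I do not expect a real obstacle here; the only point worth care is the reason a separate lemma is needed at all. The recipe used earlier for $O_i\mathrel{\dot{\cup}}O_j$ with $i\neq j$ (``$W\in\mathcal{O}$, $|V(W)|=i+j$, $O_i,O_j\leq W$'') fails when $i=j$, since e.g.\ $O_i\mathrel{\dot{\cup}}O_a\mathrel{\dot{\cup}}O_b$ with $a+b=i$ meets the same conditions; and $O_{i,i}$ is not a cover of $O_i$ in all of $\mathcal{D}$ either (for instance $O_i<O_i\mathrel{\dot{\cup}}E_1<O_i\mathrel{\dot{\cup}}O_i$). Passing to the induced order on the definable set $\mathcal{O}$ removes exactly these obstructions, and the vertex count then finishes the job. (Alternatively, one could define $O_{i,i}$ as the largest element of $\mathcal{O}$ that is $\leq O_{i\to i}$, the latter being available from relation~\eqref{haywskz} with $i=j$; this yields the same answer but is not needed.)
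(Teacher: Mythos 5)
Your proposal is correct and follows essentially the same route as the paper: both pin down $X=O_{i,i}$ by requiring $X\in\mathcal{O}$ together with $(X,E_{2i})\in\mathfrak{E}$, differing only in the third clause, where the paper demands that $O_i$ be the unique member of $\mathcal{O}_{\cup}$ embeddable into $X$ while you instead require $X$ to cover $O_i$ in the induced order on $\mathcal{O}$. Both clauses isolate the same digraph, and your correctness argument via the multiset description of $(\mathcal{O},\leq)$ is sound.
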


\begin{proof} The relation (\ref{mazstgf}) consists of those pairs $(X,Y)\in \mathcal{D}^2$ for which the following holds. $Y\in\mathcal{E}$, meaning $Y=E_i$. $X\in \mathcal{O}$, $(X,E_{2i})\in\mathfrak{E}$, and from the set $\mathcal{O}_{\cup}$, $O_i$ is the only element that is embeddable into $X$. The corresponding set can now be easily defined.
 \end{proof}

\begin{lemm} The following relation is definable:
$$\{(O_i^*\mathrel{\dot{\cup}} O_{j,L}^*,E_i,E_j):1<i, j\}.$$
\end{lemm}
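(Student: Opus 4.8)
The plan is to obtain $O_i^*\mathrel{\dot{\cup}} O_{j,L}^*$ in two moves: first build its loop-free skeleton $O_i^*\mathrel{\dot{\cup}} O_j^*$, then redistribute loops onto it. A triple $(X,Y,Z)$ will be admitted only when $Y,Z\in\mathcal{E}$ and $E_2\le Y,Z$, so that $Y=E_i$, $Z=E_j$ with $1<i,j$; from $E_i$ we recover $O_i^*$ (Lemma~\ref{983489887772543214}) and from $E_j$ we recover $O_j^*$ and $O_{j,L}^*$ (Lemmas~\ref{983489887772543214} and~\ref{apoowjsbc}). For $O_k\in\mathcal{O}_{\cup}$ the statement $O_k\le O_i^*$ holds precisely when $i+1\le k\le 2i$, so both ``circle--size ranges'' $\{i+1,\dots,2i\}$ and $\{j+1,\dots,2j\}$ are visible to a first-order formula. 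Recall also that for circles $O_k\le O_m$ forces $k=m$, that $O_{k,k}:=O_k\mathrel{\dot{\cup}} O_k$ is definable from $E_k$ by Lemma~\ref{74678719187892643}, and that, $E_n$ being edgeless, $E_n\le G$ merely says $|V(G)|\ge n$, so $\mathfrak{E}$ expresses ``equal number of vertices''.

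Step 1 (skeleton). Using $(\ref{yiteja})$ and $\mathfrak{E}_+$ (Lemma~\ref{238971875642}) we may name $E_N$ with $N=|V(O_i^*)|+|V(O_j^*)|$. I claim $D:=O_i^*\mathrel{\dot{\cup}} O_j^*$ is the unique $W$ satisfying: $W\in\mathcal{O}$; $O_i^*\le W$ and $O_j^*\le W$; $(W,E_N)\in\mathfrak{E}$; and, for every $O_k\in\mathcal{O}_{\cup}$, (i) if $O_k\nleq O_i^*$ and $O_k\nleq O_j^*$ then $O_k\nleq W$, (ii) if $O_k\le O_i^*$ and $O_k\le O_j^*$ then $O_{k,k}\le W$, and (iii) if exactly one of $O_k\le O_i^*$, $O_k\le O_j^*$ holds then $O_{k,k}\nleq W$. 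Indeed, being in $\mathcal{O}$, $W$ is a disjoint union of loop-free circles; clauses (i)--(iii) pin the number of size-$k$ components of $W$ to $0$ for $k$ outside both ranges, to $1$ for $k$ in exactly one range, and to at least $2$ for $k$ in the overlap, whereupon $N=|V(O_i^*\mathrel{\dot{\cup}} O_j^*)|$ upgrades ``at least $2$'' to ``exactly $2$'', so $W=D$.

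Step 2 (loops). I claim $O_i^*\mathrel{\dot{\cup}} O_{j,L}^*$ is the unique $X$ with $(X,D)\in\mathfrak{M}$, $O_{j,L}^*\le X$ and $L_{j+1}\nleq X$. The first clause (with $D\in\mathcal{O}$) gives $M(X)=D$, so $X$ is $D$ with loops adjoined, at most $j$ of them by the third clause. Since embeddings of circles preserve size, $O_{j,L}^*\le X$ forces $X$ to carry, for each $k\in\{j+1,\dots,2j\}$, a looped size-$k$ component; that is $j$ looped components, hence at least $j$ loops, hence exactly $j$ loops lying one per component on circles of sizes $j+1,\dots,2j$. Comparing with the component multiset of $D$ (two size-$k$ circles when $k$ is in the overlap, one otherwise), exactly one copy of each overlap circle receives a loop while the remainder of $D$ stays loop-free, so $X=O_i^*\mathrel{\dot{\cup}} O_{j,L}^*$; which overlap copy gets looped is immaterial up to isomorphism. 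Conjoining Steps 1 and 2 and existentially quantifying out $D$ and $O_{j,L}^*$ gives the sought first-order definition, and the $i=j$ case runs verbatim.

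The genuine difficulty is the overlap of $\{i+1,\dots,2i\}$ and $\{j+1,\dots,2j\}$: when it is nonempty the target contains two circles of each overlap size, yet a loop-free $O_k$ embeds into a looped $O_{k,L}$, so the naive ``minimal digraph above $O_i^*$ and $O_{j,L}^*$'' collapses each such pair. The remedy is exactly the combination above --- demand $O_{k,k}\le W$ on overlap sizes while capping $|V(W)|$ through $\mathfrak{E}$ and $\mathfrak{E}_+$, and only afterwards transport the loops via $\mathfrak{M}$ together with $L_{j+1}\nleq X$. Everything else is routine bookkeeping with injective homomorphisms between disjoint unions of circles.
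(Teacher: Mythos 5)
Your proof is correct and follows essentially the same route as the paper: both first pin down the loop-free skeleton $O_i^*\mathrel{\dot{\cup}}O_j^*$ inside $\mathcal{O}$ by forcing $O_{k,k}$ on the overlap sizes and capping the vertex count via $\mathfrak{E}$ and $\mathfrak{E}_+$, and then attach the loops using $O_{j,L}^*$. The only (immaterial) difference is the final clause: the paper takes the minimal $X$ with $O_i^*\mathrel{\dot{\cup}}O_j^*\leq X\leq L(O_i^*\mathrel{\dot{\cup}}O_j^*)$ and $O_{j,L}^*\leq X$, whereas you fix $M(X)$ via $\mathfrak{M}$ and bound the number of loops by $L_{j+1}\nleq X$.
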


\begin{proof} The relation above is the set of triples $(X,Y,Z)\in \mathcal{D}^3$ which satisfy the following. $Y,Z\in \mathcal{E}$, $E_2\leq Y, Z$, meaning $Y=E_i$, $Z=E_j$, where $1<i, j$. Now $O_i^*\mathrel{\dot{\cup}} O_{j}^*$ is the digraph $W$ satisfying the following:
\begin{itemize}
\item $W\in \mathcal{O}$
\item If $E_x,E_y\in \mathcal{E}$ satisfy $(O_i^*,E_x)\in \mathfrak{E}$ and $(O_j^*,E_y)\in \mathfrak{E}$, then $(W,E_{x+y})\in\mathfrak{E}$.
\item For all $O_n\in\mathcal{O}_{\cup}$ that satisfy $O_n\leq O_i^*$ or $O_n\leq O_j^*$, $O_n\leq W$  holds.
\item For all $O_n\in\mathcal{O}_{\cup}$ which satisfy $O_n\leq O_i^*$ and $O_n\leq O_j^*$, $O_{n,n}\leq W$ holds.
\end{itemize}
Finally, $X$ is the minimal digraph with $O_i^*\mathrel{\dot{\cup}} O_{j}^*\leq X\leq L(O_i^*\mathrel{\dot{\cup}} O_{j}^*)$ and $O_{j,L}^*\leq X$.
 \end{proof}

\begin{definition}\label{75387671982738788} Let us denote the vertices of $O_n^*$ by 
$$V(O_n^*):=\{v_{i,j}: 1\leq i \leq n, \; 1\leq j\leq n+i\} $$
such that the circle $O_{n+i}$ consists of $\{v_{i,j}: 1\leq j\leq n+i\}$. Similarly, let us denote the vertices of $O_{m,L}^*$ by
$$V(O_{m,L}^*):=\{v^{i,j}: 1\leq i \leq m, \; 1\leq j\leq m+i\} $$
such that the circle $O_{m+i,L}$ consists of $\{v^{i,j}: 1\leq j\leq m+i\}$ and the loops are on the vertices $\{v^{i,1}: 1\leq i\leq m\}$.  
For a map $\alpha: [n]\to [m]$, we define the digraph $F_{\alpha}(n,m)$ as
$$V(F_{\alpha}(n,m)):=V(O_n^*\mathrel{\dot{\cup}} O_{m,L}^*),$$
$$E(F_{\alpha}(n,m)):=E(O_n^*\mathrel{\dot{\cup}} O_{m,L}^*)\cup \{(v_{i,1},v^{\alpha(i),1}):1\leq i\leq n\}.$$
Let
$$\mathcal{F}(n,m):=\{F_{\alpha}(n,m): \; \alpha: [n]\to [m]\}.$$
\end{definition}

\begin{lemm}The following relation is definable:
\begin{equation}\label{1kajszt}
\{(F_{\alpha}(n,m),E_n,E_m):1\leq n,m, \;\; \alpha: [n]\to [m]\}.
\end{equation}
\end{lemm}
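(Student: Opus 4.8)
The plan is to characterize the digraphs $F_{\alpha}(n,m)$, as $\alpha$ ranges over all maps $[n]\to[m]$, as exactly those digraphs obtained from the ``skeleton'' $S:=O_n^*\mathrel{\dot{\cup}} O_{m,L}^*$ by adding precisely $n$ edges, one out of each loop-free cycle of $S$, each running from the distinguished vertex of that cycle into the loop-vertex of one of the loop-carrying cycles of $S$. First I record what is already available: by the preceding lemma $S$ is definable from $E_n$ and $E_m$; from $E_n$ one recovers $E_k$ for all $k\le 2n$ through the doubling relation (\ref{hatluevp}), likewise $E_l$ for $l\le 2m$ from $E_m$, and from each $E_k$ the cycle $O_k$ via $\{(O_n,E_n):n\ge2\}$, so the loop-free cycles of $O_n^*$ are exactly the $O_k\in\mathcal{O}_{\cup}$ with $O_k\le O_n^*$ and the loop-carrying cycles of $O_{m,L}^*$ are exactly the $O_l\in\mathcal{O}_{\cup}$ with $O_l\le O_{m,L}^*$. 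I note also that distinct maps $\alpha$ yield non-isomorphic, hence $\le$-incomparable, digraphs $F_{\alpha}(n,m)$, since the pairwise distinct cycle sizes label the cycles and the added edges then read off $\alpha$; this is what will make each $F_{\alpha}(n,m)$ a genuinely extremal element. I will treat $n,m\ge2$; the boundary cases $n=1$ or $m=1$ go through by the same scheme, the few relevant small digraphs being individually available anyway.

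Next I would introduce an auxiliary gadget $P_{k,l}$: a copy of $O_k$ and a copy of $O_{l,L}$ together with one edge from the distinguished vertex of $O_k$ to the loop-vertex of $O_{l,L}$. This is $O_{k\to l}$ with one further loop added on the target of the connecting edge, and the relation $\{(P_{k,l},E_k,E_l):k,l\ge2\}$ is definable from $E_k$ and $E_l$ by exactly the methods already used for $\{(O_{i\to j},E_i,E_j)\}$ and for $\male_i\to\male_j$; the point is to single out, among the covers of $O_{k\to l}$ carrying one loop, the one whose looped vertex is the vertex of in-degree $2$, which is pinned down by the (non-)embeddability of suitably chosen fixed small digraphs.

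With these in hand I would let the relation (\ref{1kajszt}) hold of $(X,Y,Z)$ precisely when $Y,Z\in\mathcal{E}$ (so $Y=E_n$, $Z=E_m$) and: there is a chain $S=X_0\prec X_1\prec\dots\prec X_n=X$ with $(S,X)\in\mathfrak{E}$ — which, since vertex counts cannot decrease along a cover chain, forces every $X_t$ to have $|V(S)|$ vertices, hence $X$ to be $S$ with exactly $n$ edges added; for every $O_k\in\mathcal{O}_{\cup}$ with $O_k\le O_n^*$ there is $O_l\in\mathcal{O}_{\cup}$ with $O_l\le O_{m,L}^*$ and $P_{k,l}\le X$; and, finally, a list of ``forbidden gadget'' clauses, most importantly that $X$ contains no copy of two loop-carrying cycles $O_{l,L}$, $O_{l',L}$ (with $O_l,O_{l'}\le O_{m,L}^*$) joined by an edge from a vertex of the first into the loop-vertex of the second, whose purpose is to forbid a loop-carrying cycle from impersonating the $O_k$-part in a witnessing embedding of $P_{k,l}$. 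For the ``if'' direction of correctness I would check that every $F_{\alpha}(n,m)$ satisfies all these clauses. For the ``only if'' direction I would argue, using that $X$ has exactly $n$ more edges than $S$ and that the forbidden-gadget clauses force each witnessing $P_{k,l}$ to route its $O_k$-part through an honest loop-free $k$-cycle that then emits an edge into a loop-vertex, that all $n$ added edges must be of the prescribed kind, one per loop-free cycle, so that $X\cong F_{\alpha}(n,m)$ for the $\alpha$ thereby determined.

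I expect the definability of $P_{k,l}$ and of the forbidden small gadgets to be routine, imitating the earlier lemmas. The main obstacle will be the ``only if'' verification in the presence of coincidences between the size sets $\{n+1,\dots,2n\}$ and $\{m+1,\dots,2m\}$: one must make certain that no spurious directed cycle assembled from added edges, and no loop-carrying cycle that happens to have the same size as a loop-free one, can be used to satisfy one of the $P_{k,l}$-conditions without a genuine added edge out of the corresponding loop-free cycle — this is what the forbidden-gadget clauses together with the tight budget of exactly $n$ added edges must secure — while at the same time the chosen list of forbidden gadgets must not accidentally exclude any genuine $F_{\alpha}(n,m)$, for which the key facts are that $O_r\le O_{r,L}$ but $O_{r,L}\nleq O_r$ and that in $F_{\alpha}(n,m)$ no loop-carrying cycle emits any edge outside itself.
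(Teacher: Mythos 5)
Your overall strategy is the same one the paper uses: single out $F_{\alpha}(n,m)$ among the extensions of the skeleton $S=O_n^*\mathrel{\dot{\cup}}O_{m,L}^*$ by (i) fixing the ``size'' of $X$ relative to $S$, (ii) demanding, for each loop-free cycle size, a gadget witnessing an outgoing edge into a looped vertex, and (iii) excluding a list of parasitic configurations. The differences are cosmetic: the paper takes $X$ \emph{minimal} subject to its conditions where you prescribe a cover chain of length exactly $n$, and it reuses the already-defined gadget $\male_i^L$ where you introduce a new gadget $P_{k,l}$ whose definability you assert but do not carry out. The substantive problem is that your proposal stops exactly where the proof has to begin. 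You name one forbidden gadget, refer to ``a list of forbidden gadget clauses,'' and then expressly defer the central verification --- that spurious directed cycles assembled from added edges (e.g.\ a chord of a long cycle creating a new $O_k$ with $k\in\{n+1,\dots,2n\}$) and coincidences between the size sets $\{n+1,\dots,2n\}$ and $\{m+1,\dots,2m\}$ cannot fool the $P_{k,l}$-conditions --- as ``the main obstacle.'' That list and that verification \emph{are} the lemma; without them you have a plan, not a proof. For comparison, the paper's complete condition set is: $(S,X)\in\mathfrak{E}\cap\mathfrak{L}$; $\male_i^L\le X$ for every $O_i\le O_n^*$; no member of $\mathcal{O}_i^{\to}$ embeds into $X$ for any cycle size $i$ occurring in $S$ (this kills chords and extra edges on cycles); and two further cover-type exclusions involving $\male_i$ and $\male_i^L$ that force every edge leaving a cycle to end on a looped vertex and forbid a second nearby loop --- after which minimality finishes the argument.

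A second, concrete gap: you control only the vertex count ($(S,X)\in\mathfrak{E}$) and not the loop count. Consequently the $n$ cover steps in your chain may add \emph{loops} as well as ordinary edges. This undercuts the budget argument ``exactly $n$ added edges, one leaving each loop-free cycle,'' and when the two size ranges overlap (e.g.\ $n=m$) an added loop converts a loop-free cycle $O_k$ of $O_n^*$ into a fake copy of $O_{k,L}$, creating new ways to satisfy $P_{k',k}\le X$ that your one named exclusion (which only concerns edges \emph{between} loop-carrying cycles) does not address. One can try to argue that the tight budget of $n$ already excludes added loops, but that is itself a nontrivial counting claim you would have to prove; the paper sidesteps the whole issue by additionally imposing $(S,X)\in\mathfrak{L}$. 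You should do the same, and then actually write down and verify the full exclusion list.
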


\begin{proof} The relation above consists of those triples $(X,Y,Z)\in \mathcal{D}^3$ that satisfy the following. $Y,Z\in \mathcal{E}$, meaning $Y=E_n$, $Z=E_m$, where $1\leq n, m$. Now $X$ is a minimal digraph with the following conditions:
\begin{itemize}
\item $O_n^*\mathrel{\dot{\cup}} O_{m,L}^*\leq X$ and
$(O_n^*\mathrel{\dot{\cup}} O_{m,L}^*,X)\in\mathfrak{E}\cap\mathfrak{L}$.
\item $O_i\leq O_n^*$ implies $\male_i^L\leq X$.
\item $O_i\leq O_n^*\mathrel{\dot{\cup}} O_{m,L}^*$ implies there is no $W\in\mathcal{O}_i^{\to}$, for which $W\leq X$.
\item There is no $V$ for which $V\leq X$ and $\male_i\prec V$, such that $\male_i\leq X$ and $\male_i^L \nleq V$.
\item There is no $\male_i^L\prec V$ for which $V\leq X$ and $L_2 \leq V$. 
\end{itemize}
 \end{proof}

\begin{lemm}\label{7682} The following relation is definable:
\begin{equation}\label{jajybccvtep}
\{(F_{\id_{[n]}}(n,n),E_n,E_n):1\leq n\}.
\end{equation}
\end{lemm}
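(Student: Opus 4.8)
The plan is to pick out $F_{\id_{[n]}}(n,n)$ from the class $\mathcal{F}(n,n)$ by a first-order property that characterizes the identity map among all $\alpha\colon[n]\to[n]$. The previous lemma gives us the relation $\{(F_{\alpha}(n,m),E_n,E_m)\}$, so in particular the set $\mathcal{F}(n,n)$ together with its parameter $E_n$ is available; what remains is to say ``$\alpha=\id_{[n]}$'' inside $\mathcal{D}'$. The natural handle is the $\male$-gadgets already built: in $F_{\alpha}(n,n)$ the circle $O_{n+i}$ of the ``upper'' copy $O_n^*$ is attached via the edge $(v_{i,1},v^{\alpha(i),1})$ to the looped circle $O_{n+\alpha(i),L}$ of the ``lower'' copy $O_{n,L}^*$. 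Since the circles in $O_n^*$ and in $O_{n,L}^*$ have pairwise distinct lengths $n+1,\dots,2n$, the length of a circle records its index, and the identity map is exactly the one for which every upper circle $O_{n+i}$ is joined to the lower circle of the \emph{same} length, i.e.\ $O_{n+i,L}$.

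First I would fix, for $2\le k\le$ (say) $2n$, definable ``probe'' digraphs that detect an edge from the degree-one-type vertex of a circle of length $k$ in $O_n^*$ into a looped circle of length $k$ in $O_{n,L}^*$; concretely this is the digraph $O_{k\to k,L}$ obtained from $O_k \mathrel{\dot\cup} O_{k,L}$ by adding one edge $(v_1,v^1)$, and more usefully the connected piece $\male_k\to\male_{k}^{L}$-style gadget that embeds into $F_\alpha(n,n)$ iff $\alpha$ sends the index-$i$ point (where $n+i=k$) to the index-$i$ point. All these auxiliary digraphs have bounded ``shape'' and are definable by the same $\prec$-and-$\mathfrak{E}$-and-$\mathfrak L$ bookkeeping used repeatedly above, once $E_n$ (hence $E_k$ via $\mathfrak{E}_+$ and \eqref{hatluevp}) is in hand. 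Then $F_{\id_{[n]}}(n,n)$ is the unique $X$ with $(X,E_n,E_n)\in\eqref{1kajszt}$ such that for every $O_k\in\mathcal{O}_\cup$ with $O_k\le O_n^*$ the corresponding ``same-length link'' gadget embeds into $X$, while no ``wrong-length link'' gadget does; equivalently, for each $i$ the unique edge out of the circle $O_{n+i}$ of $O_n^*$ lands in $O_{n+i,L}$.

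The key steps in order: (1) record $E_n$ from $X$ via $\mathfrak{E}$ and produce $E_k$ for $n<k\le 2n$ using $\mathfrak{E}_+$ and \eqref{hatluevp}; (2) for each such $k$ define the two small ``link detector'' digraphs (same-length vs.\ different-length) using only covers, $\mathfrak{E}$, $\mathfrak L$, $\mathfrak M$ and previously defined gadgets such as $\male_k$, $\male_k^L$, $O_{k,L}$, $O_{i\to j}$, $\male_i\to\male_j$; (3) write the defining formula for \eqref{jajybccvtep} as: $(X,E_n,E_n)\in\eqref{1kajszt}$ and for all $O_k\in\mathcal O_\cup$ with $O_k\le O_n^*\mathrel{\dot\cup}O_{n,L}^*$ split appropriately, the same-length detector embeds and every different-length detector does not; (4) verify uniqueness, i.e.\ that these conditions pin down $\alpha=\id$, which is immediate since an $\alpha\colon[n]\to[n]$ with $\alpha(i)=i$ for all $i$ is the identity, and the length-distinctness of the circles makes the embedding conditions faithfully test $\alpha(i)=i$.

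The main obstacle I expect is step (2): making sure the ``link detector'' gadgets really distinguish an edge into the length-$k$ looped circle from an edge into a length-$\ell$ looped circle with $\ell\neq k$, and that they do so via embeddability alone, without accidentally embedding for the wrong $\alpha$ because $O_n^*\mathrel{\dot\cup}O_{n,L}^*$ already contains many circles of each relevant length. The resolution is to anchor the detector at the \emph{loop} and the \emph{unique degree-one vertex} of a $\male$-type circle, as in Definition~\ref{13541235} and Lemma~\ref{apoowjsbc}: a connected digraph consisting of a length-$k$ bare circle joined by one edge to a length-$k$ looped-$\male$ configuration embeds into $F_\alpha(n,n)$ precisely when some upper circle of length $k$ is linked to a lower looped circle of the same length, and—using that within $O_n^*$ and within $O_{n,L}^*$ all circle lengths are distinct—this forces $\alpha(i)=i$ for the unique $i$ with $n+i=k$. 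Ranging $k$ over $n+1,\dots,2n$ (equivalently over $O_k\le O_n^*$) then yields $\alpha=\id_{[n]}$, completing the definition. As in the surrounding lemmas, the detailed covering-relation bookkeeping is routine once the gadgets are named, so I would state the gadgets, assert their embeddability behavior, and assemble the formula.
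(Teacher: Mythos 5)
Your proposal is correct and follows essentially the same route as the paper: restrict \eqref{1kajszt} to $Y=Z$ and read off $\alpha$ from which circle-to-circle link subgraphs embed, using the pairwise distinct circle lengths to make this faithful. The paper does it more economically—no new detector gadgets are needed, since the already-definable $O_{i\to j}$ suffices and the single condition $O_{i\to j}\leq X \Rightarrow E_i=E_j$ (for $i,j\geq 2$) pins down $\alpha=\id_{[n]}$; your additional positive ``same-length link embeds'' clause is redundant because $\alpha$ is total.
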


\begin{proof} The relation in question consists of those triples $(X,Y,Z)\in (\ref{1kajszt})$ for which $Y=Z\in\mathcal{E}$ and for $i,j \geq 2$ we have
\[O_{i\to j}\leq X \Rightarrow E_i=E_j. \]
 \end{proof}

\begin{lemm}The following relation is definable:
\begin{equation}\label{vyniurteaal}
\{(F_{\alpha}(n,m),F_{\beta}(m,l),F_{\beta\circ\alpha}(n,l), E_n,E_m,E_l):1\leq n,m,l, \;\; \alpha: [n]\to [m], \;\; \beta: [m]\to [l]\}.
\end{equation}
\end{lemm}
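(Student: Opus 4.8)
First I would observe that the three digraph entries come already tagged with their parameters: for a tuple to lie in \eqref{vyniurteaal} it is necessary that $(Y_1,E_n,E_m)$, $(Y_2,E_m,E_l)$ and $(Y_3,E_n,E_l)$ all lie in the relation \eqref{1kajszt}. Hence, having quantified over digraphs with these three properties, the only thing left to say is that the function encoded by $Y_3$ is the composite of the ones encoded by $Y_1$ and $Y_2$.

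The plan is to build, by a first-order definition, a ``stacked'' gadget $G_{\alpha,\beta}$, obtained morally by gluing $F_\alpha(n,m)$ to $F_\beta(m,l)$ along their common middle family of circles: the looped circles $O_{m+1,L},\dots,O_{2m,L}$ of the $O_{m,L}^*$-part of $Y_1$ are identified, matched by size, with the loop-free circles $O_{m+1},\dots,O_{2m}$ of the $O_m^*$-part of $Y_2$, the loops surviving. Thus $G_{\alpha,\beta}$ consists of the three circle families $O_n^*$, $O_{m,L}^*$, $O_{l,L}^*$ together with the $\alpha$-connectors from the $n$-level circles to the loop-vertices of the $m$-level circles and the $\beta$-connectors from the loop-vertices of the $m$-level circles to the loop-vertices of the $l$-level circles; up to isomorphism this is determined by $\alpha$ and $\beta$. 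I would characterize it by a minimality condition in the exact style of the proofs for \eqref{1kajszt} and \eqref{ffbbzzzt}: it contains a looped-middle copy of $Y_1$ and one of $Y_2$ (the latter obtained from $Y_2$ by putting a loop on each of its loop-free, i.e.\ $m$-level, circles), the three families fill the size ranges $[n+1,2n]$, $[m+1,2m]$, $[l+1,2l]$, each circle occurs once, every $n$- and every $m$-level circle emits exactly one connector ending at the loop-vertex of a circle one level up, and $G_{\alpha,\beta}$ carries no further edges. The new wrinkle is that the three size ranges may overlap, so the levels are distinguished not by size but by role: an $n$-level circle is loop-free, an $m$-level circle has a loop and emits a connector, an $l$-level circle has a loop and emits no connector; this is meaningful because $\alpha$ and $\beta$ are total, so every $n$- and every $m$-level circle really does emit a connector, irrespective of surjectivity.

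Now the composite is visible in $G_{\alpha,\beta}$: the connector leaving the $i$-th $n$-level circle reaches the loop-vertex of the $\alpha(i)$-th $m$-level circle, whose own connector reaches the loop-vertex of the $\beta(\alpha(i))$-th $l$-level circle. It then remains to pin down $Y_3$ as the unique $X$ with $(X,E_n,E_l)\in\eqref{1kajszt}$ such that the digraph obtained from $G_{\alpha,\beta}$ by adding, for each input circle, a direct connector from its first vertex to the first vertex of the $l$-level circle reached from it by a two-step connector path, and then deleting the middle family, is isomorphic to $X$. Both of these operations I would realize through extremality conditions for $\leq$ together with $\mathfrak{E}$, $\mathfrak{L}$, $\mathfrak{M}$, padding by disjoint copies of large $O_{n'}^*$'s, and detecting the two-step paths via suitable $O_{a\to b}$-type sub-digraphs.

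I expect the real obstacle to be exactly this recovery of $Y_3$ from $G_{\alpha,\beta}$, not the construction of $G_{\alpha,\beta}$. Since the $F$-gadgets are in general disconnected --- their weakly connected components are ``stars'' of several input circles hung on one output circle --- making the embeddability conditions rigid enough that they cannot confuse one target circle with another will, I anticipate, need a weakly-connected-component counting argument in the spirit of Lemma \ref{weiurzwe}, using the large and otherwise unembeddable circles of the $O^*$-families as a ruler. With that rigidity secured, \eqref{vyniurteaal} is defined by conjoining the three \eqref{1kajszt}-memberships with the existence of a $G_{\alpha,\beta}$ as above and the stated characterization of $Y_3$ relative to it.
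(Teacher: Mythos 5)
Your opening observation is right and matches the paper: one first restricts to triples with $(Y_1,E_n,E_m)$, $(Y_2,E_m,E_l)$, $(Y_3,E_n,E_l)$ in \eqref{1kajszt}, and only the composition condition remains. But from there you miss the one idea that makes this lemma essentially free, and as a result you replace a one-line condition by a construction whose hardest step you do not carry out. The point is that the function $\alpha$ encoded by $F_{\alpha}(n,m)$ is already readable, circle by circle, through the previously defined relation \eqref{haywskz}: the only edges of $F_{\alpha}(n,m)$ joining two distinct circles are the connectors $(v_{i,1},v^{\alpha(i),1})$, so $O_{i\to j}\leq F_{\alpha}(n,m)$ holds exactly when $j-m=\alpha(i-n)$. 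Since the output circles of $Y_1$ and the input circles of $Y_2$ carry the same sizes $m+1,\dots,2m$ and hence label the same element of $[m]$, the entire content of ``$Y_3$ encodes $\beta\circ\alpha$'' is the single first-order implication
$$\bigl(O_{i\to j}\leq Y_1 \;\wedge\; O_{j\to k}\leq Y_2\bigr)\;\Rightarrow\; O_{i\to k}\leq Y_3,$$
which, because $\alpha$ and $\beta$ are total and $Y_3\in\mathcal{F}(n,l)$ already encodes \emph{some} total function, forces that function to equal $\beta\circ\alpha$. No gluing, no middle family, no recovery step is needed.

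By contrast, your plan routes everything through a stacked gadget $G_{\alpha,\beta}$ and then a surgery (``add a direct connector along each two-step path, delete the middle family'') to recover $Y_3$. You yourself identify this recovery as ``the real obstacle'' and only anticipate that a counting argument in the spirit of Lemma \ref{weiurzwe} would handle it; that step is not actually performed, and expressing such a graph surgery by extremality conditions in the embeddability order is precisely the kind of thing this paper has to work hard for elsewhere. So as written the proposal is not a proof: its decisive step is a conjecture. The fix is simply to drop $G_{\alpha,\beta}$ altogether and use the $O_{i\to j}$ test above, which is the paper's argument.
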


\begin{proof} The relation in question is the set of those 6-tuples $(X_1,\dots,X_6)\in\mathcal{D}^6$ which satisfy the following. 
$X_4,X_5,X_6\in \mathcal{E}$, meaning $X_4=E_n$, $X_5=E_m$ and $X_6=E_l$ where $1\leq n, m,l$. 
$X_1\in \mathcal{F}(n,m)$, $X_2\in \mathcal{F}(m,l)$ and $X_3\in \mathcal{F}(n,l)$. Finally:
\[(O_{i \to j}\leq X_1\text{ and } O_{j\to k}\leq X_2) \; \Rightarrow \;  O_{i \to k}\leq X_3.\]
 \end{proof}

\begin{definition}\label{723789811980012} There is a bijection between the digraphs $G\overset{\underline{v}}{\leftarrow} O_n^*$ and the elements of $\text{ob}(\mathcal{CD})$. Let us observe that the vertices of $G$ are labeled with the circles $O_{n+1},O_{n+2}, \dots, O_{2n}$ in $G\overset{\underline{v}}{\leftarrow} O_n^*$. On the other hand, in $\text{ob}(\mathcal{CD})$, they are labeled with $1,\dots,n$. The element of $\text{ob}(\mathcal{CD})$ that corresponds to $G\overset{\underline{v}}{\leftarrow} O_n^*$ will be denoted by $(G\overset{\underline{v}}{\leftarrow} O_n^*)_{\mathcal{CD}}$ from now on.
\end{definition}

\begin{lemm}\label{5634782}The following relation is definable:
\begin{equation}\label{ghajsvxclhhg}
\begin{split}
\{(X, F_{\alpha}(n,m), Y)\in\mathcal{D}^3: \; &X=G\overset{\underline{v}}{\leftarrow} O_n^*, \; Y=H\overset{\underline{w}} {\leftarrow} O_m^* \text{ for some $\underline{v}$ and $\underline{w}$}, \text{ and}\\
&((X)_{\mathcal{CD}}, \alpha, (Y)_{\mathcal{CD}})\in\mathrm{hom}((X)_{\mathcal{CD}},(Y)_{\mathcal{CD}})\} 
\end{split}
\end{equation}
\end{lemm}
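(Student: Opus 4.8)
The plan is to characterize the morphism relation of $\mathcal{CD}$ purely in terms of embeddability of the $F_\alpha$-gadgets into the $G\overset{\underline{v}}{\leftarrow}O_n^*$-gadgets, and then to invoke the earlier definability lemmas. Recall that $X = G\overset{\underline{v}}{\leftarrow}O_n^*$ carries a labelling of the vertices of $G$ by the circles $O_{n+1},\dots,O_{2n}$, via the extra edges $(v_{i,1},v^i)$, and likewise $Y = H\overset{\underline{w}}{\leftarrow}O_m^*$ labels the vertices of $H$ by $O_{m+1},\dots,O_{2m}$. A map $\alpha\colon[n]\to[m]$ encodes a homomorphism $(X)_{\mathcal{CD}}\to(Y)_{\mathcal{CD}}$ precisely when, whenever $(i,j)$ is an edge of $(X)_{\mathcal{CD}}$ (i.e. $(v^i,v^j)\in E(G)$ read through the labelling), the pair $(\alpha(i),\alpha(j))$ is an edge of $(Y)_{\mathcal{CD}}$. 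The key observation is that $F_\alpha(n,m)$ is exactly the gadget that records the graph of $\alpha$: its extra edges $(v_{i,1},v^{\alpha(i),1})$ match the $O_{n+i}$-circle to the $O_{m+\alpha(i),L}$-circle, the loops on the target side distinguishing the ``codomain copy''. So $F_\alpha(n,m)\leq$ (a suitable amalgam built from) $X$ and $Y$ should hold iff $\alpha$ is the map witnessed by the labellings, and a further embeddability condition should enforce the homomorphism property.

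Concretely, I would first define, from $(X,E_n)\in(\ref{yiteja})$-type relations, the fact that $X = G\overset{\underline v}{\leftarrow}O_n^*$ for \emph{some} $G$ and $\underline v$ — this follows from relation (\ref{baassppwe}) together with the already-definable set $\{G\overset{\underline v}{\leftarrow}O_n^*\}$; similarly for $Y$ with parameter $E_m$. Next, using relation (\ref{1kajszt}) I extract that $F_\alpha(n,m)\in\mathcal{F}(n,m)$ with the correct parameters $E_n,E_m$. The heart of the definition is to express that the $\alpha$ encoded in $F_\alpha(n,m)$ \emph{agrees} with the labellings in $X$ and $Y$ and is a homomorphism: for each pair of circle-sizes $i,j$ appearing among $O_{n+1},\dots,O_{2n}$, if the corresponding vertices $v^i,v^j$ of $G$ form an edge in $X$ — detectable because $O_{n+i\to n+j}\leq X$, using the definable relation (\ref{haywskz}) — then, reading $\alpha(i)=k$ and $\alpha(j)=l$ off $F_\alpha(n,m)$ via $O_{n+i\to m+k,L}$-type subconfigurations, we must have $O_{m+k\to m+l}\leq Y$. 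All the sub-patterns involved ($O_{i\to j}$, the circles, the loop-decorated circles $O_{i,L}$, the $F_\alpha$-gadgets) are definable by the preceding lemmas, so the whole condition is expressible.

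The main obstacle I anticipate is bookkeeping the \emph{indexing shift}: the vertex originally numbered $i\in[n]$ of an object of $\mathcal{CD}$ is represented by the circle $O_{n+i}$ inside $G\overset{\underline v}{\leftarrow}O_n^*$ but by $O_{m+\alpha(i),L}$ when viewed as a target through $F_\alpha(n,m)$, and one must make sure these circle sizes are large enough to be unambiguously detected (the circles in $O_n^*$ have sizes strictly between $n$ and $2n$, so they cannot collapse into one another or into anything of size $\leq n$ — this is exactly the ``so big circles'' trick used in Lemma~\ref{fiduhweori}). The delicate point is to rule out ``accidental'' embeddings: one must check that $F_\alpha(n,m)$ cannot embed into the relevant amalgam in a way that misrepresents $\alpha$, which requires the loops on $O_{m,L}^*$ (absent from $O_n^*$) to pin down orientation, and requires that an embedding cannot send one labelled circle onto a differently-sized one. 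Once these rigidity facts are in hand — and they are of the same flavour as the arguments already carried out in Lemma~\ref{apoowjsbc} and the $F_\alpha$-lemmas — the translation of ``$\alpha$ is a homomorphism $(X)_{\mathcal{CD}}\to(Y)_{\mathcal{CD}}$'' into a first-order formula over the definable relations is routine, if tedious, and I would present it as a conjunction of: (i) $X$ and $Y$ are gadgets of the stated form with parameters $E_n$, $E_m$; (ii) $F_\alpha(n,m)\in\mathcal{F}(n,m)$ with parameters $E_n,E_m$; (iii) the compatibility-of-labellings-and-homomorphism condition above, quantifying over the finitely-many relevant circle sizes via the definable set $\mathcal{O}_{\cup}$ and the relation (\ref{haywskz}).
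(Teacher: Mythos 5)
Your overall skeleton matches the paper's: pin down $X$ and $Y$ via relation (\ref{baassppwe}), pin down $F$ via (\ref{1kajszt}), read $\alpha$ off $F$ through the $O_{i\to k}$ subgadgets, and then state the homomorphism condition as an implication between subgraph-containment facts about $X$, $F$ and $Y$. But the concrete mechanism you propose for detecting an edge of $(X)_{\mathcal{CD}}$ is wrong, and the resulting formula would not define the relation. You claim that ``$(v^i,v^j)\in E(G)$'' is detectable by $O_{n+i\to n+j}\leq X$. It is not: in $X=G\overset{\underline{v}}{\leftarrow}O_n^*$ the vertex $v^i$ of $G$ is \emph{not} a vertex of the circle $O_{n+i}$; it is a pendant vertex receiving the single edge $(v_{i,1},v^i)$ from that circle. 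An embedding of $O_{n+i\to n+j}$ would require an edge from a vertex lying on a circle of length $n+i$ to a vertex lying on a circle of length $n+j$, and no such edge exists in $X$ (the only edges leaving the circle $O_{n+i}$ end at $v^i$, which lies on no circle of length $>n$). So your antecedent is always false, the homomorphism condition becomes vacuous, and your formula would accept every triple with $X,Y$ gadgets and $F\in\mathcal{F}(n,m)$, whether or not $\alpha$ is a homomorphism. This is exactly why the paper introduces and proves definable the gadgets $\male_i\to\male_j$ (two circles each with a pendant vertex, plus an edge between the two pendants) and $\male_i^L$: the correct detection conditions are $\male_i\to\male_j\leq X$ for an edge between the vertices labelled by $O_i$ and $O_j$, and $\male_i^L\leq X$ for a loop on the vertex labelled by $O_i$. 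These gadgets are conspicuously absent from your list of sub-patterns.

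A second, related omission: even with the right detection gadgets you must split the conclusion by cases. When $\alpha(i)=\alpha(j)$ an edge $(v^i,v^j)$ of $G$ must be matched by a \emph{loop} on the image vertex in $H$, i.e.\ $\male_k^L\leq Y$, not by $\male_k\to\male_l\leq Y$; and loops of $G$ (the $\male_i^L\leq X$ case) need their own clause forcing $\male_k^L\leq Y$. Your single condition ``$O_{m+k\to m+l}\leq Y$'' handles neither, quite apart from suffering from the same circle-versus-pendant confusion on the $Y$ side. The index bookkeeping and rigidity concerns you raise are real but are the easy part; the missing idea is the pendant-vertex gadget that actually transports the edge relation of $G$ and $H$ into first-order embeddability statements.
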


\begin{proof} The relation in question is the set of those pairs $(X,F,Y)\in \mathcal{D}^3$ which satisfy the following. 
There exist $G$ and $H$ such that $(G,X)\in (\ref{baassppwe})$ and $(H,Y)\in (\ref{baassppwe})$. Finally, $F$ satisfies
\begin{itemize}
\item $(F,E_n,E_m)\in (\ref{1kajszt})$,
\item $(\male_i\to\male_j\leq G\overset{\underline{v}}{\leftarrow} O_n^*(=X),\;\; O_i,O_j\leq O_n^*\text{ and } O_{i\to k},O_{j\to l}\leq F) \Longrightarrow$ \\
$((O_k\neq O_l \text{ and } \male_k\to\male_l\leq H\overset{\underline{w}}{\leftarrow} O_m^*) \;\; \vee \;\; (O_k=O_l \text{ and } \male_k^L\leq H\overset{\underline{w}}{\leftarrow} O_m^*))$, 
\item $(\male_i^L\leq G\overset{\underline{v}}{\leftarrow} O_n^*, \;\; O_i\leq O_n^*\text{ and } O_{i\to k}\leq F) \Rightarrow \male_k^L\leq H\overset{\underline{w}}{\leftarrow} O_m^*.$
\end{itemize}
 \end{proof}

The proof of Theorem \ref{9ahgfa} is now properly prepared for, we only need to put the pieces together.

\begin{proof}[Proof of the main theorem: Theorem \ref{9ahgfa}]
We have already seen in Section \ref{pakdhtgft} that all relations first-order definable in $\mathcal{D}'$ are defiable in $\mathcal{CD}'$ as well. So we only need to deal with the converse. 
We wish to build a copy of $\mathcal{CD}'$ inside $\mathcal{D}'$ so that all things we can formulate in the first-order language of $\mathcal{CD}'$ becomes accesible in its model in $\mathcal{D}'$.
Let the set of objects be 
$$\{G\overset{\underline{v}}{\leftarrow} O_n^*: G\in\mathcal{D},\; |V(G)|=n \text{ and $\underline{v}$ is a vector of the vertices of $G$}\},$$
and the set of morphisms be \eqref{ghajsvxclhhg}.
We can define both as Lemma \ref{5634782} shows. 
Identity morphisms can be defined with Lemma \ref{7682}. 
For the triples
$$(X_1, Z_1, Y_1),(X_2, Z_2, Y_2),(X_3, Z_3, Y_3) \in \mathcal{D}^3$$
the condition $(X_i, Z_i, Y_i)\in (\ref{ghajsvxclhhg})$ ensures that there exist $\alpha_i$ such that
$$((X_i)_{\mathcal{CD}}, \alpha_i, (Y_i)_{\mathcal{CD}})\in \mathrm{hom}((X_i)_{\mathcal{CD}},(Y_i)_{\mathcal{CD}}).$$
Moreover, if we suppose  $Y_1=X_2$, $X_3=X_1$, $Y_3=Y_2$ and that there exists a 6-tuple in (\ref{vyniurteaal}) of the form $(Z_1, Z_2, Z_3,\ast,\ast,\ast)$, we have forced 
$$((X_1)_{\mathcal{CD}}, \alpha_1, (Y_1)_{\mathcal{CD}})
((X_2)_{\mathcal{CD}}, \alpha_2, (Y_2)_{\mathcal{CD}})
 =((X_3)_{\mathcal{CD}}, \alpha_3, (Y_3)_{\mathcal{CD}}) .$$ 
The four constants in $\mathcal{CD}'$ require 4 digraphs, say, 
\begin{equation}\label{cnjusiuduh}
C_1, C_2, C_3, \text{ and } C_4
\end{equation}
of $\mathcal{D}'$ to be defined such that
$$ (C_1)_{\mathcal{CD}}={\bf E}_1, \; (C_2)_{\mathcal{CD}}={\bf I}_2$$
and $C_3$, and $C_4$ are the elements of the set $\mathcal{F}(1,2)$. Now we have all the ``tools'' accesible in $\mathcal{CD}'$. Finally, the relation (\ref{baassppwe}) lets us ``convert'' the elements of $\mathcal{D}'$ and $\mathcal{CD}'$ back and forth. We are done.
 \end{proof}

\section{Table of notations}\label{jelolesj}

  \begin{longtable}{| l | p{2cm} | l | }

  \hline
 Notation & Definition, theorem, etc. & Page number \\ \hline
$\dot{\cup}$ & \ref{349587897819}  & \pageref{349587897819}  \\ \hline
$\sqsubset$, $ \sqsubseteq$, $\equiv$, $\equiv_G^C$ & \ref{7538761239874}  & \pageref{7538761239874}  \\ \hline

$(.)_{\mathcal{CD}}$ & \ref{723789811980012} & \pageref{723789811980012} \\ \hline
$\male_n$ & \ref{deffiugraf} & \pageref{deffiugraf} \\ \hline
$\male_n^L$ & \ref{deffiugraf} & \pageref{deffiugraf}  \\ \hline
$\male_{i,j}^L$ & \ref{13541235} & \pageref{13541235}  \\ \hline
$\male_i\to\male_j$ & \ref{748298471239874} & \pageref{748298471239874}  \\ \hline
$A$ & \ref{xcmvnyxjdaskdqiwo} & \pageref{xcmvnyxjdaskdqiwo} \\ \hline
${\bf A}$ &  & \pageref{xcvnsbndasiiwq}  \\ \hline
$(A,\alpha,B)$ &  & \pageref{yxcmmnbcvn16235} \\ \hline 
$\mathcal{CD}$ &  & \pageref{qoiue2833} \\ \hline 
$\mathcal{CD}'$ & \ref{xmcvnbsjdiqawe} & \pageref{xmcvnbsjdiqawe} \\ \hline 
$\mathcal{D}$ &  & \pageref{xcvshgw8} \\ \hline
$\mathcal{D}'$ & \ref{xcmvnyxjdaskdqiwo} & \pageref{xcmvnyxjdaskdqiwo} \\ \hline 
$\mathfrak{E}$ & \ref{xmvbyxhu8231} & \pageref{xmvbyxhu8231} \\ \hline 
$\mathfrak{E}_+$ & \ref{238971875642} & \pageref{238971875642} \\ \hline 
$E(G)$ & & \pageref{qweoweru3} \\ \hline
${\bf E}_1$ &\ref{xmcvnbsjdiqawe} & \pageref{xmcvnbsjdiqawe} \\ \hline
$E_n$ & \ref{defE_n} & \pageref{defE_n} \\ \hline
$F_{\alpha}(n,m)$ & \ref{75387671982738788} & \pageref{75387671982738788} \\ \hline
$\mathcal{F}(n,m)$ & \ref{75387671982738788} & \pageref{75387671982738788} \\ \hline
$E_n$ & \ref{defE_n} & \pageref{defE_n} \\ \hline
${\bf f}_1$, ${\bf f}_2$ & \ref{xmcvnbsjdiqawe} & \pageref{xmcvnbsjdiqawe} \\ \hline
$F_n$ & \ref{defE_n} & \pageref{defE_n} \\ \hline
$G^T$ &  & \pageref{aqweiqportw8} \\ \hline
$G\overset{\underline{v}}{\leftarrow} O_n^*$ & \ref{iuyxcuieiqiufjsf} & \pageref{iuyxcuieiqiufjsf} \\ \hline
$\text{hom}(A, B)$ &  & \pageref{1382721} \\ \hline
${\bf I}_2$ & \ref{xmcvnbsjdiqawe} & \pageref{xmcvnbsjdiqawe} \\ \hline
$I_n$ & \ref{defIOL} & \pageref{defIOL} \\ \hline
$\mathfrak{L}$ & \ref{xmvbyxhu8231} & \pageref{xmvbyxhu8231} \\ \hline
$L_n$ & \ref{defIOL} & \pageref{defIOL}  \\ \hline
$L(.)$ & \ref{defLfuggveny} & \pageref{defLfuggveny}  \\ \hline
$\mathcal{L}(.)$ & \ref{defLfuggveny} & \pageref{defLfuggveny}  \\ \hline
$\mathfrak{M}$ & \ref{ppghhfjasd} & \pageref{ppghhfjasd}  \\ \hline
$M(.)$ & \ref{defMfuggveny} & \pageref{defMfuggveny}  \\ \hline
$\mathcal{M}(.)$ & \ref{defMfuggveny} & \pageref{defMfuggveny}  \\ \hline
$\mathcal{O}$ & \ref{123897834611} & \pageref{123897834611}  \\ \hline
$\mathcal{O}_{\cup}$ & \ref{ppghhfjasd} & \pageref{ppghhfjasd}  \\ \hline
$O_n$ & \ref{defIOL} & \pageref{defIOL}  \\ \hline
$\mathcal{O}_n^{\to}$ & \ref{defOnto} & \pageref{defOnto}  \\ \hline
$O_n^*$ & \ref{983489887772543214} & \pageref{983489887772543214}  \\ \hline
$O_{n,L}^{*}$ & \ref{apoowjsbc} & \pageref{apoowjsbc}  \\ \hline
$O_{n,L}$ & \ref{defonl} & \pageref{defonl}  \\ \hline
$O_{i,i}$ & \ref{74678719187892643} & \pageref{74678719187892643}  \\ \hline
$O_{i \to j}$ & \ref{756728767198470} & \pageref{756728767198470}  \\ \hline
$\text{ob}(\mathcal{CD})$ &  & \pageref{dfjhawkd}  \\ \hline
$V(G)$ &  & \pageref{qwqepwrpdg}  \\ \hline
$\mathcal{W}$ & \ref{7358712837283791}  & \pageref{7358712837283791}  \\ \hline
  \end{longtable}

{\it Acknowledgements.} The results of this paper were born in an MSc thesis. The author thanks Mikl\'{o}s Mar\'{o}ti, who as his supervisor gave him this research topic.

\bibliographystyle{spmpsci}      
\bibliography{definability_II}

\end{document}